\documentclass[11pt]{article}
\usepackage [dvips]{graphics}
\usepackage[centertags]{amsmath}
\usepackage{amsfonts}
\usepackage{amssymb}
\usepackage{makeidx}
\usepackage{bm}
\usepackage{newlfont}
\usepackage{amsthm} %add
\usepackage{stmaryrd}
\usepackage[]{titletoc}  % from here
\usepackage{mathrsfs}
\usepackage{stmaryrd}
 \usepackage{graphicx}  %%add
 \usepackage{xypic}
\usepackage{hyperref}
\usepackage[paperwidth=19.5cm,paperheight=27cm,top=2.5cm,bottom=2.5cm,left=2.3cm,right=2.3cm]{geometry}

%\usepackage{fullpage}
% Fuzz -------------------------------------------------------------------
\hfuzz2pt % Don't bother to report over-full boxes if over-edge is < 2pt
% Line spacing -----------------------------------------------------------
\newlength{\defbaselineskip}
\setlength{\defbaselineskip}{\baselineskip}
\newcommand{\setlinespacing}[1]%
           {\setlength{\baselineskip}{#1 \defbaselineskip}}

% THEOREMS ---------------------------------------------------------------
\theoremstyle{plain}
\newtheorem{thm}{Theorem}[section]
\newtheorem{cor}[thm]{Corollary}
\newtheorem{lem}[thm]{Lemma}
\newtheorem{prop}[thm]{Proposition}

\newtheorem{mainthm}{Theorem}

\makeatletter\@addtoreset{equation}{section} \makeatother
\setcounter{page}{1}

\begin{document}

\title {Littlewood-type theorems for Hardy spaces in infinitely many variables}
\author{Jiaqi Ni}
\date{}
\maketitle

\noindent\textbf{Abstract:} Littlewood's theorem is one of the pioneering results in random analytic functions over the open unit disk.
In this paper, we prove some analogues of this theorem for Hardy spaces in infinitely many variables.
Our results not only cover finite-variable setting, but also apply in cases of Dirichlet series.

\vskip 0.1in \noindent \emph{Keywords:} Random analytic function, Littlewood's theorem, Hardy space, infinitely many variables, Dirichlet series.

\vskip 0.1in \noindent \emph{MSC (2010):}
Primary 46E50; Secondary 30B50; 32A35.

\section{Introduction}

We begin with a well known fact: The random series $\sum_{n=1}^\infty\pm\frac{1}{n^p}$ converges almost surely if and only if $p>\frac{1}{2}$.
This suggests that randomization of series may enjoy improved regularity.
We now turn to the setting of random analytic functions.
Throughout this paper, we assume that all random variables and random vectors are defined on a probability space $(\Omega,\mathcal{F},\mathbb{P})$.
Let $\{X_n\}_{n=0}^\infty$ be a sequence of random variables.
Given an analytic function $f(z)=\sum_{n=0}^\infty a_n z^n$ over the open unit disk $\mathbb{D}$, its randomization with respect to this sequence, denoted by $\mathcal{R}f$, is defined to be the following power series:
$$(\mathcal{R}f)(z)=\sum_{n=0}^\infty a_n X_n z^n.$$
Let $H^p(\mathbb{D})$ denote the Hardy space over $\mathbb{D}$.
The following theorem obtained by Littlewood \cite{Li2} is a milestone in the study of random analytic functions.

\begin{mainthm}[Littlewood's theorem, \cite{Li2}]\label{thm:mainA}
Let $\{X_n\}_{n=0}^\infty$ be a standard Bernoulli sequence, that is, a sequence of independent, identically distributed random variables with $\mathbb{P}(X_n=1)=\mathbb{P}(X_n=-1)=\frac{1}{2}$.
Suppose that $f$ is analytic on $\mathbb{D}$.

(1) If $f\in H^2(\mathbb{D})$, then for all $1\leq p<\infty$, $\mathcal{R}f\in H^p(\mathbb{D})$ almost surely;

(2) If $f\notin H^2(\mathbb{D})$, then for all $1\leq p<\infty$, $\mathcal{R}f\notin H^p(\mathbb{D})$ almost surely.
\end{mainthm}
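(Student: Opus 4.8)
The plan is to run both parts on the same two engines. The first is \emph{Khintchine's inequality}: for an i.i.d.\ Bernoulli sequence $\{X_n\}$ and scalars $\{c_n\}$ there are universal constants $0<A_p\le B_p<\infty$ with
$$A_p\Big(\sum_n|c_n|^2\Big)^{1/2}\le\Big(\mathbb{E}\,\Big|\sum_n X_nc_n\Big|^p\Big)^{1/p}\le B_p\Big(\sum_n|c_n|^2\Big)^{1/2},$$
valid for complex scalars and, by $L^2(\Omega)$-convergence of the partial sums, for convergent infinite sums. The second is \emph{Kolmogorov's zero--one law}: since altering finitely many signs $X_n$ changes $\mathcal{R}f$ by a polynomial, hence by an element of every $H^p(\mathbb{D})$, the event $\{\mathcal{R}f\in H^p(\mathbb{D})\}$ lies in the tail $\sigma$-algebra of $\{X_n\}$ and so has probability $0$ or $1$. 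First I would record two elementary points: because $f$ is analytic on $\mathbb{D}$ we have $\limsup_n|a_n|^{1/n}\le 1$ and $|a_nX_n|=|a_n|$, so $\mathcal{R}f$ is (surely) analytic on $\mathbb{D}$; and, writing $M_p(r)=\big(\int_0^{2\pi}|\mathcal{R}f(re^{i\theta})|^p\,\tfrac{d\theta}{2\pi}\big)^{1/p}$, Hardy's convexity theorem makes $r\mapsto M_p(r)$ nondecreasing, with $\|\mathcal{R}f\|_{H^p(\mathbb{D})}=\sup_{r<1}M_p(r)=\lim_{r\to1^-}M_p(r)$.

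For part (1), fix $p\in[1,\infty)$ and put $S=\sum_n|a_n|^2=\|f\|_{H^2(\mathbb{D})}^2<\infty$. Applying the upper Khintchine bound with $c_n=a_nr^ne^{in\theta}$ and then integrating in $\theta$ (Tonelli) gives $\mathbb{E}\,M_p(r)^p\le B_p^p\big(\sum_n|a_n|^2r^{2n}\big)^{p/2}\le B_p^pS^{p/2}$ for every $r<1$. Since $r\mapsto M_p(r)^p$ is nondecreasing, monotone convergence yields $\mathbb{E}\,\|\mathcal{R}f\|_{H^p(\mathbb{D})}^p=\lim_{r\to1^-}\mathbb{E}\,M_p(r)^p\le B_p^pS^{p/2}<\infty$, so $\mathcal{R}f\in H^p(\mathbb{D})$ almost surely. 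Intersecting these almost sure events over $p\in\mathbb{N}$ and using $H^q(\mathbb{D})\subseteq H^p(\mathbb{D})$ for $q\ge p$ handles all real $p\in[1,\infty)$ at once.

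For part (2), suppose $f\notin H^2(\mathbb{D})$, so $S_r:=\sum_n|a_n|^2r^{2n}$ is finite for each $r<1$ but $S_r\to\infty$ as $r\to1^-$. As $H^p(\mathbb{D})\subseteq H^1(\mathbb{D})$ for $p\ge 1$, it suffices to prove $\mathcal{R}f\notin H^1(\mathbb{D})$ almost surely. Let $Z_r=M_1(r)$. The lower Khintchine bound with $p=1$ and Tonelli give $\mathbb{E}\,Z_r\ge A_1S_r^{1/2}$, while Cauchy--Schwarz on $\mathbb{T}$ gives $Z_r^2\le\int_0^{2\pi}|\mathcal{R}f(re^{i\theta})|^2\,\tfrac{d\theta}{2\pi}$, whose expectation equals $S_r$ by orthogonality of $\{e^{in\theta}\}$ and $|X_n|=1$; hence $\mathbb{E}\,Z_r^2\le S_r\le A_1^{-2}(\mathbb{E}\,Z_r)^2$. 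The Paley--Zygmund inequality (with parameter $\tfrac12$) then produces $\delta:=\tfrac14A_1^2>0$, independent of $r$, with $\mathbb{P}\big(Z_r>\tfrac12A_1S_r^{1/2}\big)\ge\delta$. Given $M>0$, pick $r$ with $\tfrac12A_1S_r^{1/2}>M$ to get $\mathbb{P}\big(\|\mathcal{R}f\|_{H^1(\mathbb{D})}>M\big)\ge\mathbb{P}(Z_r>M)\ge\delta$; letting $M\to\infty$ through $\mathbb{N}$ and using continuity of $\mathbb{P}$ from above gives $\mathbb{P}\big(\|\mathcal{R}f\|_{H^1(\mathbb{D})}=\infty\big)\ge\delta>0$, so the zero--one law forces this probability to be $1$.

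I expect the only genuine obstacle to be the step in part (2) that upgrades divergence of the \emph{mean} $L^1$-norm at radius $r$ to an almost sure statement: a bare first-moment bound $\mathbb{E}\,Z_r\to\infty$ is not enough, and this is exactly what the second-moment (Paley--Zygmund) estimate fixes, by furnishing a positive probability of largeness that does not deteriorate as $r\to1^-$, after which the tail-event zero--one law finishes. Part (1) is essentially bookkeeping, the one point needing care being the interchange of the expectation with the supremum over $r$ defining $\|\cdot\|_{H^p(\mathbb{D})}$, which the monotonicity of integral means and monotone convergence handle. Khintchine's inequality (including its passage from finite to convergent infinite sums) is used as a black box throughout.
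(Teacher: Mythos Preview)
The paper does not prove Theorem~A; it is quoted as Littlewood's classical result from \cite{Li2} and used only as background. Your proposal is a correct, self-contained proof of the one-variable statement.

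It is nonetheless instructive to compare your argument with the paper's proof of its own infinite-variable analogue, Theorem~\ref{thm0201}. For part~(1) the two are essentially the same: Khintchine (resp.\ Khintchine--Kahane) together with Fubini and monotone convergence gives $\mathbb{E}\|\mathcal{R}F\|_p^p\sim(\sum_n|a_n|^2)^{p/2}$, and finiteness of the right side yields a.s.\ membership in $H^p$. For part~(2) the routes diverge. The paper invokes Proposition~\ref{thm0203}, which rests on the Marcinkiewicz--Zygmund--Kahane theorem and Fernique's exponential-moment theorem for symmetric random series in Banach spaces: a.s.\ finiteness of $\|\mathcal{R}F\|_p$ would force $\mathbb{E}\|\mathcal{R}F\|_p^p<\infty$, contradicting the Khintchine computation, after which the zero--one law finishes. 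You instead run the classical Paley--Zygmund second-moment argument directly on $M_1(r)$, producing a uniform-in-$r$ lower bound $\mathbb{P}\big(M_1(r)>\tfrac12 A_1 S_r^{1/2}\big)\ge\tfrac14 A_1^2$ and then letting $S_r\to\infty$. Your route is more elementary and closer to the original Littlewood--Paley--Zygmund circle of ideas; the paper's route is heavier machinery but packages the equivalence ``$\|\mathcal{R}F\|_p<\infty$ a.s.\ $\Leftrightarrow$ $\mathbb{E}\|\mathcal{R}F\|_p^\beta<\infty$ for some/all $\beta$'' in a form that transports unchanged to the infinite-polydisk setting.
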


\noindent It is worth mentioning that this theorem remains valid for standard Steinhaus sequences \cite{Li1, PZ} and standard Gaussian $N(0,1)$ sequences \cite{Kah, PWZ}.

Over the past century, analogues of Littlewood's theorem for other familiar function spaces have received attention from many researchers.
For the case of standard Bernoulli sequences, Paley and Zygmund \cite{PZ} prove that if $\{a_n^2\log^\delta n\}_{n=1}^\infty\in\ell^1$ for some $\delta>1$, then $\mathcal{R}f$ lies in the disk algebra $A(\mathbb{D})$ almost surely, where $\ell^1$ denotes the set of all summable sequences.
However, this fails for $\delta=1$.
When $\delta=1$, Sledd \cite{Sle} shows that $\{a_n^2\log n\}_{n=1}^\infty\in\ell^1$ implies $\mathcal{R}f\in\mathrm{BMOA}$ almost surely.
A remarkable progress is obtained by Marcus and Pisier \cite{MP} in 1978, they completely characterize when $\mathcal{R}f\in H^\infty(\mathbb{D})$ almost surely via Dudley-Fernique theorem, also see \cite{Kah, MP2}.
Later, Gao \cite{Gao} gives a necessary and sufficient condition for $\mathcal{R}f$ represents a function in the Bloch space $\mathcal{B}$ almost surely.
Recently, Cheng, Fang and Liu \cite{CFL} obtain a characterization of the pairs $(p,q)$ such that $\mathcal{R}f\in A^q(\mathbb{D})$ almost surely whenever $f\in A^p(\mathbb{D})$, where $A^p(\mathbb{D})$ and $A^q(\mathbb{D})$ denote Bergman spaces over $\mathbb{D}$.
For more related works, we refer readers to \cite{ACP, Bi, Dur, SZ}.

As we mentioned previously, Littlewood's theorem (Theorem \ref{thm:mainA}) is still true for standard Gaussian $N(0,1)$ sequences.
Lately, a version of this theorem associated with not necessarily independent Gaussian processes is proved by Cheng, Fang, Guo and Liu \cite{CFGL}.
Let $(\mathcal{X},\|\cdot\|)$ be a Banach space.
For $1\leq p<\infty$, the space $L^p(\Omega,\mathcal{X})$ is defined to be the collection of random vectors $X: \Omega\rightarrow\mathcal{X}$ for which
$$\|X\|_{L^p(\Omega,\mathcal{X})}^p=\int_{\Omega}\|X\|^p\mathrm{d}\mathbb{P}<\infty.$$
We record the their main result as follows.

\begin{mainthm}[A Gaussian version of Littlewood's theorem, \cite{CFGL}]\label{thm:mainB}
Suppose $1\leq p<\infty$.
Let $\{X_n\}_{n=0}^\infty$ be a centered real Gaussian process.

(1) If the covariance matrix $\mathbf{K}=\left(\mathbb{E}(X_m X_n)\right)_{m,n\geq0}$ is bounded on $\ell^2$, the Hilbert space of all square-summable sequences, then $\mathcal{R}$ defines a bounded linear operator from $H^2(\mathbb{D})$ to $L^2(\Omega,H^p(\mathbb{D}))$. In particular, if $f\in H^2(\mathbb{D})$, then $\mathcal{R}f\in H^p(\mathbb{D})$ almost surely.

(2) If for all $f\in H^2(\mathbb{D})$, $\mathcal{R}f\in H^p(\mathbb{D})$ almost surely, then $\mathcal{R}$ defines a bounded linear operator from $H^2(\mathbb{D})$ to $L^2(\Omega,H^p(\mathbb{D}))$.
\end{mainthm}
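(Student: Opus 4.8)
The plan is to prove the two parts by quite different means: part (1) by a direct second-moment computation, exploiting that the variance of every linear functional of the Gaussian process is controlled by $\|\mathbf{K}\|$; part (2) by a soft closed-graph argument combined with the integrability of Banach-space-valued Gaussian vectors (Fernique's theorem). For part (1) fix $f=\sum_n a_nz^n\in H^2(\mathbb{D})$ and work with the polynomial partial sums $S_Nf=\sum_{n\le N}a_nX_nz^n$, which are $H^p(\mathbb{D})$-valued Gaussian vectors. The key pointwise fact is that, for $0<r<1$ and $\theta\in[0,2\pi)$, $S_Nf(re^{i\theta})=\sum_{n\le N}a_nr^ne^{in\theta}X_n$ is a centered complex Gaussian with $\mathbb{E}|S_Nf(re^{i\theta})|^2=\langle\mathbf{K}u,u\rangle\le\|\mathbf{K}\|\,\|u\|_{\ell^2}^2\le\|\mathbf{K}\|\,\|f\|_{H^2}^2$, where $u=(a_nr^ne^{in\theta})_{n\le N}$. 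If $1\le p\le2$ I simply use $\|F\|_{H^p}\le\|F\|_{H^2}$: then $\mathbb{E}\|S_Nf-S_Mf\|_{H^p}^2\le\mathbb{E}\|S_Nf-S_Mf\|_{H^2}^2=\sum_{M<n\le N}|a_n|^2\mathbf{K}_{nn}\le\|\mathbf{K}\|\sum_{M<n\le N}|a_n|^2\to0$, so $(S_Nf)_N$ converges in $L^2(\Omega,H^2(\mathbb{D}))$ to a limit with Taylor coefficients $a_nX_n$, that is, to $\mathcal{R}f$, and $\mathbb{E}\|\mathcal{R}f\|_{H^p}^2\le\|\mathbf{K}\|\,\|f\|_{H^2}^2$. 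If $2<p<\infty$ I use the elementary moment comparison $\mathbb{E}|W|^p\le c_p(\mathbb{E}|W|^2)^{p/2}$, valid for every complex Gaussian $W$; applying it for each $\theta$, then Tonelli, then monotone convergence as $r\uparrow1$ (subharmonicity of $|S_Nf|^p$), gives $\mathbb{E}\|S_Nf\|_{H^p}^p\le c_p(\|\mathbf{K}\|\,\|f\|_{H^2}^2)^{p/2}$ and likewise $\mathbb{E}\|S_Nf-S_Mf\|_{H^p}^p\le c_p(\|\mathbf{K}\|\sum_{M<n\le N}|a_n|^2)^{p/2}\to0$; hence $(S_Nf)_N$ converges in $L^p(\Omega,H^p(\mathbb{D}))$ to $\mathcal{R}f$, and since $p>2$ forces $\|\cdot\|_{L^2(\Omega)}\le\|\cdot\|_{L^p(\Omega)}$ on the probability space, $\mathbb{E}\|\mathcal{R}f\|_{H^p}^2\le c_p^{2/p}\|\mathbf{K}\|\,\|f\|_{H^2}^2$. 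Linearity of $\mathcal{R}$ is clear, and the ``in particular'' clause is immediate from finiteness of $\mathbb{E}\|\mathcal{R}f\|_{H^p}^2$. The only step needing care is legitimizing the interchange of $\mathbb{E}$ with $\sup_{0<r<1}\tfrac1{2\pi}\int_0^{2\pi}|\cdot|^p\,d\theta$, which is precisely why I run everything through polynomial partial sums (whose $H^p$ norm is literally the boundary integral) and monotone convergence in $r$.

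For part (2) assume $\mathcal{R}f\in H^p(\mathbb{D})$ almost surely for every $f\in H^2(\mathbb{D})$. First, $\mathcal{R}$ is a well-defined linear map from $H^2(\mathbb{D})$ into the $F$-space $L^0(\Omega,H^p(\mathbb{D}))$ of (classes of) measurable $H^p(\mathbb{D})$-valued maps with the topology of convergence in probability --- measurability of $\omega\mapsto\mathcal{R}f(\cdot,\omega)$ is routine, since its Taylor coefficients $a_nX_n$ are measurable and the coefficient functionals generate the Borel $\sigma$-algebra of the separable space $H^p(\mathbb{D})$. This map has a closed graph: if $f_k\to f$ in $H^2(\mathbb{D})$ and $\mathcal{R}f_k\to g$ in probability in $H^p(\mathbb{D})$, then the $n$-th Taylor coefficient of $\mathcal{R}f_k$ tends in probability both to $\widehat{f}(n)X_n$ and to $\widehat{g}(n)$, whence $g=\mathcal{R}f$. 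By the closed-graph theorem for $F$-spaces, $\mathcal{R}\colon H^2(\mathbb{D})\to L^0(\Omega,H^p(\mathbb{D}))$ is continuous, so the image of the unit ball is bounded in probability: there is $\lambda_0>0$ with $\mathbb{P}\big(\|\mathcal{R}f\|_{H^p}>\lambda_0\|f\|_{H^2}\big)\le\tfrac13$ for all $f\in H^2(\mathbb{D})$ (using homogeneity of $\mathcal{R}$).

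It remains to upgrade this quantile bound to an $L^2$ bound, and here the Gaussian structure is indispensable; this is also where I expect the main obstacle. For a \emph{possibly non-independent} process one must first know that $\mathcal{R}f$ is a genuine $H^p(\mathbb{D})$-valued Gaussian vector, not merely a function that lies a.s.\ in $H^p(\mathbb{D})$, and the direct route via $S_Nf\to\mathcal{R}f$ in $H^p(\mathbb{D})$ is unavailable. Instead I extract it by dilation: taking $f=z^n$ in the quantile bound gives $\mathbb{P}(|X_n|>\lambda_0)\le\tfrac13$, which since $X_n/\sqrt{\mathbf{K}_{nn}}$ is standard Gaussian forces $\sup_n\mathbf{K}_{nn}<\infty$, hence $\sup_n\mathbb{E}|X_n|<\infty$; then for $f\in H^2(\mathbb{D})$ and $0<r<1$ the dilated series $(\mathcal{R}f)_r=\sum_na_nr^nX_nz^n$ converges absolutely in $H^p(\mathbb{D})$ both a.s.\ and in $L^1(\Omega,H^p(\mathbb{D}))$ (because $\sum_n|a_n|r^n\,\mathbb{E}|X_n|<\infty$), so each $(\mathcal{R}f)_r$ is an $H^p(\mathbb{D})$-valued Gaussian vector; since $\mathcal{R}f\in H^p(\mathbb{D})$ a.s.\ its dilations $(\mathcal{R}f)_r$ converge to it in $H^p(\mathbb{D})$ as $r\uparrow1$, so $\mathcal{R}f$ is itself an $H^p(\mathbb{D})$-valued Gaussian vector. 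Finally, Fernique's theorem --- in the form that a Banach-space-valued Gaussian vector $Y$ with $\mathbb{P}(\|Y\|>u)\le\tfrac13$ satisfies $\mathbb{E}\|Y\|^2\le Cu^2$ for a universal constant $C$ --- applied to $Y=\mathcal{R}f$ with $u=\lambda_0\|f\|_{H^2}$ gives $\mathbb{E}\|\mathcal{R}f\|_{H^p}^2\le C\lambda_0^2\|f\|_{H^2}^2$, which is exactly the claimed boundedness of $\mathcal{R}\colon H^2(\mathbb{D})\to L^2(\Omega,H^p(\mathbb{D}))$. Thus the crux is securing the $H^p(\mathbb{D})$-Gaussianity of $\mathcal{R}f$ in the absence of independence, and the dilation device together with the bound $\sup_n\mathbf{K}_{nn}<\infty$ is what makes it work.
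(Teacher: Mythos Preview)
Your proposal is correct. Note first that the paper does not itself prove Theorem~B; it is quoted from \cite{CFGL}. The relevant comparison is therefore with the paper's proofs of the infinite-variable analogues, Theorems~\ref{thm0301} and~\ref{thm0303}, whose arguments specialize to the disk case.

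For part~(1), your approach and the paper's share the same core: control the pointwise variance of $(\mathcal{R}f)(re^{i\theta})$ by $\|\mathbf{K}\|\,\|f\|_{H^2}^2$, invoke the scalar Gaussian moment comparison, and integrate via Fubini. The packaging differs: you work with polynomial partial sums and split into the regimes $1\le p\le 2$ (using $H^2\subset H^p$) and $p>2$ (using $L^p(\Omega)\subset L^2(\Omega)$), thereby avoiding any appeal to the Banach-space Gaussian moment equivalence; the paper instead runs everything through dilations $\mathcal{R}_{[r]}F$ and invokes the vector-valued inequality~(\ref{equ301}) once to pass from $L^p(\Omega,H^p)$ to $L^2(\Omega,H^p)$, which handles all $p$ uniformly at the cost of a slightly heavier tool.

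For part~(2) the routes are genuinely different. The paper (Theorem~\ref{thm0303}) applies the uniform boundedness principle to the family $\{\mathcal{R}_{[r]}\}_{0<r<1}$: it first shows each $\mathcal{R}_{[r]}\colon H^2\to L^2(\Omega,H^p)$ is bounded (via an estimate like Lemma~\ref{thm0304} on $\sum_n\sigma_n^2 r^{2n}$), then shows $\mathcal{R}_{[r]}F\to\mathcal{R}F$ in $L^2(\Omega,H^p)$ for each $F$ using that $(\mathcal{R}F)^*$ is Gaussian in $L^p$ together with Fernique-type integrability. You instead use the closed-graph theorem for $F$-spaces to obtain continuity $H^2\to L^0(\Omega,H^p)$, extract a uniform quantile bound $\mathbb{P}(\|\mathcal{R}f\|_{H^p}>\lambda_0\|f\|_{H^2})\le\tfrac13$, deduce $\sup_n\mathbf{K}_{nn}<\infty$ by testing on $f=z^n$, use this to show via dilations that $\mathcal{R}f$ is an $H^p$-valued Gaussian vector, and finally upgrade the quantile bound to an $L^2$ bound by the quantitative form of Fernique's theorem. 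Your route is arguably more direct: the bound $\sup_n\mathbf{K}_{nn}<\infty$ falls out immediately from the quantile estimate, whereas the paper's analogue (Lemma~\ref{thm0304}) requires a separate Borel--Cantelli argument. On the other hand, the paper's UBP scheme makes the role of the dilation operators structurally transparent and generalizes cleanly to the infinite-polydisk setting.
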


We now turn to main objects of our paper.
Let $\mathbb{D}^\infty=\mathbb{D}\times\mathbb{D}\times\cdots$ be the cartesian product of countably infinitely many open unit disks and $\mathbb{D}_1^\infty=\mathbb{D}^\infty\cap\ell^1$ a domain in the Banach space $\ell^1$.
For every $1\leq p<\infty$, the Hardy space $\mathbf{H}_\infty^p$ over $\mathbb{D}_1^\infty$ is defined as
$$\mathbf{H}_\infty^p=\left\{F\;\text{is analytic on}\;\mathbb{D}_1^\infty: \|F\|_p^p=\sup_{0<r<1}\int_{\mathbb{T}^\infty}|F_{[r]}|^p\mathrm{d}m_\infty<\infty\right\},$$
where $F_{[r]}(w)=F(rw_1,\ldots,r^{n}w_n,\ldots)$, $w\in\mathbb{T}^\infty$, and $\mathrm{d}m_\infty=\frac{\mathrm{d}\theta_1}{2\pi}\times\frac{\mathrm{d}\theta_2}{2\pi}\times\cdots$ denotes the Haar measure of the compact group $\mathbb{T}^\infty=\mathbb{T}\times\mathbb{T}\times\cdots$, which is the cartesian product of countably infinitely many unit circles $\mathbb{T}$.
Just like finite-variable setting, one can also define Hardy spaces over the infinite torus $\mathbb{T}^\infty$.
By a polynomial we mean that it is an analytic polynomial only depending on finitely many complex variables.
The Hardy space $H^p(\mathbb{T}^\infty)$, is defined to be the closure of $\mathcal{P}_\infty$ in $L^p(\mathbb{T}^\infty)$, where $\mathcal{P}_\infty$ denotes the ring consisting of all polynomials.
In their paper, Aleman, Olsen and Saksman \cite{AOS} shows that every function $F\in\mathbf{H}_\infty^p$ has the radial limit $F^{*}(w)=\lim_{r\rightarrow1}F_{[r]}(w)$ for almost every $w\in\mathbb{T}^\infty$.
Furthermore, the map $F\mapsto F^{*}$ defines an isometric isomorphism from $\mathbf{H}_\infty^p$ onto $H^p(\mathbb{T}^\infty)$, see \cite{AOS, BBSS, CG, DG}.
For some other recent works on Hardy spaces in infinitely many variables, we refer readers to \cite{BDFMS, DGH, GY, KQSS, Ni}.

It is known that each analytic function on the polydisk $\mathbb{D}^n$ can be represented by a power series.
We now turn to the infinite-variable setting.
Let $\mathbb{N}=\{1,2,\ldots\}$ be the set of positive integers and $p_j$ the $j$-th prime number.
With each $n\in\mathbb{N}$ is associated a unique prime factorization $n=p_{1}^{\alpha_1}\cdots p_{k}^{\alpha_k}$, and set $\alpha(n)=(\alpha_1,\ldots,\alpha_k,0,\ldots)$.
For a sequence of complex numbers $\zeta=(\zeta_1,\zeta_2,\ldots)$, write $\zeta^{\alpha(n)}=\zeta_{1}^{\alpha_1}\cdots\zeta_{k}^{\alpha_k}$.
In their paper \cite{DMP}, Defant, Maestre and Prengel prove that each analytic function $F$ on $\mathbb{D}_1^\infty$ has a unique monomial expansion
$$F(\zeta)=\sum_{n=1}^\infty a_n\zeta^{\alpha(n)},\quad\zeta\in\mathbb{D}_1^\infty,$$
which converges uniformly and absolutely on compact subsets of $\mathbb{D}_1^\infty$.
Furthermore, if $F\in\mathbf{H}_\infty^p$, then the series above converges in $\mathbb{D}_2^\infty=\mathbb{D}^\infty\cap\ell^2$ \cite{BDFMS}, and hence our definition for $\mathbf{H}_\infty^p$ coincides with that in \cite{DG}.
Let $\bm{X}=\{X_n\}_{n=1}^\infty$ be a sequence of random variables.
For each analytic function $F$ on $\mathbb{D}_1^\infty$, its randomization $\mathcal{R}_{\bm{X}}F$ with respect to $\bm{X}$ is defined to be the series as follows:
\begin{equation}\label{equ101}
(\mathcal{R}_{\bm{X}}F)(\zeta)=\sum_{n=1}^\infty a_n X_n \zeta^{\alpha(n)},
\end{equation}
and we will write it simply $\mathcal{R}F$ when no confusion can arise.
Such randomization will be considered in our paper.

One of the reasons why we study analytic functions in infinitely many variables is that they are closely related to Dirichlet series, a key object in analytic number theory.
Let $1\leq p<\infty$ and $\mathcal{P}_D$ denote the set of all Dirichlet polynomials $Q(s)=\sum_{n=1}^N a_n n^{-s}$.
For each $Q\in\mathcal{P}_D$, $t\mapsto|Q(it)|^p$ is almost periodic, and hence the limit
$$\|Q\|_{p}^{p}=\lim_{T\rightarrow\infty}\frac{1}{2T}\int_{-T}^{T}|Q(it)|^p\mathrm{d}t$$
exists, see \cite{Bes}, or \cite[Theorem 1.5.6]{QQ}.
The Hardy-Dirichlet space $\mathcal{H}^p$ is defined to be the completion of $\mathcal{P}_D$ in the norm $\|\cdot\|_p$ \cite{Bay}.
From Bohr's point of view \cite{Boh}, each $Q\in\mathcal{P}_D$ can be associated with a polynomial as follows:
$$(\mathcal{B}Q)(\zeta)=\sum_{n=1}^{N}a_n\zeta^{\alpha(n)}.$$
Furthermore, it follows from Birkhoff-Oxtoby theorem \cite[Theorem 6.5.1]{QQ} that $\|Q\|_{p}^{p}=\int_{\mathbb{T}^{\infty}}|\mathcal{B}Q|^p \mathrm{d}m_\infty$.
This shows that the Bohr correspondence $\mathcal{B}: \mathcal{P}_D\rightarrow\mathcal{P}_\infty$ can be extended to an isometric isomorphism from $\mathcal{H}^p$ onto $H^p(\mathbb{T}^\infty)$, which is a natural connection between these two spaces.
Here it is worth mentioning that Konyagin, Queff\'{e}lec, Saksman and Seip \cite{KQSS} give a Littlewood-type theorem for the space of Dirichlet series in $\mathrm{BMOA}$ recently.

The purpose of this paper is to present infinite-variable versions of Theorem \ref{thm:mainA} and Theorem \ref{thm:mainB}.
We offer a unifying treatment in both finite-variable and infinite-variable setting.
The main results are Theorem \ref{thm0201}, Theorem \ref{thm0301} and Theorem \ref{thm0303}.
Although our results are formally consistent with that in one-variable setting, we overcome some essential difficulties.
For example, the convergence of series in infinitely many variables is much more complicated than that in one-variable setting.

This paper is organized as follows.
Section 2 is dedicated to Littlewood-type theorems for infinite-variable random analytic functions associated with standard sequences, including Bernoulli sequences, Steinhaus sequences and Gaussian $N(0,1)$ sequences.
We prove an infinite-variable version of Theorem \ref{thm:mainA}.
In Section 3, we concern with Gaussian versions of Littlewood's theorem in infinite-variable setting, which generalize Theorem \ref{thm:mainB}.
Moreover, in order to deal with cases of noncentered Gaussian processes, we also study coefficient multipliers of Hardy spaces in infinitely many variables.
In Section 4, we give a brief introduction of the relationship between the Hardy-Dirichlet space $\mathcal{H}^p$ and the Hardy space $\mathbf{H}_\infty^p$.
Combining the Bohr correspondence with results in previous sections, we also present some Littlewood-type theorems for Hardy-Dirichlet spaces $\mathcal{H}^p$.

\section{Littlewood-type theorems for standard random sequences}

This section will mainly deal with Littlewood-type theorems for infinite-variable random analytic functions associated with three types of standard random sequences.
Let $\bm{X}=\{X_n\}_{n=1}^\infty$ be a sequence of independent, identically distributed random variables.
We say that: (i) $\bm{X}$ is a standard Bernoulli sequence, if $\mathbb{P}(X_n=1)=\mathbb{P}(X_n=-1)=\frac{1}{2}$ for all $n$; (ii) $\bm{X}$ is a standard Steinhaus sequence, if each $X_n$ is uniformly distributed on $\mathbb{T}$; (iii) $\bm{X}$ is a standard Gaussian $N(0,1)$ sequence, if all of $X_n$ are real Gaussian variables with zero mean and unit variance.
By a standard random sequence we mean that it is either a standard Bernoulli, Steinhaus, or Gaussian $N(0,1)$ sequence, see \cite{CFL}.
In the rest of this paper, we briefly denote ``almost surely" by ``a.s.".

Recall that a complex-valued function $F$ defined on the domain $\mathbb{D}_1^\infty\subset\ell^1$ is said to be analytic, if $F$ is locally bounded, and for each $\zeta\in\mathbb{D}_1^\infty$ and $\eta\in\ell^1$, the function $F(\zeta+z\eta)$ is analytic in parameter $z$ for $\zeta+z\eta\in\mathbb{D}_1^\infty$.
One can easily check that analytic functions are continuous \cite{Din}.

Given a standard random sequence $\bm{X}$ and an analytic function
\begin{equation}\label{equ200}
F(\zeta)=\sum_{n=1}^\infty a_n\zeta^{\alpha(n)},\quad\zeta\in\mathbb{D}_1^\infty
\end{equation}
on $\mathbb{D}_1^\infty$, we have defined the randomization $\mathcal{R}F$ with respect to $\bm{X}$ in (\ref{equ101}).
Our main result of this section is stated as follows.

\begin{thm}\label{thm0201}
Let $\bm{X}$ be a standard random sequence and $F$ an analytic function on $\mathbb{D}_1^\infty$.

(1) If $F\in\mathbf{H}_\infty^2$, then for all $1\leq p<\infty$, $\mathcal{R}F\in\mathbf{H}_\infty^p$ a.s.;

(2) If $F\notin\mathbf{H}_\infty^2$, then for all $1\leq p<\infty$, $\mathcal{R}F\notin\mathbf{H}_\infty^p$ a.s..
\end{thm}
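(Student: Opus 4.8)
The plan is to mirror the one-variable proof of Littlewood's theorem, using the Bohr-type identification $\mathbf{H}_\infty^p \cong H^p(\mathbb{T}^\infty)$ to transfer everything to the infinite torus, where Khinchin–Kahane-type inequalities and Fubini are available. For part (1), suppose $F\in\mathbf{H}_\infty^2$, so $\sum_n |a_n|^2 < \infty$ by the isometry $\mathbf{H}_\infty^2 \cong H^2(\mathbb{T}^\infty) \cong \ell^2$. Fix $1\le p<\infty$; without loss of generality take $p\ge 2$ (the scale of $\mathbf{H}_\infty^p$-norms is increasing in $p$ on $\mathbb{T}^\infty$ since $m_\infty$ is a probability measure). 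I would estimate
\begin{equation*}
\mathbb{E}\,\|\mathcal{R}F\|_p^p = \mathbb{E}\int_{\mathbb{T}^\infty}\Bigl|\sum_n a_n X_n w^{\alpha(n)}\Bigr|^p\,\mathrm{d}m_\infty(w),
\end{equation*}
interchange $\mathbb{E}$ and $\int_{\mathbb{T}^\infty}$ by Tonelli, and for each fixed $w$ apply the Khinchin inequality (Bernoulli case), its Steinhaus analogue, or the Gaussian moment identity to get $\mathbb{E}|\sum_n a_n X_n w^{\alpha(n)}|^p \le C_p \bigl(\sum_n |a_n|^2 |w^{\alpha(n)}|^2\bigr)^{p/2} = C_p \bigl(\sum_n |a_n|^2\bigr)^{p/2}$ since $|w^{\alpha(n)}|=1$ on $\mathbb{T}^\infty$. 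Hence $\mathbb{E}\,\|\mathcal{R}F\|_p^p \le C_p \|F\|_2^p < \infty$, so $\|\mathcal{R}F\|_p < \infty$ a.s., i.e. $\mathcal{R}F\in\mathbf{H}_\infty^p$ a.s. A point needing care: one must first know that $\sum_n a_n X_n \zeta^{\alpha(n)}$ defines an analytic function on $\mathbb{D}_1^\infty$ for a.e.\ $\omega$; this follows because almost surely $\{a_n X_n\}\in\ell^2$ (indeed $=\{|a_n|\}$ in modulus for Bernoulli/Steinhaus), and by the results of \cite{BDFMS, DMP} quoted in the introduction a sequence in $\ell^2$ is the coefficient sequence of an $\mathbf{H}_\infty^2$ function; alternatively the finiteness of $\mathbb{E}\|\cdot\|_p^p$ already forces $\mathcal{R}F$ into $\mathbf{H}_\infty^p$, which is a space of analytic functions.

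For part (2), suppose $F\notin\mathbf{H}_\infty^2$, i.e.\ $\sum_n |a_n|^2 = \infty$. The goal is the zero–one type conclusion $\mathcal{R}F\notin\mathbf{H}_\infty^p$ a.s. It suffices to treat $p=1$, since $\mathbf{H}_\infty^1 \supset \mathbf{H}_\infty^p$ for $p\ge 1$ (again $m_\infty$ is a probability measure), so $\mathcal{R}F\notin\mathbf{H}_\infty^1$ a.s.\ implies $\mathcal{R}F\notin\mathbf{H}_\infty^p$ a.s.\ for all $p\ge 1$. The standard route: if $\mathcal{R}F$ were in $\mathbf{H}_\infty^1$ with positive probability, then by a Kolmogorov $0$–$1$ law argument (the tail events are invariant under the i.i.d.\ sequence, and scaling $F$ by truncating finitely many coefficients does not change membership), it would be in $\mathbf{H}_\infty^1$ a.s.; then one derives a contradiction with $\sum|a_n|^2=\infty$. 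The classical mechanism is: for $G\in H^1(\mathbb{T}^\infty)$ the $H^2$-norm of its partial sums can blow up, but a lower bound on $\int_{\mathbb{T}^\infty}|\sum a_n X_n w^{\alpha(n)}|\,\mathrm{d}m_\infty$ in terms of $\bigl(\sum |a_n|^2\bigr)^{1/2}$ via the lower Khinchin inequality, applied after conditioning, forces $\mathbb{E}\,\|\mathcal{R}F\|_1 \gtrsim$ (partial-sum $\ell^2$ norms) $\to\infty$; combined with the $0$–$1$ law and monotone convergence on truncations $F_N(\zeta)=\sum_{n\le N} a_n\zeta^{\alpha(n)}$, one concludes $\|\mathcal{R}F\|_1 = \infty$ a.s. I would carry this out by first proving the dichotomy ``$\mathbb{P}(\mathcal{R}F\in\mathbf{H}_\infty^1)\in\{0,1\}$'' and then ruling out the value $1$ using the lower Khinchin bound together with the orthogonality $\int_{\mathbb{T}^\infty} w^{\alpha(m)}\overline{w^{\alpha(n)}}\,\mathrm{d}m_\infty = \delta_{mn}$ (which holds because the map $n\mapsto\alpha(n)$ is injective, by unique prime factorization).

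The main obstacle I anticipate is the $0$–$1$ law step in part (2) and, more precisely, the passage from ``$\mathcal{R}F\in\mathbf{H}_\infty^1$ with positive probability'' to a genuine contradiction: in infinitely many variables one cannot simply write $\mathcal{R}F$ as a sum of a ``tail'' that is independent of the first finitely many variables, because each coefficient $a_n$ multiplies a monomial $\zeta^{\alpha(n)}$ that may involve arbitrarily many variables. The right framework is to index the randomness by $n$ (not by the number of variables): the events depend on the tail $\sigma$-algebra of the i.i.d.\ sequence $\{X_n\}$ only up to the finitely-supported modifications coming from truncating $F$, and membership in $\mathbf{H}_\infty^1$ is unaffected by changing finitely many $a_n X_n$ (these alter $\mathcal{R}F$ by a polynomial, which lies in every $\mathbf{H}_\infty^p$). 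Hence $\{\mathcal{R}F\in\mathbf{H}_\infty^1\}$ is a tail event of $\{X_n\}$ and Kolmogorov's $0$–$1$ law applies; then the quantitative lower bound furnished by the lower Khinchin / Paley–Zygmund inequality, transferred through the isometry with $H^1(\mathbb{T}^\infty)$ and applied to truncations, eliminates the probability-$1$ alternative. A secondary technical point is justifying the $\mathbb{E}$–$\int_{\mathbb{T}^\infty}$ interchange and the convergence of the randomized series in $L^1(\Omega\times\mathbb{T}^\infty)$ when $\sum|a_n|^2 = \infty$; this is handled by working with truncations $F_N$ throughout and passing to the limit at the end via monotone convergence, never manipulating the divergent object directly.
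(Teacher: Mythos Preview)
Your approach to part~(1) is essentially the paper's: compute $\mathbb{E}\|\mathcal{R}F\|_p^p$ by Fubini and apply Khinchin--Kahane pointwise in $w$ to obtain the bound $C_p\|F\|_2^p$. The one technical point you gloss over is the meaning of the displayed identity $\mathbb{E}\|\mathcal{R}F\|_p^p=\mathbb{E}\int_{\mathbb{T}^\infty}|\sum_n a_nX_nw^{\alpha(n)}|^p\,\mathrm{d}m_\infty$: the $\mathbf{H}_\infty^p$-norm is defined through the dilations $F_{[r]}$, and the boundary series $\sum a_nX_nw^{\alpha(n)}$ is not known a priori to converge on $\mathbb{T}^\infty$ or to represent the radial limit. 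The paper handles this by working with $\mathcal{R}_{[r]}F$ throughout (shown to lie in the polydisk algebra $\mathbf{A}_\infty$ a.s.\ in Proposition~2.2, the Gaussian case requiring a separate Borel--Cantelli argument), using that $r\mapsto\int|\mathcal{R}_{[r]}F|^p\,\mathrm{d}m_\infty$ is increasing to justify monotone convergence, and only then passing to the supremum. This is a fixable technicality, but it is exactly the ``essential difficulty'' the introduction flags about convergence in infinitely many variables.

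Part~(2) has a genuine gap. Your chain is: $0$--$1$ law gives $\mathbb{P}(\mathcal{R}F\in\mathbf{H}_\infty^1)\in\{0,1\}$; assume the value is $1$; the lower Khinchin bound yields $\mathbb{E}\|\mathcal{R}F\|_1=\infty$ (or $\mathbb{E}\|\mathcal{R}F_N\|_1\to\infty$); contradiction. But $\mathbb{E}\|\mathcal{R}F\|_1=\infty$ does \emph{not} by itself contradict $\|\mathcal{R}F\|_1<\infty$ a.s.---a random variable can be finite a.s.\ with infinite mean. Your ``monotone convergence on truncations $F_N$'' does not help either, since $N\mapsto\|S_N\mathcal{R}F\|_1$ is not monotone (partial-sum projections are unbounded on $H^1$), and the contraction principle that would give $\mathbb{E}\|S_N\mathcal{R}F\|_1\le\mathbb{E}\|\mathcal{R}F\|_1$ already presupposes the finiteness of the right-hand side. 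What is missing is precisely the implication
\[
\|\mathcal{R}F\|_p<\infty\ \text{a.s.}\ \Longrightarrow\ \mathbb{E}\bigl(\|\mathcal{R}F\|_p^p\bigr)<\infty,
\]
which is nontrivial. The paper supplies this as Proposition~2.4, proved via the Marcinkiewicz--Zygmund--Kahane theorem (to pass from boundedness of dilations to boundedness of partial sums) together with Fernique's theorem (to upgrade a.s.\ boundedness of $\sup_n\|S_n\mathcal{R}F\|_p$ to exponential integrability). Once that implication is in hand, the paper's part~(2) is immediate: $F\notin\mathbf{H}_\infty^2$ gives $\mathbb{E}\|\mathcal{R}F\|_p^p=\infty$ from the same Khinchin computation, hence $\|\mathcal{R}F\|_p$ is not a.s.\ finite, and then the $0$--$1$ law finishes. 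Your Paley--Zygmund remark points in the right direction but does not substitute for this step; you should invoke Fernique (Gaussian case) or a Hoffmann--J{\o}rgensen/Kahane inequality (Bernoulli and Steinhaus cases) explicitly.
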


Before proving Theorem \ref{thm0201}, we need some preparations.
It seems not easy to claim whether the random series $\mathcal{R}F$ converges in $\mathbb{D}_1^\infty$.
So we begin with its dilation
\begin{equation}\label{equ202}
(\mathcal{R}_{[r]}F)(w)=\sum_{n=1}^\infty a_n X_n r^{|\alpha(n)|_{*}} w^{\alpha(n)},\quad w\in\mathbb{T}^\infty,
\end{equation}
where $0<r<1$.

Let $\overline{\mathbb{D}}^\infty=\overline{\mathbb{D}}\times\overline{\mathbb{D}}\times\cdots$ denote the cartesian product of countably infinitely many closed unit disks $\overline{\mathbb{D}}$.
Equipped with the product topology, it is compact.
The infinite polydisk algebra, denoted by $\mathbf{A}_\infty$, is defined to be the norm-closure of $\mathcal{P}_\infty$ in $C(\overline{\mathbb{D}}^\infty)$, the Banach algebra of all continuous functions on $\overline{\mathbb{D}}^\infty$.
Since $\mathbf{A}_\infty$ is of the Shilov boundary $\mathbb{T}^\infty$, one naturally identifies it with $A(\mathbb{T}^\infty)$ by the restriction map, where $A(\mathbb{T}^\infty)$ denotes the norm-closure of $\mathcal{P}_\infty$ in the Banach algebra $C(\mathbb{T}^\infty)$ of all continuous functions on $\mathbb{T}^\infty$, see \cite{CG}.
Given an analytic function $F$ on $\mathbb{D}_1^\infty$ with the monomial expansion (\ref{equ200}), for each $0<r<1$, write
\begin{equation}\label{equ201}
F_{[r]}(w)=F(rw_1,\ldots,r^{n}w_n,\ldots)=\sum_{n=1}^\infty a_n r^{|\alpha(n)|_{*}} w^{\alpha(n)},\quad w\in\mathbb{T}^\infty,
\end{equation}
where $|\alpha(n)|_{*}=\alpha_1+\cdots+k\alpha_k$.
As mentioned in Introduction, (\ref{equ200}) converges uniformly and absolutely on the compact subset $r\mathbb{T}\times\cdots r^n\mathbb{T}\times\cdots$ of $\mathbb{D}_1^\infty$.
Therefore, the series $\sum_{n=1}^\infty|a_n|r^{|\alpha(n)|_{*}}$ converges, and hence $F_{[r]}\in \mathbf{A}_\infty$.
Furthermore, if $1\leq p<\infty$ and $F\in\mathbf{H}_\infty^p$, (\ref{equ201}) is a convergent series in $\mathbf{H}_\infty^p$, and $\|F_{[r]}-F\|_p\rightarrow0$ as $r\rightarrow1$.
For more details, we refer readers to \cite{AOS, DMP, DG}.

The following proposition shows that the random function $\mathcal{R}_{[r]}F$ defined in (\ref{equ202}) lies in the infinite polydisk algebra almost surely.

\begin{prop}\label{thm0200A}
Let $\bm{X}$ be a standard random sequence and $F$ an analytic function on $\mathbb{D}_1^\infty$.
Then for each $0<r<1$, $\mathcal{R}_{[r]}F\in\mathbf{A}_\infty$ a.s..
\end{prop}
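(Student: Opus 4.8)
The plan is to reduce the statement to the absolute convergence of a \emph{deterministic} series. Recall that the excerpt already established that, for each fixed $0<r<1$, the monomial expansion (\ref{equ200}) converges absolutely and uniformly on the compact set $r\mathbb{T}\times\cdots\times r^n\mathbb{T}\times\cdots$, so that $\sum_{n=1}^\infty|a_n|r^{|\alpha(n)|_{*}}<\infty$. The key observation is that on the closed infinite polydisk $\overline{\mathbb{D}}^\infty$ one has $|w^{\alpha(n)}|\le1$ for every $w$, hence the general term of the random series (\ref{equ202}) is dominated pointwise by $|a_n X_n|\,r^{|\alpha(n)|_{*}}$. Consequently, on any sample point $\omega$ for which
$$\sum_{n=1}^\infty|a_n X_n(\omega)|\,r^{|\alpha(n)|_{*}}<\infty,$$
the Weierstrass $M$-test shows that (\ref{equ202}) converges uniformly on $\overline{\mathbb{D}}^\infty$; each partial sum is a polynomial depending on finitely many variables, hence an element of $\mathcal{P}_\infty\subset\mathbf{A}_\infty$, and since $\mathbf{A}_\infty$ is norm-closed in $C(\overline{\mathbb{D}}^\infty)$, the limit $\mathcal{R}_{[r]}F$ lies in $\mathbf{A}_\infty$. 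Thus it suffices to prove that the displayed random series is finite almost surely.

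For a standard Bernoulli or Steinhaus sequence this is trivial: $|X_n|\equiv1$, so the random series coincides with the convergent scalar series $\sum_{n=1}^\infty|a_n|r^{|\alpha(n)|_{*}}$, and the claim holds surely. For a standard Gaussian $N(0,1)$ sequence the variables are unbounded, and here one needs one extra line. Each $X_n$ shares the same finite first absolute moment $\mathbb{E}|X_n|=\sqrt{2/\pi}$, so Tonelli's theorem gives
$$\mathbb{E}\left(\sum_{n=1}^\infty|a_n X_n|\,r^{|\alpha(n)|_{*}}\right)=\sqrt{\tfrac{2}{\pi}}\sum_{n=1}^\infty|a_n|\,r^{|\alpha(n)|_{*}}<\infty,$$
whence the nonnegative random variable $\sum_{n=1}^\infty|a_n X_n|\,r^{|\alpha(n)|_{*}}$ is finite a.s. Combined with the reduction of the first paragraph, this proves the proposition in all three cases.

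I do not expect a genuine obstacle here. The only point requiring a little care is the unboundedness of Gaussian variables, which is dealt with by the elementary first-moment estimate above; everything else is the routine fact that a uniform limit of polynomials on $\overline{\mathbb{D}}^\infty$ belongs to the closed subalgebra $\mathbf{A}_\infty$. It is worth noting that this proposition is precisely what licenses working with the dilations $\mathcal{R}_{[r]}F$ in the proof of Theorem \ref{thm0201}, since --- as the excerpt already flagged --- it is unclear whether $\mathcal{R}F$ itself converges on $\mathbb{D}_1^\infty$.
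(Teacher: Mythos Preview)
Your argument is correct, and in fact for the Gaussian case it is more elementary than the paper's. The paper proves the same reduction to $\sum_{n=1}^\infty|a_nX_n|r^{|\alpha(n)|_*}<\infty$ a.s., but instead of the one-line first-moment estimate it develops two preparatory lemmas: a root test adapted to the weight $|\alpha(n)|_*$ (Lemma~\ref{thm0201A}) and the almost-sure limit $|X_n|^{1/|\alpha(n)|_*}\to1$ via Borel--Cantelli (Lemma~\ref{thm0202A}), then combines them through an auxiliary radius $r_0\in(r,1)$. Your Tonelli argument exploits the i.i.d.\ structure and the finite first absolute moment of $N(0,1)$, which suffices here and is shorter. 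The paper's longer route is not wasted, however: Lemmas~\ref{thm0201A} and~\ref{thm0202A} are reused verbatim in Section~3 (specifically in the proof of Lemma~\ref{thm0304}), where the Gaussian process is no longer assumed independent and a first-moment trick is not obviously available; so the paper is building machinery it needs later, whereas your proof is optimized for this proposition alone.
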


Suppose that $0<r<1$ and $F$ is analytic on $\mathbb{D}_1^\infty$ with the monomial expansion (\ref{equ200}), we have just mentioned that $\sum_{n=1}^\infty|a_n|r^{|\alpha(n)|_{*}}<\infty$.
If $\bm{X}$ is either a standard Bernoulli, or Steinhaus sequence, then $|X_n|=1$ a.s., which implies that the series in (\ref{equ202}) converges uniformly and absolutely on $\mathbb{T}^\infty$ a.s., and hence $\mathcal{R}_{[r]}F\in\mathbf{A}_\infty$ a.s..
Now it suffices to prove Proposition \ref{thm0200A} for standard Gaussian $N(0,1)$ sequences, and the following two lemmas are needed.

\begin{lem}\label{thm0201A}
Let $\sum_{n=1}^\infty x_n$ be a series whose entries are all nonnegative. Write
$$\gamma=\limsup_{n\rightarrow\infty}x_n^{\frac{1}{|\alpha(n)|_{*}}}.$$
If $\gamma<1$, then $\sum_{n=1}^\infty x_n$ converges, and if $\gamma>1$, then $\sum_{n=1}^\infty x_n$ diverges.
\end{lem}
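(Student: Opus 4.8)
The plan is to reduce everything to the single estimate that $\sum_{n=1}^\infty q^{|\alpha(n)|_{*}}<\infty$ for every fixed $0<q<1$, which is a disguised Euler product, and then dispatch the two halves of the statement separately. A preliminary observation I will record is that $|\alpha(n)|_{*}\to\infty$ as $n\to\infty$: indeed, if $|\alpha(n)|_{*}=\alpha_1+2\alpha_2+\cdots+k\alpha_k\le M$, then only the primes $p_1,\dots,p_M$ can divide $n$, each to a power at most $M$, so $\{n:|\alpha(n)|_{*}\le M\}$ is finite. In particular the quantity $x_n^{1/|\alpha(n)|_{*}}$ is well defined for all large $n$, so $\gamma$ makes sense.

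The divergence case is immediate. If $\gamma>1$, then by the definition of $\limsup$ there are infinitely many indices $n$ with $x_n^{1/|\alpha(n)|_{*}}>1$, i.e. $x_n>1$; hence $x_n\not\to0$ and $\sum_{n=1}^\infty x_n$ diverges. For the convergence case, assume $\gamma<1$ and fix $q$ with $\gamma<q<1$. Then there is $N_0$ such that $x_n^{1/|\alpha(n)|_{*}}<q$, hence $x_n<q^{|\alpha(n)|_{*}}$, for every $n\ge N_0$; so it suffices to show $\sum_{n=1}^\infty q^{|\alpha(n)|_{*}}<\infty$.

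For this, recall that unique prime factorization makes $n\mapsto\alpha(n)$ a bijection from $\mathbb{N}$ onto the set of finitely supported sequences of nonnegative integers, with $|\alpha(n)|_{*}=\sum_{j\ge1}j\alpha_j$. Expanding the finite partial products
$$\prod_{j=1}^{N}\frac{1}{1-q^j}=\prod_{j=1}^{N}\Bigl(\sum_{m=0}^\infty q^{jm}\Bigr)=\sum_{n=p_1^{m_1}\cdots p_N^{m_N}}q^{|\alpha(n)|_{*}},$$
and letting $N\to\infty$ (all terms are nonnegative, so the rearrangement and the interchange of sum and limit are justified by monotone convergence), one obtains
$$\sum_{n=1}^\infty q^{|\alpha(n)|_{*}}=\prod_{j=1}^\infty\frac{1}{1-q^j}.$$
The right-hand product is finite because $\sum_{j=1}^\infty\frac{q^j}{1-q^j}\le\frac{1}{1-q}\sum_{j=1}^\infty q^j<\infty$. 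Combining this with the estimate $x_n<q^{|\alpha(n)|_{*}}$ for $n\ge N_0$ yields $\sum_{n=1}^\infty x_n<\infty$.

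There is no serious obstacle here; the content is really the identification of $\sum_n q^{|\alpha(n)|_{*}}$ with the partition generating function $\prod_j(1-q^j)^{-1}$, and the only points deserving a word of care are that $|\alpha(n)|_{*}\to\infty$ (so the root expression is eventually meaningful) and the legitimacy of the sum--product interchange, both of which are routine once the terms are nonnegative.
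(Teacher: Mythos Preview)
Your proof is correct and follows essentially the same route as the paper: for $\gamma<1$ you dominate by $q^{|\alpha(n)|_*}$ and sum via the Euler product $\prod_{j\ge1}(1-q^j)^{-1}$, and for $\gamma>1$ you note the terms do not tend to zero. You supply more detail than the paper (the finiteness of $\{n:|\alpha(n)|_*\le M\}$, the justification of the sum--product identity, and the convergence of the infinite product), but the argument is the same.
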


\begin{proof}
If $\gamma<1$, choose $\beta\in(\gamma,1)$. We see that there is a positive integer $N$ for which if $n>N$, then $x_n^{\frac{1}{|\alpha(n)|_{*}}}\leq\beta<1$. Therefore
$$\sum_{n=1}^\infty x_n\leq\sum_{n=1}^\infty\beta^{|\alpha(n)|_{*}}=\prod_{j=1}^\infty\frac{1}{1-\beta^{j}}<\infty.$$
If $\gamma>1$, then $\limsup_{n\rightarrow\infty} x_n\geq1$, which implies that $\sum_{n=1}^\infty x_n$ diverges.
\end{proof}

The following lemma is a consequence of Borel-Contelli lemma, which is an analogue of \cite[Lemma 6]{CFGL}.

\begin{lem}\label{thm0202A}
Let $\{X_n\}_{n=1}^\infty$ be a sequence of Gaussian $N(0,1)$ variables, then
$$\lim_{n\rightarrow\infty}|X_n|^{\frac{1}{|\alpha(n)|_{*}}}=1\quad \mathrm{a.s.}.$$
\end{lem}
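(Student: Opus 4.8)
The plan is to prove the two one-sided bounds
$$\limsup_{n\to\infty}|X_n|^{\frac{1}{|\alpha(n)|_*}}\le 1\quad\text{and}\quad\liminf_{n\to\infty}|X_n|^{\frac{1}{|\alpha(n)|_*}}\ge 1$$
separately, each holding a.s., via the convergence half of the Borel-Cantelli lemma; note that independence of the $X_n$ is not needed. Since $|\alpha(n)|_*\ge 1$ for all $n\ge 2$ and the value at $n=1$ does not affect the limit, I would work with $n\ge 2$ throughout. The one nontrivial input is that $\sum_{n=1}^\infty e^{-\varepsilon|\alpha(n)|_*}<\infty$ for every $\varepsilon>0$, which is exactly the content of Lemma \ref{thm0201A} applied to $x_n=e^{-\varepsilon|\alpha(n)|_*}$: here $x_n^{1/|\alpha(n)|_*}=e^{-\varepsilon}$, so $\gamma=e^{-\varepsilon}<1$ and the series converges (equivalently one may invoke the identity $\sum_n\beta^{|\alpha(n)|_*}=\prod_{j\ge1}(1-\beta^j)^{-1}$ for $0<\beta<1$).

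For the upper bound, fix $\varepsilon>0$ and set $A_n=\{|X_n|>e^{\varepsilon|\alpha(n)|_*}\}$. By Markov's inequality and $\mathbb{E}|X_n|=\sqrt{2/\pi}$,
$$\mathbb{P}(A_n)\le\sqrt{2/\pi}\,e^{-\varepsilon|\alpha(n)|_*}.$$
Summing and using the observation above, $\sum_n\mathbb{P}(A_n)<\infty$, so by Borel-Cantelli only finitely many $A_n$ occur a.s.; hence a.s. $|X_n|^{1/|\alpha(n)|_*}\le e^{\varepsilon}$ for all large $n$, and $\limsup_n|X_n|^{1/|\alpha(n)|_*}\le e^{\varepsilon}$ a.s.

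For the lower bound, with the same $\varepsilon$ set $B_n=\{|X_n|<e^{-\varepsilon|\alpha(n)|_*}\}$. Since the density of $|X_n|$ on $[0,\infty)$ is $\sqrt{2/\pi}\,e^{-x^2/2}\le\sqrt{2/\pi}$, we get $\mathbb{P}(B_n)\le\sqrt{2/\pi}\,e^{-\varepsilon|\alpha(n)|_*}$, so again $\sum_n\mathbb{P}(B_n)<\infty$ and Borel-Cantelli gives $\liminf_n|X_n|^{1/|\alpha(n)|_*}\ge e^{-\varepsilon}$ a.s. Intersecting the exceptional null sets over $\varepsilon=1/k$, $k\in\mathbb{N}$, I conclude that a.s. $e^{-1/k}\le\liminf_n|X_n|^{1/|\alpha(n)|_*}\le\limsup_n|X_n|^{1/|\alpha(n)|_*}\le e^{1/k}$ for every $k$, whence the limit equals $1$ a.s.

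The only place where care is required is the summability of $\sum_n e^{-\varepsilon|\alpha(n)|_*}$ for \emph{every} $\varepsilon>0$: a crude estimate such as $|\alpha(n)|_*\ge\log_2 n$ would only yield this for large $\varepsilon$, so it is essential to route through Lemma \ref{thm0201A}. Everything else is a routine application of Borel-Cantelli combined with the elementary two-sided control of the half-normal law near $0$ and near $\infty$.
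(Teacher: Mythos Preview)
Your proof is correct and follows essentially the same two-sided Borel--Cantelli strategy as the paper. The only difference is cosmetic: the paper parametrizes the thresholds as $(1\pm\varepsilon)^{|\alpha(n)|_*}$ and, for the upper bound, invokes the Gaussian tail estimate $\mathbb{P}(|X|\ge x)\le\sqrt{2/\pi}\,e^{-x^2/2}$, whereas you use $e^{\pm\varepsilon|\alpha(n)|_*}$ and Markov's inequality; your estimate is weaker but entirely sufficient, since the summability still reduces to $\sum_n \beta^{|\alpha(n)|_*}<\infty$ for $\beta=e^{-\varepsilon}<1$ via Lemma~\ref{thm0201A}.
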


We mention the Gaussian sequence in this lemma is not necessarily independent.
To prove this lemma, we present a short piece of useful reasoning.
Let $X$ be a Gaussian $N(0,1)$ random variable and
$$\rho(x)=\frac{1}{\sqrt{2\pi}}\exp\left(-\frac{x^2}{2}\right),\quad x\in\mathbb{R}$$
its density function over the real line $\mathbb{R}$.
Then for every $x\geq1$,
$$\mathbb{P}(X\geq x)\leq x\int_{x}^\infty\rho(u)\mathrm{d}u
\leq\frac{1}{\sqrt{2\pi}}\int_{x}^\infty u\exp\left(-\frac{u^2}{2}\right)\mathrm{d}u
=\frac{1}{\sqrt{2\pi}}\exp\left(-\frac{x^2}{2}\right),$$
and hence
\begin{equation}\label{equ203}
\mathbb{P}(|X|\geq x)=2\mathbb{P}(X\geq x)\leq\sqrt{\frac{2}{\pi}}\exp\left(-\frac{x^2}{2}\right).
\end{equation}
On the other hand, for every $x>0$, it is clear that
\begin{equation}\label{equ203B}
\mathbb{P}(|X|\leq x)=2\int_{0}^x\rho(u)\mathrm{d}u=\sqrt{\frac{2}{\pi}}\int_{0}^x\exp\left(-\frac{u^2}{2}\right)\mathrm{d}u\leq\sqrt{\frac{2}{\pi}}x.
\end{equation}

\noindent\textbf{Proof of Lemma \ref{thm0202A}.} For each $\varepsilon>0$ and $n\in\mathbb{N}$, write
$$A_{n,\varepsilon}=\left\{|X_n|\geq(1+\varepsilon)^{|\alpha(n)|_{*}}\right\}.$$
Then by (\ref{equ203}),
\begin{equation}\label{equ204}
\sum_{n=1}^\infty\mathbb{P}(A_{n,\varepsilon})\leq\sqrt{\frac{2}{\pi}}\sum_{n=1}^\infty\exp\left[-\frac{(1+\varepsilon)^{2|\alpha(n)|_{*}}}{2}\right].
\end{equation}
Since for every $N>0$, the inequality $|\alpha(n)|_{*}\leq N$ has only finitely many solutions, we see that $|\alpha(n)|_{*}\rightarrow\infty$ as $n\rightarrow\infty$, and hence
$$\left\{\exp\left[-\frac{(1+\varepsilon)^{2|\alpha(n)|_{*}}}{2}\right]\right\}^{\frac{1}{|\alpha(n)|_{*}}}
=\exp\left[-\frac{(1+\varepsilon)^{2|\alpha(n)|_{*}}}{2|\alpha(n)|_{*}}\right]\rightarrow0$$
as $n\rightarrow\infty$.
Then it follows from Lemma \ref{thm0201A} that the right side of (\ref{equ204}) is finite, which reveals that for every $\varepsilon>0$, $\sum_{n=1}^\infty\mathbb{P}(A_{n,\varepsilon})<\infty$.
Hence by Borel-Contelli lemma, for every $\varepsilon>0$, $\mathbb{P}(\limsup_{n\rightarrow\infty}A_{n,\varepsilon})=0$, which gives that
\begin{equation}\label{equ205}
\limsup_{n\rightarrow\infty}|X_n|^{\frac{1}{|\alpha(n)|_{*}}}\leq1\quad \mathrm{a.s.}.
\end{equation}
On the other hand, for each $\varepsilon>0$ and $n\in\mathbb{N}$, write
$$B_{n,\varepsilon}=\left\{|X_n|\leq(1-\varepsilon)^{|\alpha(n)|_{*}}\right\}.$$
It follows from (\ref{equ203B}) that
$$\sum_{n=1}^\infty\mathbb{P}(B_{n,\varepsilon})\leq\sqrt{\frac{2}{\pi}}\sum_{n=1}^\infty(1-\varepsilon)^{|\alpha(n)|_{*}}=\sqrt{\frac{2}{\pi}}\prod_{j=1}^\infty\frac{1}{1-(1-\varepsilon)^{j}}<\infty.$$
Again by Borel-Contelli lemma, for every $\varepsilon>0$, $\mathbb{P}(\limsup_{n\rightarrow\infty}B_{n,\varepsilon})=0$, which implies that
$$\liminf_{n\rightarrow\infty}|X_n|^{\frac{1}{|\alpha(n)|_{*}}}\geq1\quad \mathrm{a.s.}.$$
Combining this with (\ref{equ205}) yields the desired conclusion. $\hfill\square$

\vskip2mm

Now we can give the proof of Proposition \ref{thm0200A}.

\vskip2mm

\noindent\textbf{Proof of Proposition \ref{thm0200A}.} Without loss of generality, assume that $\bm{X}$ is a standard Gaussian $N(0,1)$ sequence.
Suppose that $F$ has the monomial expansion (\ref{equ200}), it suffices to show that for each $0<r<1$, $\sum_{n=1}^\infty |a_n||X_n|r^{|\alpha(n)|_{*}}$ converges a.s..
Choose $r_0\in(r,1)$.
We have mentioned after Theorem \ref{thm0201} that the series $\sum_{n=1}^\infty|a_n|r_0^{|\alpha(n)|_{*}}$ converges.
Hence by Lemma \ref{thm0201A}, $\limsup_{n\rightarrow\infty}|a_n|^{\frac{1}{|\alpha(n)|_{*}}}r_0\leq1$.
Then it follows from Lemma \ref{thm0202A} that
$$\limsup_{n\rightarrow\infty}\left(|a_n||X_n|r^{|\alpha(n)|_{*}}\right)^{\frac{1}{|\alpha(n)|_{*}}}
=r\limsup_{n\rightarrow\infty}|a_n|^{\frac{1}{|\alpha(n)|_{*}}}\lim_{n\rightarrow\infty}|X_n|^{\frac{1}{|\alpha(n)|_{*}}}\leq\frac{r}{r_0}<1\quad \mathrm{a.s.}.$$
Again by Lemma \ref{thm0201A}, $\sum_{n=1}^\infty |a_n||X_n|r^{|\alpha(n)|_{*}}$ converges a.s., as desired. $\hfill\square$

\vskip2mm

The key to proving Theorem \ref{thm0201} is to discuss whether
\begin{equation}\label{equ205B}
\|\mathcal{R}F\|_p^p=\sup_{0<r<1}\int_{\mathbb{T}^\infty}|\mathcal{R}_{[r]}F|^p\mathrm{d}m_\infty<\infty\quad \mathrm{a.s.}.
\end{equation}
By Proposition \ref{thm0200A}, for each $0<r<1$, $\mathcal{R}_{[r]}F\in\mathbf{A}_\infty$ a.s., and hence integrals in (\ref{equ205B}) increase with $r$, a.s..
Fore more details, we refer readers to \cite[Section 2]{DG}.
The following proposition presents some equivalent characterizations for the finiteness of $\|\mathcal{R}F\|_p$.
For one-variable setting, see \cite{CFL}.

\begin{prop}\label{thm0203}
Suppose that $1\leq p<\infty$ and $\bm{X}$ is a standard random sequence. If $F$ is analytic on $\mathbb{D}_1^\infty$, then the following statements are equivalent:

(1) $\|\mathcal{R}F\|_p$ is finite a.s.;

(2) $\{S_n\mathcal{R}F\}_{n=1}^\infty$ is bounded in $\mathbf{H}_\infty^p$ a.s., where $S_n\mathcal{R}F$ denotes the $n$-th partial sum of $\mathcal{R}F$;

(3) There exists some $\lambda>0$ such that $\mathbb{E}(\exp(\lambda\|\mathcal{R}F\|_{p}^p))$ is finite;

(4) For every $\beta>0$, $\mathbb{E}(\|\mathcal{R}F\|_{p}^\beta)$ is finite;

(5) There exists some $\beta>0$ such that $\mathbb{E}(\|\mathcal{R}F\|_{p}^\beta)$ is finite.
\end{prop}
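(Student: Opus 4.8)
The plan is to close the cycle $(1)\Rightarrow(2)\Rightarrow(3)\Rightarrow(4)\Rightarrow(5)\Rightarrow(1)$, recording $(2)\Rightarrow(1)$ directly for clarity. The block $(3)\Rightarrow(4)\Rightarrow(5)\Rightarrow(1)$ is soft: $(3)\Rightarrow(4)$ follows from $x^{\beta/p}\leq(\beta/ep\lambda)^{\beta/p}e^{\lambda x}$ for $x\geq0$, so $\mathbb{E}\|\mathcal{R}F\|_p^{\beta}\leq C_{\beta}\,\mathbb{E}\exp(\lambda\|\mathcal{R}F\|_p^{p})<\infty$; $(4)\Rightarrow(5)$ is trivial; and $(5)\Rightarrow(1)$ is the remark that a nonnegative random variable with a finite moment is a.s. finite. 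For $(2)\Rightarrow(1)$ I would use that, almost surely and for each fixed $r$, $S_n\mathcal{R}_{[r]}F\to\mathcal{R}_{[r]}F$ uniformly on $\mathbb{T}^\infty$ (since $\sum_n|a_n||X_n|r^{|\alpha(n)|_{*}}<\infty$ a.s., exactly as in Proposition \ref{thm0200A}), together with $\|S_n\mathcal{R}F\|_p=\sup_{0<r<1}\|S_n\mathcal{R}_{[r]}F\|_{L^p(\mathbb{T}^\infty)}$ (valid because $S_n\mathcal{R}F$ is a polynomial), which give
$$\|\mathcal{R}F\|_p=\sup_{0<r<1}\lim_{n\to\infty}\|S_n\mathcal{R}_{[r]}F\|_{L^p(\mathbb{T}^\infty)}\leq\sup_{n}\ \sup_{0<r<1}\|S_n\mathcal{R}_{[r]}F\|_{L^p(\mathbb{T}^\infty)}=\sup_n\|S_n\mathcal{R}F\|_p .$$

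The first genuine step is $(1)\Rightarrow(2)$. Fix $0<r<1$. By Proposition \ref{thm0200A} the series $\sum_n a_nX_nr^{|\alpha(n)|_{*}}\zeta^{\alpha(n)}$ converges a.s.\ in $\mathbf{A}_\infty$, hence in $L^p(\mathbb{T}^\infty)$, to $\mathcal{R}_{[r]}F$, and for each standard random sequence its terms are independent and symmetric. Lévy's maximal inequality then gives, for $t>0$,
$$\mathbb{P}\Big(\sup_n\|S_n\mathcal{R}_{[r]}F\|_{L^p(\mathbb{T}^\infty)}>t\Big)\leq 2\,\mathbb{P}\big(\|\mathcal{R}_{[r]}F\|_{L^p(\mathbb{T}^\infty)}\geq t\big)\leq 2\,\mathbb{P}\big(\|\mathcal{R}F\|_p\geq t\big),$$
using $\|\mathcal{R}_{[r]}F\|_{L^p(\mathbb{T}^\infty)}\leq\|\mathcal{R}F\|_p$. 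The key point is that $r\mapsto\|S_n\mathcal{R}_{[r]}F\|_{L^p(\mathbb{T}^\infty)}$ is nondecreasing for each $n$ (the integral means of the polynomial $S_n\mathcal{R}F$ increase with $r$, by the same reasoning as for $\mathcal{R}_{[r]}F$; see \cite[Section~2]{DG} and the paragraph preceding this proposition), so $\sup_n\|S_n\mathcal{R}F\|_p=\lim_{r\uparrow1}\sup_n\|S_n\mathcal{R}_{[r]}F\|_{L^p(\mathbb{T}^\infty)}$ is an increasing limit; letting $r\uparrow1$ in the bound above — which is why no crude union bound over $r$ is needed — yields $\mathbb{P}(\sup_n\|S_n\mathcal{R}F\|_p>t)\leq 2\,\mathbb{P}(\|\mathcal{R}F\|_p\geq t)$, whose right-hand side tends to $0$ as $t\to\infty$ by (1). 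Hence $\sup_n\|S_n\mathcal{R}F\|_p<\infty$ a.s., i.e.\ (2).

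Finally, $(2)\Rightarrow(3)$ is where the Banach-space geometry of $\mathbf{H}_\infty^p$ enters. Condition (2) says the partial sums of the independent, symmetric series $\mathcal{R}F=\sum_n a_nX_n\zeta^{\alpha(n)}$, regarded in $\mathbf{H}_\infty^p$, are a.s.\ bounded. Because $\mathbf{H}_\infty^p\cong H^p(\mathbb{T}^\infty)$ is a closed subspace of $L^p(\mathbb{T}^\infty)$ with $1\leq p<\infty$, it has finite cotype and therefore contains no isomorphic copy of $c_0$; by the It\^o--Nisio theorem the series $\mathcal{R}F$ then converges a.s.\ in $\mathbf{H}_\infty^p$, to an element whose $n$-th monomial coefficient is $a_nX_n$ and whose norm coincides with $\|\mathcal{R}F\|_p$ (compare $[r]$-dilations). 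Kahane's inequality on the exponential integrability of an a.s.\ convergent Banach-space-valued series associated with a standard random sequence (see \cite{Kah}, and \cite{CFL} for one variable) now gives $\mathbb{E}\exp(\lambda\|\mathcal{R}F\|_p^{p})<\infty$ for some $\lambda>0$, i.e.\ (3). In the Gaussian case there is an alternative, more self-contained route that even bypasses $(1)\Rightarrow(2)$: $X\mapsto\|\mathcal{R}F\|_p$ is a measurable seminorm on the sequence space carrying the product $N(0,1)$ measure, finite a.s.\ by (1), so Borell's integrability theorem for Gaussian seminorms applies and yields the required exponential integrability at once.

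I expect the delicate step to be $(2)\Rightarrow(3)$: one must upgrade ``a.s.\ bounded partial sums'' to ``a.s.\ convergence of $\mathcal{R}F$ in the infinite-dimensional space $\mathbf{H}_\infty^p$'' (using that $\mathbf{H}_\infty^p$ carries no copy of $c_0$, via the It\^o--Nisio/Hoffmann--J\o rgensen circle of results) and then identify the norm of the a.s.\ limit with $\|\mathcal{R}F\|_p=\sup_{0<r<1}\int_{\mathbb{T}^\infty}|\mathcal{R}_{[r]}F|^p\,\mathrm{d}m_\infty$ before Kahane's inequality is available. The other, infinite-variable–specific point of care is the bookkeeping with the dilations: the monotonicity of $r\mapsto\|S_n\mathcal{R}_{[r]}F\|_{L^p(\mathbb{T}^\infty)}$ is exactly what lets fixed-$r$ estimates — where $\mathcal{R}_{[r]}F\in\mathbf{A}_\infty$ and Lévy's inequality is legitimate — be transferred to statements about $\|\mathcal{R}F\|_p$ itself.
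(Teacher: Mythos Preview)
Your argument is correct, and the soft parts $(3)\Rightarrow(4)\Rightarrow(5)\Rightarrow(1)$ match the paper. The two substantive steps, however, are handled differently. For $(1)\Rightarrow(2)$ the paper packages the dilations as a summability matrix $b_{mn}=r_m^{|\alpha(n)|_*}$ and invokes the Marcinkiewitz--Zygmund--Kahane theorem \cite[Chapter 6, Theorem II.4]{LQ} to pass from boundedness of $\{\mathcal{R}_{[r_m]}F\}_m$ to boundedness of $\{S_n\mathcal{R}F\}_n$; your route via L\'evy's maximal inequality together with the monotonicity of $r\mapsto\|S_n\mathcal{R}_{[r]}F\|_{L^p(\mathbb{T}^\infty)}$ is more elementary and gives the explicit tail bound $\mathbb{P}(\sup_n\|S_n\mathcal{R}F\|_p>t)\leq 2\,\mathbb{P}(\|\mathcal{R}F\|_p\geq t)$ as a bonus. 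For $(2)\Rightarrow(3)$ the paper is more direct: it applies the Fernique--Kahane exponential integrability \cite[Lemma 9]{CFL} straight to $M=\sup_n\|S_n\mathcal{R}F\|_p^p$ (a.s.\ finite by hypothesis) and then uses the inequality $\|\mathcal{R}F\|_p^p\leq M$ that you already recorded in $(2)\Rightarrow(1)$. Your detour through It\^o--Nisio and the absence of $c_0$ in $\mathbf{H}_\infty^p$ is valid (indeed $H^p(\mathbb{T}^\infty)\subset L^p(\mathbb{T}^\infty)$ has finite cotype), but it is unnecessary: the exponential integrability results for standard random sequences apply to the supremum of partial sums under a.s.\ boundedness alone, without first establishing convergence in $\mathbf{H}_\infty^p$.
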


\begin{proof}
$(1)\Rightarrow(2)$: Write $Z_{n}=a_n X_n\zeta^{\alpha(n)}$.
It is clear that $\{Z_{n}\}_{n=1}^\infty$ defines a sequence of independent and symmetric random vectors with values in $\mathbf{H}_\infty^p$.
Choose a sequence $r_m\rightarrow1$ and put $b_{mn}=r_m^{|\alpha(n)|_{*}}$.
Then $b_{mn}\rightarrow1$ as $m\rightarrow\infty$, and
$$(\mathcal{R}_{[r_m]}F)(w)=\sum_{n=1}^\infty a_n X_n r_m^{|\alpha(n)|_{*}} w^{\alpha(n)}=\sum_{n=1}^\infty b_{mn}Z_{n}(w),\quad w\in\mathbb{T}^\infty.$$
By the proof of Proposition \ref{thm0200A}, for each $m\in\mathbb{N}$, $\sum_{n=1}^\infty b_{mn}Z_{n}$ converges to $\mathcal{R}_{[r_m]}F\in\mathbf{A}_\infty$ uniformly on $\mathbb{T}^\infty$ a.s..
Therefore, $\mathcal{R}_{[r_m]}F=\sum_{n=1}^\infty b_{mn}Z_{n}$ is also a convergent series in $\mathbf{H}_\infty^p$ a.s..
On the other hand, the finiteness of $\|\mathcal{R}F\|_p$ implies that $\{\mathcal{R}_{[r_m]}F\}_{m=1}^\infty$ is bounded in $\mathbf{H}_\infty^p$ a.s..
Therefore, by Marcinkiewitz-Zygmund-Kahane theorem \cite[Chapter 6, Themrem II.4]{LQ}, $S_n\mathcal{R}F=\sum_{k=1}^n Z_k$ is bounded in $\mathbf{H}_\infty^p$ a.s..

$(2)\Rightarrow(3)$: Write $M=\sup_{n\geq1}\|S_n\mathcal{R}F\|_{p}^p$, then $M$ is finite a.s..
Hence by Fernique's theorem \cite[Lemma 9]{CFL}, $\mathbb{E}(\exp(\lambda M))$ is finite when $\lambda>0$ is small enough.
As mentioned before, for every $0<r<1$, $\{S_n\mathcal{R}_{[r]}F\}_{n=1}^\infty$ converges to $\mathcal{R}_{[r]}F$ in $\mathbf{H}_\infty^p$-norm a.s..
Therefore,
$$\|\mathcal{R}F\|_p^p=\sup_{0<r<1}\|\mathcal{R}_{[r]}F\|_p^p=\sup_{0<r<1}\lim_{n\rightarrow\infty}\|S_n\mathcal{R}_{[r]}F\|_p^p\leq\liminf_{n\rightarrow\infty}\|S_n\mathcal{R}F\|_p^p\leq M\quad \mathrm{a.s.},$$
which implies that
$$\mathbb{E}(\exp(\lambda\|\mathcal{R}F\|_{p}^p))\leq\exp(\lambda M)<\infty.$$

$(3)\Rightarrow(4)$: For a fixed $\beta>0$, we have $x^{\frac{\beta}{p}}\exp(-x)\rightarrow0$ as $x\rightarrow+\infty$, and hence there is a constant $C>0$ such that $x^{\frac{\beta}{p}}\leq C\exp x$ for all $x\geq0$.
Put $x=\lambda\|\mathcal{R}F\|_p^p$, then
$$\|\mathcal{R}F\|_{p}^\beta\leq C\lambda^{-\frac{\beta}{p}}\exp(\lambda\|\mathcal{R}F\|_{p}^p).$$
Combining this with (3) yields that $\mathbb{E}(\|\mathcal{R}F\|_{p}^\beta)$ is finite.

$(4)\Rightarrow(5)\Rightarrow(1)$: Obviously.
\end{proof}

Now we present the proof of Theorem \ref{thm0201}.
Here we mention that an analytic function $F$ on $\mathbb{D}_1^\infty$ with monomial expansion (\ref{equ200}) lies in the Hardy space $\mathbf{H}_\infty^2$ if and only if $\{a_n\}_{n=1}^\infty\in\ell^2$, see \cite{Ni}.

In what follows, ``$A\sim B$" means that $A$ is comparable to $B$, that is, there are absolute constants $C_1, C_2>0$ such that $C_1 A\leq B\leq C_2 A$.

\vskip2mm

\noindent\textbf{Proof of Theorem \ref{thm0201}.} Since integrals in (\ref{equ205B}) increase with $r$, a.s., it follows from the monotone convergence theorem and Fubini's theorem that
\begin{equation}\label{equ206}
\begin{aligned}
\mathbb{E}\left(\|\mathcal{R}F\|_{p}^p\right)&=\int_{\Omega}\left(\sup_{0<r<1}\int_{\mathbb{T}^\infty}|\mathcal{R}_{[r]}F|^p\mathrm{d}m_\infty\right)\mathrm{d}\mathbb{P}\\
&=\sup_{0<r<1}\int_{\Omega}\int_{\mathbb{T}^\infty}|\mathcal{R}_{[r]}F|^p\mathrm{d}m_\infty\mathrm{d}\mathbb{P}\\
&=\sup_{0<r<1}\int_{\mathbb{T}^\infty}\int_{\Omega}|\mathcal{R}_{[r]}F|^p\mathrm{d}\mathbb{P}\mathrm{d}m_\infty.
\end{aligned}
\end{equation}
For a fixed $r$ and $w\in\mathbb{T}^\infty$, write $e_n(w)=a_n r^{|\alpha(n)|_{*}}w^{\alpha(n)}$.
Then $\sum_{n=1}^\infty e_n(w)X_n$ converges to $(\mathcal{R}_{[r]}F)(w)$ a.s., and the Khintchine-Kahane inequality \cite[Lemma 11]{CFL} implies that
$$\int_{\Omega}|\mathcal{R}_{[r]}F|^p\mathrm{d}\mathbb{P}\sim\left(\int_{\Omega}|\mathcal{R}_{[r]}F|^2\mathrm{d}\mathbb{P}\right)^{\frac{p}{2}}=\left(\sum_{n=1}^\infty|a_n|^2 r^{2|\alpha(n)|_{*}}\right)^{\frac{p}{2}}.$$
Combining this with (\ref{equ206}) yields that
$$\mathbb{E}\left(\|\mathcal{R}F\|_{p}^p\right)\sim\sup_{0<r<1}\left(\sum_{n=1}^\infty|a_n|^2 r^{2|\alpha(n)|_{*}}\right)^{\frac{p}{2}}=\left(\sum_{n=1}^\infty|a_n|^2\right)^{\frac{p}{2}}.$$
Therefore, $\mathbb{E}\left(\|\mathcal{R}F\|_{p}^p\right)$ is finite if and only if $F\in\mathbf{H}_\infty^2$.

(1) If $F\in\mathbf{H}_\infty^2$, it will be shown that $\mathcal{R}F$ is analytic on $\mathbb{D}_1^\infty$ and $\|\mathcal{R}F\|_p$ is finite for $1\leq p<\infty$ a.s..
Indeed, the series $\sum_{n=1}^\infty a_n z^n$ defines a function in $H^2(\mathbb{D})$.
Hence by Theorem \ref{thm:mainA}, $\sum_{n=1}^\infty a_n X_nz^n$ defines a function in $H^2(\mathbb{D})$ a.s.. Therefore, $\{a_n X_n\}_{n=1}^\infty\in\ell^2$ a.s., and thus $\mathcal{R}F$ is analytic on $\mathbb{D}_1^\infty$ a.s..
On the other hand, since $F\in\mathbf{H}_\infty^2$, $\mathbb{E}(\|\mathcal{R}F\|_{p}^p)$ is finite, which reveals that $\|\mathcal{R}F\|_p$ is finite a.s..

(2) If $F\notin\mathbf{H}_\infty^2$, then $\mathbb{E}(\|\mathcal{R}F\|_{p}^p)$ is infinite.
By Proposition \ref{thm0203}, for each $1\leq p<\infty$, $\|\mathcal{R}F\|_p$ is not a.s. finite.
Since $\|\mathcal{R}F\|_p<\infty$ is a tail event, Kolmogorov's zero-one law implies that $\|\mathcal{R}F\|_p=\infty$ a.s., and hence $\mathcal{R}F\notin\mathbf{H}_\infty^p$ a.s.. $\hfill\square$

\section{Littlewood-type theorems for Gaussian processes}

We have proved in the previous section that when $\{X_n\}_{n=1}^\infty$ is a standard Gaussian $N(0,1)$ sequence and $1\leq p<\infty$, the randomization $\mathcal{R}F$ of each $F\in\mathbf{H}_\infty^2$ lies in $\mathbf{H}_\infty^p$ a.s..
This section will discuss cases of general Gaussian processes.
A sequence of real Gaussian variables $\bm{X}=\{X_n\}_{n=1}^\infty$ is said to be a Gaussian process, if $(X_{n_1},\ldots,X_{n_{k}})$ is a Gaussian random vector for every $k$ and $n_1,\ldots,n_k\in\mathbb{N}$.
Furthermore, a Gaussian process $\bm{X}$ is said to be centered, if $\mathbb{E}X_n=0$ for all $n$.
In this section, we will no longer assume that $\{X_n\}_{n=1}^\infty$ are independent.

\subsection{Cases associated with centered Gaussian processes}

We start with centered Gaussian processes and refer readers to \cite{CFGL} for one-variable setting.
The first main result in this subsection is stated as follows, which is a generalization of Theorem \ref{thm:mainB} (1).

\begin{thm}\label{thm0301}
Let $1\leq p<\infty$ and $\bm{X}$ be a centered Gaussian process with covariance matrix $\mathbf{K}=\left(\mathbb{E}(X_m X_n)\right)_{m,n\geq1}$. If $\mathbf{K}$ is bounded on $\ell^2$, then $\mathcal{R}$ defines a bounded linear operator from $\mathbf{H}_\infty^2$ to $L^2(\Omega,\mathbf{H}_\infty^p)$. In particular, if $F\in\mathbf{H}_\infty^2$, then $\mathcal{R}F\in\mathbf{H}_\infty^p$ a.s..
\end{thm}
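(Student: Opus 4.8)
The plan is to reduce Theorem~\ref{thm0301} to the computation of $\mathbb{E}(\|\mathcal{R}F\|_p^p)$ exactly as in the proof of Theorem~\ref{thm0201}, but now keeping track of the covariance structure. First I would fix $F\in\mathbf{H}_\infty^2$ with monomial expansion $F(\zeta)=\sum_n a_n\zeta^{\alpha(n)}$, so $\{a_n\}\in\ell^2$ with $\|\{a_n\}\|_{\ell^2}=\|F\|_2$. For $0<r<1$ the dilation $\mathcal{R}_{[r]}F$ is a finite-variance Gaussian field on $\mathbb{T}^\infty$ (indexed by $w$); I would first check it is almost surely a genuine element of $\mathbf{A}_\infty$. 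The cleanest route is the argument already used for Proposition~\ref{thm0200A}: since $\mathbf{K}$ is bounded on $\ell^2$, each $X_n$ has variance $\le\|\mathbf{K}\|$, so $\mathbb{E}|X_n|\le C$ uniformly, and the Borel--Cantelli reasoning of Lemma~\ref{thm0202A} (which is stated for not-necessarily-independent Gaussian sequences) still gives $|X_n|^{1/|\alpha(n)|_*}\to1$ a.s.; then Lemma~\ref{thm0201A} shows $\sum_n|a_n||X_n|r^{|\alpha(n)|_*}<\infty$ a.s. as before. Hence the partial sums $S_N\mathcal{R}_{[r]}F$ converge uniformly on $\mathbb{T}^\infty$ a.s., and in particular in $\mathbf{H}_\infty^p$.

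Next I would estimate the $L^p(\Omega)$-norm at a point $w\in\mathbb{T}^\infty$. Fix $r$ and $w$ and set $e_n(w)=a_n r^{|\alpha(n)|_*}w^{\alpha(n)}$, so $(\mathcal{R}_{[r]}F)(w)=\sum_n e_n(w)X_n$, a Gaussian random variable with
$$\mathbb{E}\bigl|(\mathcal{R}_{[r]}F)(w)\bigr|^2=\sum_{m,n}e_m(w)\overline{e_n(w)}\,\mathbb{E}(X_mX_n)=\langle \mathbf{K} v_w,v_w\rangle,$$
where $v_w=\{e_n(w)\}_n\in\ell^2$. Boundedness of $\mathbf{K}$ gives $\mathbb{E}|(\mathcal{R}_{[r]}F)(w)|^2\le\|\mathbf{K}\|\sum_n|e_n(w)|^2=\|\mathbf{K}\|\sum_n|a_n|^2r^{2|\alpha(n)|_*}$. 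For a real Gaussian scalar all $L^p$ norms are comparable to the $L^2$ norm, so
$$\int_\Omega|\mathcal{R}_{[r]}F|^p\,\mathrm{d}\mathbb{P}\lesssim_p\Bigl(\mathbb{E}|(\mathcal{R}_{[r]}F)(w)|^2\Bigr)^{p/2}\le\|\mathbf{K}\|^{p/2}\Bigl(\sum_n|a_n|^2r^{2|\alpha(n)|_*}\Bigr)^{p/2}.$$
Then, exactly as in (\ref{equ206}) — monotonicity of the radial integrals (from Proposition~\ref{thm0200A}), monotone convergence, and Fubini — I get
$$\mathbb{E}\bigl(\|\mathcal{R}F\|_p^p\bigr)=\sup_{0<r<1}\int_{\mathbb{T}^\infty}\int_\Omega|\mathcal{R}_{[r]}F|^p\,\mathrm{d}\mathbb{P}\,\mathrm{d}m_\infty\lesssim_p\|\mathbf{K}\|^{p/2}\Bigl(\sum_n|a_n|^2\Bigr)^{p/2}=\|\mathbf{K}\|^{p/2}\|F\|_2^p.$$

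From here the conclusion is essentially bookkeeping. The displayed bound shows $\mathcal{R}F\in\mathbf{H}_\infty^p$ a.s.\ (its $\mathbf{H}_\infty^p$-norm has a finite $p$-th moment, hence is a.s.\ finite); this also requires noting that $\mathcal{R}F$ is genuinely analytic on $\mathbb{D}_1^\infty$ a.s., which again follows because $\{a_nX_n\}\in\ell^2$ a.s.\ (its expected $\ell^2$-norm squared is $\sum_n|a_n|^2\mathbb{E}X_n^2\le\|\mathbf{K}\|\|F\|_2^2<\infty$), so that $\mathcal{R}F\in\mathbf{H}_\infty^2\subset\mathbf{H}_\infty^p$ by the monomial-coefficient characterization of $\mathbf{H}_\infty^2$ quoted before the proof of Theorem~\ref{thm0201}. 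Finally, for the operator statement: $\mathcal{R}$ is linear on polynomials, $\|\mathcal{R}F\|_{L^2(\Omega,\mathbf{H}_\infty^p)}^2=\mathbb{E}\|\mathcal{R}F\|_p^2\le\bigl(\mathbb{E}\|\mathcal{R}F\|_p^p\bigr)^{2/p}$ when $p\ge2$ (and directly when $p\le2$ via Jensen/monotonicity of $L^p(\mathbb{T}^\infty)$-norms), so in all cases $\|\mathcal{R}F\|_{L^2(\Omega,\mathbf{H}_\infty^p)}\le C_p\|\mathbf{K}\|^{1/2}\|F\|_2$; a density argument over $\mathcal{P}_\infty$ (dense in $\mathbf{H}_\infty^2$) extends $\mathcal{R}$ to a bounded operator $\mathbf{H}_\infty^2\to L^2(\Omega,\mathbf{H}_\infty^p)$. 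The main obstacle I anticipate is the a.s.\ analyticity/convergence step: without independence one cannot invoke Kolmogorov-type three-series arguments, so one must genuinely rely on the fact that Lemma~\ref{thm0202A} holds for dependent Gaussian sequences (uniform variance bound plus Borel--Cantelli), combined with the root-test Lemma~\ref{thm0201A}; everything downstream is then a routine repetition of the Section~2 machinery with $\|\mathbf{K}\|$ inserted.
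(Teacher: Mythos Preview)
Your argument is correct and the core computation (bounding $\int_\Omega|\mathcal{R}_{[r]}F(w)|^p\,\mathrm{d}\mathbb{P}$ via scalar Gaussian moment comparison and $\langle\mathbf{K}v_w,v_w\rangle\le\|\mathbf{K}\|\|v_w\|^2$, then taking the supremum in $r$) matches the paper exactly. The one genuine difference is how you pass from the $L^p(\Omega)$ bound to the required $L^2(\Omega)$ bound. The paper does this uniformly in $p$: it first shows (Lemma~\ref{thm0302}) that $\mathcal{R}_{[r]}F$ is a centered Gaussian \emph{vector} in the Banach space $L^p(\mathbb{T}^\infty)$ and then invokes the Banach-space Gaussian moment inequality~(\ref{equ301}) to get $\|\mathcal{R}_{[r]}F\|_{L^2(\Omega,\mathbf{H}_\infty^p)}\le K_{2,p}\|\mathcal{R}_{[r]}F\|_{L^p(\Omega,\mathbf{H}_\infty^p)}$. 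You instead split cases: for $p\ge2$ you use H\"older on $\Omega$, and for $p\le2$ you bypass the whole $L^p$ computation via $\|\mathcal{R}F\|_p\le\|\mathcal{R}F\|_2$ and the direct bound $\mathbb{E}\|\mathcal{R}F\|_2^2=\sum_n|a_n|^2\mathbb{E}X_n^2\le\|\mathbf{K}\|\|F\|_2^2$. Your route is more elementary (no Banach-space Gaussian theory, no Lemma~\ref{thm0302}) at the cost of a non-uniform argument; the paper's route is cleaner but uses heavier machinery. Two small remarks: the density argument over $\mathcal{P}_\infty$ at the end is unnecessary, since your estimate already holds for every $F\in\mathbf{H}_\infty^2$; and the paper establishes a.s.\ analyticity of $\mathcal{R}F$ by quoting the one-variable Theorem~\ref{thm:mainB} rather than your (also valid) direct $\ell^2$ computation.
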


To prove this theorem, we recall that if $\{Z_n\}_{n=1}^\infty$ is a sequence of centered Gaussian variables which converges to $Z$ a.s., then $Z$ is also centered Gaussian \cite[pp.3]{Le}.
Also, it is worth introducing some notions of centered Gaussian vectors in Banach spaces, see \cite[Section 3.1]{LT}.
Let $\mathcal{X}$ be a separable Banach space and $\mathcal{X}^{*}$ its dual space.
We say that $X: \Omega\rightarrow\mathcal{X}$ is centered Gaussian, if for each $\varphi\in\mathcal{X}^{*}$, $L_\varphi(X)=\varphi(X)$ is centered Gaussian.
A significant result is that if $1\leq p,q<\infty$, then there is a constant $K_{p,q}>0$ such that for each centered Gaussian vector $X: \Omega\rightarrow\mathcal{X}$,
\begin{equation}\label{equ301}
\|X\|_{L^p(\Omega,\mathcal{X})}\leq K_{p,q}\|X\|_{L^q(\Omega,\mathcal{X})}.
\end{equation}
The following lemma gives some examples of centered Gaussian vectors in $L^p(\mathbb{T}^\infty)\;(1\leq p<\infty)$.

\begin{lem}\label{thm0302}
Let $\bm{X}=\{X_n\}_{n=1}^\infty$ be a centered Gaussian process.
Assume $1\leq p<\infty$ and $F$ is an analytic function on $\mathbb{D}_1^\infty$.

(1) If $\mathcal{R}F$ is analytic on $\mathbb{D}_1^\infty$ a.s., then for each $0<r<1$, $\mathcal{R}_{[r]}F$ is centered Gaussian in $L^p(\mathbb{T}^\infty)$;

(2) If $\mathcal{R}F\in \mathbf{H}_\infty^p$ a.s., then its ``boundary function"
$$(\mathcal{R}F)^{*}(w)=\lim_{r\rightarrow1}(\mathcal{R}_{[r]}F)(w),\quad w\in\mathbb{T}^\infty$$
is centered Gaussian in $L^{p}(\mathbb{T}^\infty)$.
\end{lem}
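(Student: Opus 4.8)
The plan is to view $\mathcal{R}_{[r]}F$, for each fixed $0<r<1$, as a random vector with values in the separable Banach space $L^p(\mathbb{T}^\infty)$ (separable since $\mathbb{T}^\infty$ is a compact metrizable group carrying a finite Haar measure), and to exhibit both $\mathcal{R}_{[r]}F$ and the boundary function $(\mathcal{R}F)^{*}$ as almost sure $L^p$-limits of bona fide centered Gaussian vectors. The two facts I will lean on are: (i) a finite linear combination, with deterministic Banach-space-valued coefficients, of jointly centered Gaussian real random variables is a centered Gaussian vector (for each $\varphi\in L^p(\mathbb{T}^\infty)^{*}$ one tests and splits into real and imaginary parts); and (ii) an a.s.\ limit of centered Gaussian vectors in a separable Banach space is again centered Gaussian, which by the definition of Gaussian vector recalled above reduces, via the scalar functionals $\varphi$, to the one-dimensional statement \cite[pp.3]{Le} applied to real and imaginary parts.

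For (1): first I would record that, since $\mathcal{R}F$ is analytic on $\mathbb{D}_1^\infty$ a.s., its monomial expansion $\sum_{n\geq1}a_nX_n\zeta^{\alpha(n)}$ converges uniformly and absolutely on the compact set $r\mathbb{T}\times r^2\mathbb{T}\times\cdots\subset\mathbb{D}_1^\infty$, exactly as was noted for $F$ itself after Theorem \ref{thm0201}; hence $\sum_{n\geq1}|a_n||X_n|r^{|\alpha(n)|_{*}}<\infty$ a.s. Consequently the partial sums $S_N\mathcal{R}_{[r]}F=\sum_{n=1}^{N}a_nX_nr^{|\alpha(n)|_{*}}w^{\alpha(n)}$ converge to $\mathcal{R}_{[r]}F$ uniformly on $\mathbb{T}^\infty$, and a fortiori in $L^p(\mathbb{T}^\infty)$, a.s., while $\mathcal{R}_{[r]}F\in\mathbf{A}_\infty\subset L^p(\mathbb{T}^\infty)$ a.s.; being an a.s.\ $L^p$-limit of strongly measurable maps, $\mathcal{R}_{[r]}F:\Omega\to L^p(\mathbb{T}^\infty)$ is itself strongly measurable. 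Next, each $S_N\mathcal{R}_{[r]}F$ is a finite combination of the jointly Gaussian $X_1,\dots,X_N$ with deterministic $L^p(\mathbb{T}^\infty)$-valued coefficients $w\mapsto a_nr^{|\alpha(n)|_{*}}w^{\alpha(n)}$, hence a centered Gaussian vector by (i). Letting $N\to\infty$ and applying (ii) yields that $\mathcal{R}_{[r]}F$ is centered Gaussian in $L^p(\mathbb{T}^\infty)$.

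For (2): since $\mathcal{R}F\in\mathbf{H}_\infty^p$ a.s.\ forces $\mathcal{R}F$ to be analytic on $\mathbb{D}_1^\infty$ a.s., part (1) already gives that every $\mathcal{R}_{[r]}F$ is centered Gaussian in $L^p(\mathbb{T}^\infty)$. Then I would invoke the isometric isomorphism $\mathbf{H}_\infty^p\cong H^p(\mathbb{T}^\infty)$ together with the norm convergence $\|G_{[r]}-G^{*}\|_{L^p(\mathbb{T}^\infty)}\to0$ as $r\to1$ for $G\in\mathbf{H}_\infty^p$ (recalled after Theorem \ref{thm0201}; see \cite{AOS,DG}), applied to $G=\mathcal{R}F$: a.s., $\mathcal{R}_{[r]}F\to(\mathcal{R}F)^{*}$ in $L^p(\mathbb{T}^\infty)$ as $r\to1$. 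Thus $(\mathcal{R}F)^{*}$ is an a.s.\ limit of centered Gaussian vectors, hence centered Gaussian by (ii), which is the claim. If one prefers to avoid an uncountable almost sure event, it suffices to run this along a fixed sequence $r_m\to1$.

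The part I expect to be the genuine, if modest, obstacle is not the Gaussian bookkeeping but the Step-1-type points: verifying that $\mathcal{R}_{[r]}F$ really is a well-defined $L^p(\mathbb{T}^\infty)$-valued random vector which is the norm-limit of its partial sums — this is exactly where the standing hypothesis (a.s.\ analyticity, resp.\ a.s.\ membership in $\mathbf{H}_\infty^p$) cannot be dispensed with — together with the care needed to apply the ``a.s.\ limit of centered Gaussian vectors is centered Gaussian'' principle correctly in the complex, infinitely-many-variable setting, i.e.\ by testing against $\varphi\in L^p(\mathbb{T}^\infty)^{*}$, splitting into real and imaginary parts, and using separability of $L^p(\mathbb{T}^\infty)$.
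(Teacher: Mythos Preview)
Your proposal is correct and follows essentially the same route as the paper: both use the a.s.\ analyticity of $\mathcal{R}F$ to get uniform (hence $L^p$) convergence of the partial sums $S_N\mathcal{R}_{[r]}F$ on $\mathbb{T}^\infty$, identify these partial sums as centered Gaussian, and pass to the limit via the scalar fact that a.s.\ limits of centered Gaussians are centered Gaussian; for (2) both invoke the $L^p$-convergence $\mathcal{R}_{[r]}F\to(\mathcal{R}F)^{*}$. The only cosmetic difference is that the paper tests directly against $g\in L^q(\mathbb{T}^\infty)$ and forms the scalar series $\sum_n Y_{r,n}$, whereas you first package the partial sums as $L^p$-valued Gaussian vectors and then test against $\varphi\in L^p(\mathbb{T}^\infty)^{*}$---the same argument in slightly different clothing.
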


\begin{proof}
Let $q$ be the conjugate of $p$. Then $X: \Omega\rightarrow L^{p}(\mathbb{T}^\infty)$ is centered Gaussian if and only if for every $g\in L^{q}(\mathbb{T}^\infty)$,
$$L_g(X)=\int_{\mathbb{T}^\infty} Xg\mathrm{d}m_\infty$$
is centered Gaussian.

(1) For every $g\in L^{q}(\mathbb{T}^\infty)$ and $0<r<1$, write
$$Y_{r,n}=a_n X_n r^{|\alpha(n)|_{*}}\int_{\mathbb{T}^\infty}w^{\alpha(n)}g(w)\mathrm{d}m_\infty(w).$$
Then each $Y_{r,n}$ is centered Gaussian.
Since $\mathcal{R}F$ is analytic on $\mathbb{D}_1^\infty$ a.s., the series in (\ref{equ302}) below converges uniformly on $\mathbb{T}^\infty$ a.s..
Therefore, by exchanging the order of integration and summation, we see that
\begin{equation}\label{equ302}
L_g(\mathcal{R}_{[r]}F)=\int_{\mathbb{T}^\infty}\left[\sum_{n=1}^\infty a_n X_n r^{|\alpha(n)|_{*}} w^{\alpha(n)}\right]g(w)\mathrm{d}m_\infty=\sum_{n=1}^\infty Y_{r,n} \quad\mathrm{a.s.},
\end{equation}
and hence $L_g(\mathcal{R}_{[r]}F)$ is centered Gaussian.
Then $\mathcal{R}_{[r]}F$ is centered Gaussian in $L^p(\mathbb{T}^\infty)$, as desired.

(2) If $\mathcal{R}F\in \mathbf{H}_\infty^p$ a.s., then $\mathcal{R}_{[r]}F$ converges to $(\mathcal{R}F)^{*}$ in $L^p$-norm as $r\rightarrow1$ a.s., which implies that for every $g\in L^{q}(\mathbb{T}^\infty)$,
$$L_g((\mathcal{R}F)^{*})=\lim_{r\rightarrow1}L_g(\mathcal{R}_{[r]}F)\quad\mathrm{a.s.}.$$
As shown in (1), $L_g(\mathcal{R}_{[r]}F)$ is centered Gaussian. Then $L_g((\mathcal{R}F)^{*})$ is also centered Gaussian, and hence $(\mathcal{R}F)^{*}$ is centered Gaussian in $L^{p}(\mathbb{T}^\infty)$.
\end{proof}

Now we present the proof of Theorem \ref{thm0301}.

\vskip2mm

\noindent\textbf{Proof of Theorem \ref{thm0301}.} Since $\mathbf{K}$ is bounded on $\ell^2$, it follows from Theorem \ref{thm:mainB} that for each $\{a_n\}_{n=1}^\infty\in\ell^2$, $\{a_n X_n\}_{n=1}^\infty\in\ell^2$ a.s.. Therefore, for every $F\in\mathbf{H}_\infty^2$, $\mathcal{R}F$ is analytic on $\mathbb{D}_1^\infty$ a.s..
Since $\|\mathcal{R}F\|_p=\lim_{r\rightarrow1}\|\mathcal{R}_{[r]}F\|_p$, we see from Fatou's lemma that
$$\|\mathcal{R}F\|_{L^2(\Omega,\mathbf{H}_\infty^p)}\leq\liminf_{r\rightarrow1}\|\mathcal{R}_{[r]}F\|_{L^2(\Omega,\mathbf{H}_\infty^p)}.$$
As proved in Lemma \ref{thm0302}, for each $0<r<1$, $\mathcal{R}_{[r]}F$ is centered Gaussian in $L^p(\mathbb{T}^\infty)$.
Hence by (\ref{equ301}), $\|\mathcal{R}_{[r]}F\|_{L^2(\Omega,\mathbf{H}_\infty^p)}\leq K_{2,p}\|\mathcal{R}_{[r]}F\|_{L^p(\Omega,\mathbf{H}_\infty^p)}$, and thus
\begin{equation}\label{equ303}
\|\mathcal{R}F\|_{L^2(\Omega,\mathbf{H}_\infty^p)}\leq K_{2,p}\liminf_{r\rightarrow1}\|\mathcal{R}_{[r]}F\|_{L^p(\Omega,\mathbf{H}_\infty^p)}.
\end{equation}
Now we are going to give an estimate of $\|\mathcal{R}_{[r]}F\|_{L^p(\Omega,\mathbf{H}_\infty^p)}$. By Fubini's theorem,
\begin{equation}\label{equ304}
\|\mathcal{R}_{[r]}F\|_{L^p(\Omega,\mathbf{H}_\infty^p)}^p=\int_{\Omega}\int_{\mathbb{T}^\infty}|\mathcal{R}_{[r]}F|^p\mathrm{d}m_\infty\mathrm{d}\mathbb{P}
=\int_{\mathbb{T}^\infty}\int_{\Omega}|\mathcal{R}_{[r]}F|^p\mathrm{d}\mathbb{P}\mathrm{d}m_\infty.
\end{equation}
For every fixed $w\in\mathbb{T}^\infty$, the series
\begin{equation}\label{equ305}
(\mathcal{R}_{[r]}F)(w)=\sum_{n=1}^\infty a_n X_n r^{|\alpha(n)|_{*}}w^{\alpha(n)}
\end{equation}
converges, and hence it is centered Gaussian.
Then by (\ref{equ304}),
$$\|\mathcal{R}_{[r]}F\|_{L^p(\Omega,\mathbf{H}_\infty^p)}^p=\int_{\mathbb{T}^\infty}\int_{\Omega}|\mathcal{R}_{[r]}F|^p\mathrm{d}\mathbb{P}\mathrm{d}m_\infty
\leq K_{p,2}^p\int_{\mathbb{T}^\infty}\left(\int_{\Omega}|\mathcal{R}_{[r]}F|^2\mathrm{d}\mathbb{P}\right)^{\frac{p}{2}}\mathrm{d}m_\infty,$$
where the inequality follows from (\ref{equ301}).
Combining this with (\ref{equ303}) and (\ref{equ305}) yields that
\begin{equation}\label{equ306}
\|\mathcal{R}F\|_{L^2(\Omega,\mathbf{H}_\infty^p)}^p\leq C_1\liminf_{r\rightarrow1}\int_{\mathbb{T}^\infty}\left(\int_{\Omega}\left|\sum_{n=1}^\infty a_n X_n r^{|\alpha(n)|_{*}}w^{\alpha(n)}\right|^2\mathrm{d}\mathbb{P}\right)^{\frac{p}{2}}\mathrm{d}m_\infty(w),
\end{equation}
where $C_1=(K_{p,2}K_{2,p})^p$.
Since $\mathbf{K}$ is bounded on $\ell^2$, $\{X_n\}_{n=1}^\infty$ is a Bessel sequence in the Hilbert space $L^2(\Omega)$ \cite[Lemma 3.5.1]{Chr}.
Hence by \cite[Theorem 3.2.3]{Chr}, there is a constant $C_2>0$ such that for every $w\in\mathbb{T}^\infty$,
$$\int_{\Omega}\left|\sum_{n=1}^\infty a_n X_n r^{|\alpha(n)|_{*}}w^{\alpha(n)}\right|^2\mathrm{d}\mathbb{P}
\leq C_2\sum_{n=1}^\infty|a_n|^2r^{2|\alpha(n)|_{*}}.$$
Substituting this into (\ref{equ306}) yields
$$\|\mathcal{R}F\|_{L^2(\Omega,\mathbf{H}_\infty^p)}\leq C_1^{\frac{1}{p}}C_2^{\frac{1}{2}}\liminf_{r\rightarrow1}\left(\sum_{n=1}^\infty|a_n|^2r^{2|\alpha(n)|_{*}}\right)^{\frac{1}{2}}=C_1^{\frac{1}{p}}C_2^{\frac{1}{2}}\|F\|_2,$$
which completes the proof. $\hfill\square$

\vskip2mm

Assume that $1\leq p<\infty$. It is clear that if $\mathcal{R}$ defines a bounded linear operator from $\mathbf{H}_\infty^2$ to $L^2(\Omega,\mathbf{H}_\infty^p)$, then $F\in\mathbf{H}_\infty^2$ implies that $\mathcal{R}F\in\mathbf{H}_\infty^p$ a.s..
In what follows, we give its converse for centered Gaussian processes, which is the infinite-variable setting of Theorem \ref{thm:mainB} (2).

\begin{thm}\label{thm0303}
Let $1\leq p<\infty$ and $\bm{X}$ be a centered Gaussian process. If for every $F\in\mathbf{H}_\infty^2$, $\mathcal{R}F\in\mathbf{H}_\infty^p$ a.s., then $\mathcal{R}$ defines a bounded linear operator from $\mathbf{H}_\infty^2$ to $L^2(\Omega,\mathbf{H}_\infty^p)$.
\end{thm}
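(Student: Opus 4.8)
The plan is to invoke the closed graph theorem. Both $\mathbf{H}_\infty^2$ and $L^2(\Omega,\mathbf{H}_\infty^p)$ are Banach spaces, so it suffices to show that $\mathcal{R}$ is a well-defined closed linear operator between them. The first task is to verify that $\mathcal{R}F$ actually defines an element of $L^2(\Omega,\mathbf{H}_\infty^p)$ for every $F\in\mathbf{H}_\infty^2$. By hypothesis $\mathcal{R}F\in\mathbf{H}_\infty^p$ a.s., so in particular $\mathcal{R}F$ is analytic on $\mathbb{D}_1^\infty$ a.s.; then Lemma \ref{thm0302}(2) shows that the boundary function $(\mathcal{R}F)^{*}$ is centered Gaussian in $L^p(\mathbb{T}^\infty)$. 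Identifying $\mathbf{H}_\infty^p$ with $H^p(\mathbb{T}^\infty)\subset L^p(\mathbb{T}^\infty)$ via the isometric isomorphism $F\mapsto F^{*}$, we may regard $\mathcal{R}F$ as a centered Gaussian random vector with values in the separable Banach space $\mathbf{H}_\infty^p$. By Fernique's theorem (or the integrability inequality \eqref{equ301}), a centered Gaussian vector that is finite a.s. automatically has finite moments of all orders; in particular $\|\mathcal{R}F\|_{L^2(\Omega,\mathbf{H}_\infty^p)}<\infty$, so $\mathcal{R}F\in L^2(\Omega,\mathbf{H}_\infty^p)$ and $\mathcal{R}$ is a well-defined linear map.

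Next I would check that the graph of $\mathcal{R}$ is closed. Suppose $F_k\to F$ in $\mathbf{H}_\infty^2$ and $\mathcal{R}F_k\to G$ in $L^2(\Omega,\mathbf{H}_\infty^p)$. Writing $F=\sum a_n\zeta^{\alpha(n)}$ and $F_k=\sum a_n^{(k)}\zeta^{\alpha(n)}$, convergence in $\mathbf{H}_\infty^2$ forces $a_n^{(k)}\to a_n$ for each fixed $n$, since the coefficient functionals are bounded on $\mathbf{H}_\infty^2$ (recall $\|\cdot\|_2$ is the $\ell^2$ norm of the coefficient sequence). On the other hand, convergence in $L^2(\Omega,\mathbf{H}_\infty^p)$ implies convergence along a subsequence a.s. in $\mathbf{H}_\infty^p$, and hence, after applying the coefficient functional on $\mathbf{H}_\infty^p$ and passing to a further subsequence, one gets $a_n^{(k)}X_n\to$ (the $n$-th coefficient of $G$) a.s. for each $n$. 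Comparing the two limits gives that the $n$-th coefficient of $G$ equals $a_n X_n$ a.s., i.e. $G=\mathcal{R}F$ as an element of $L^2(\Omega,\mathbf{H}_\infty^p)$. Therefore the graph is closed and the closed graph theorem yields boundedness of $\mathcal{R}$.

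The main obstacle I anticipate is the well-definedness step rather than the closedness step: one must be careful that ``$\mathcal{R}F\in\mathbf{H}_\infty^p$ a.s.'' genuinely produces a $\mathbf{H}_\infty^p$-valued \emph{random vector} in the measurable sense, so that the Gaussian-vector machinery (Lemma \ref{thm0302} and the moment comparison \eqref{equ301}) applies. This is handled by noting that each partial sum $S_n\mathcal{R}F$ is a continuous (indeed polynomial-valued) random vector, that $S_n\mathcal{R}F\to\mathcal{R}F$ in $\mathbf{H}_\infty^p$ a.s. whenever $\mathcal{R}F\in\mathbf{H}_\infty^p$, and hence $\mathcal{R}F$ is a.s. the limit of measurable $\mathbf{H}_\infty^p$-valued maps and so is itself measurable; the Gaussianity of finite-dimensional marginals then upgrades via Lemma \ref{thm0302}(2) to Gaussianity as a Banach-space-valued vector. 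Once this is in place, the rest is routine functional analysis. An alternative to the closed graph argument, which avoids the subsequence bookkeeping, is to combine the a.s. finiteness of $\|\mathcal{R}F\|_p$ with the equivalence of Gaussian moments to get that $F\mapsto\|\mathcal{R}F\|_{L^2(\Omega,\mathbf{H}_\infty^p)}$ is a finite seminorm on $\mathbf{H}_\infty^2$, and then invoke a Baire-category / uniform-boundedness argument on the sets $\{F:\|\mathcal{R}F\|_{L^2(\Omega,\mathbf{H}_\infty^p)}\le N\}$; but the closed graph route is cleanest.
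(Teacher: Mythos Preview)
Your proof is essentially correct and takes a genuinely different route from the paper. Both arguments share the same key step---using Lemma~\ref{thm0302}(2) together with Fernique/Gaussian integrability to show that $\mathcal{R}F\in L^2(\Omega,\mathbf{H}_\infty^p)$ for every $F\in\mathbf{H}_\infty^2$---but diverge from there. The paper applies the uniform boundedness principle to the one-parameter family $\{\mathcal{R}_{[r]}\}_{0<r<1}$, which requires first proving that each dilation operator $\mathcal{R}_{[r]}$ is bounded; this in turn rests on a separate technical lemma (Lemma~\ref{thm0304}) controlling $\sum_n\sigma_n^2 r^{2|\alpha(n)|_*}$, and then a dominated-convergence argument to show $\mathcal{R}_{[r]}F\to\mathcal{R}F$ in $L^2(\Omega,\mathbf{H}_\infty^p)$. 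Your closed-graph argument bypasses Lemma~\ref{thm0304} entirely: once well-definedness is in hand, closedness follows from elementary coefficient comparison, since the coefficient functionals are continuous on both $\mathbf{H}_\infty^2$ and $\mathbf{H}_\infty^p$. This is shorter and more robust.

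One small correction: in your measurability discussion you assert that $S_n\mathcal{R}F\to\mathcal{R}F$ in $\mathbf{H}_\infty^p$ a.s.\ whenever $\mathcal{R}F\in\mathbf{H}_\infty^p$. This is not clear in general (partial sums need not converge in $H^p$-norm when $p=1$, and in infinitely many variables the question is delicate). The fix is immediate: replace the partial sums by the radial dilations $\mathcal{R}_{[r]}F$, which do converge to $\mathcal{R}F$ in $\mathbf{H}_\infty^p$ a.s.\ whenever $\mathcal{R}F\in\mathbf{H}_\infty^p$, and which are themselves a.s.\ uniform limits of their partial sums (Proposition~\ref{thm0200A}), hence measurable. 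With this adjustment your argument goes through cleanly.
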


To prove this theorem, we need the following technical lemma.

\begin{lem}\label{thm0304}
Let $\bm{X}$ be a centered Gaussian process with $\mathrm{Var}(X_n)=\sigma_n^2$. If for every $\{a_n\}_{n=1}^\infty\in\ell^2$, the power series $\sum_{n=1}^\infty a_n X_n \zeta^{\alpha(n)}$ defines an analytic function on $\mathbb{D}_1^\infty$ a.s., then for every $0<r<1$,
$$\sum_{n=1}^\infty \sigma_n^2 r^{2|\alpha(n)|_{*}}<\infty.$$
\end{lem}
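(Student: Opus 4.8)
The plan is to argue by contradiction. Suppose $\sum_{n=1}^\infty \sigma_n^2 r_0^{2|\alpha(n)|_{*}} = \infty$ for some $r_0 \in (0,1)$. I would like to produce a sequence $\{a_n\}_{n=1}^\infty \in \ell^2$ for which the randomized power series $\sum_{n=1}^\infty a_n X_n \zeta^{\alpha(n)}$ fails to be analytic on $\mathbb{D}_1^\infty$ almost surely, contradicting the hypothesis. The natural obstruction to analyticity is a failure of local boundedness: it suffices to exhibit a single point $\zeta^{(0)} \in \mathbb{D}_1^\infty$ — a sensible choice is the sequence $\zeta^{(0)} = (r_0, r_0^2, \ldots, r_0^n, \ldots)$ (which lies in $\ell^1$) — at which the series $\sum_n a_n X_n (\zeta^{(0)})^{\alpha(n)} = \sum_n a_n X_n r_0^{|\alpha(n)|_{*}}$ diverges almost surely. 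Since $X_n$ is $N(0,\sigma_n^2)$, we have $a_n X_n r_0^{|\alpha(n)|_{*}} \sim N(0, a_n^2 \sigma_n^2 r_0^{2|\alpha(n)|_{*}})$.

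The mechanism is a Kolmogorov three-series type argument, but one must be careful because the $X_n$ are not assumed independent here. Instead I would invoke a second-moment / anti-concentration argument available for Gaussian processes: if $c_n \geq 0$ and $\sum_n c_n^2 = \infty$, then $\sum_n c_n |X_n / \sigma_n|$ — or more robustly, $\sum_n c_n X_n$ — cannot converge almost surely, because convergence of a Gaussian series would force the partial-sum variances (which are bounded below via the diagonal, using that a Gaussian vector's covariance is positive semidefinite so $\mathrm{Var}(\sum_{k=m}^{n} c_k X_k) \geq$ a controllable quantity — actually here one should use that the individual terms $c_n X_n$ being a convergent series forces $c_n X_n \to 0$ a.s., hence $c_n \sigma_n \to 0$ and, by Borel–Cantelli applied with the Gaussian tail bound \eqref{equ203}, $\sum_n \mathbb{P}(|c_n X_n| > \varepsilon) < \infty$, which after using \eqref{equ203} forces $\sum_n \exp(-\varepsilon^2/(2 c_n^2\sigma_n^2)) < \infty$, incompatible with $\sum_n c_n^2\sigma_n^2 = \infty$ when... ). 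So I would choose $a_n$ so that $a_n^2 \sigma_n^2 r_0^{2|\alpha(n)|_{*}}$ is, say, comparable to $\sigma_n^2 r_0^{2|\alpha(n)|_{*}} / (\sigma_n^2 r_0^{2|\alpha(n)|_{*}} \cdot (\text{something}))$ — concretely, set $a_n^2 = \min\{1, (\sigma_n^2 r_0^{2|\alpha(n)|_{*}})^{-1}\} \cdot 2^{-k}$-style weights grouped so that $\{a_n\} \in \ell^2$ while $\sum_n a_n^2 \sigma_n^2 r_0^{2|\alpha(n)|_{*}} = \infty$; this is possible precisely because $\sum_n \sigma_n^2 r_0^{2|\alpha(n)|_{*}} = \infty$ (a standard "divide the divergent series into blocks and downweight each block" construction).

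With such $\{a_n\} \in \ell^2$ fixed, set $c_n = a_n r_0^{|\alpha(n)|_{*}}$, so $\sum_n c_n^2 \sigma_n^2 = \infty$. The terms $c_n X_n$ form a centered Gaussian sequence; if $\sum_n c_n X_n$ converged a.s. then in particular $c_n X_n \to 0$ a.s., so by the second Borel–Cantelli direction — or more simply by noting $\mathbb{P}(|c_n X_n| \geq 1 \text{ i.o.}) $ must be $0$, whence $\sum_n \mathbb{P}(|c_n X_n| \geq 1) < \infty$ is needed at least along... — I would instead directly use: for a Gaussian $Y \sim N(0,\tau^2)$, $\mathbb{P}(|Y| \geq 1) \geq c\min\{1,\tau\}$ for an absolute $c > 0$ (anti-concentration, complementary to \eqref{equ203B}). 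Since $\sum_n c_n^2\sigma_n^2 = \infty$ we cannot have $c_n\sigma_n \to 0$ fast enough; more carefully, one splits into the case $\limsup c_n\sigma_n > 0$ (then $|c_nX_n|\geq 1$ infinitely often with positive probability, and by Kolmogorov's zero-one law — $\{\sum c_nX_n$ converges$\}$ is a tail event if we first pass to independent blocks, or one invokes that for Gaussian processes this event has probability $0$ or $1$) and the case $c_n\sigma_n \to 0$ with $\sum c_n^2\sigma_n^2 = \infty$, where Borel–Cantelli with the lower Gaussian bound gives divergence. In either case $\sum_n a_n X_n r_0^{|\alpha(n)|_{*}}$ diverges a.s., so $\mathcal{R}F$ is not even defined (let alone locally bounded) at $\zeta^{(0)}$, contradicting the hypothesis. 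Therefore $\sum_n \sigma_n^2 r_0^{2|\alpha(n)|_{*}} < \infty$, and since $r_0 \in (0,1)$ was arbitrary the conclusion follows.

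The main obstacle I anticipate is handling the lack of independence cleanly: the clean Kolmogorov three-series / zero-one law machinery presumes independence, so I expect the technical heart of the proof to be justifying the dichotomy "a centered Gaussian series either converges a.s. or diverges a.s." together with the quantitative anti-concentration bound $\mathbb{P}(|N(0,\tau^2)| \geq 1) \gtrsim \min\{1,\tau\}$ (dual to \eqref{equ203B}) and making the Borel–Cantelli estimate go through when $c_n\sigma_n$ may not tend to $0$. If the paper prefers, one can sidestep part of this by applying the earlier Lemma \ref{thm0302}(1): the hypothesis would make $\mathcal{R}_{[r]}F$ centered Gaussian in $L^p(\mathbb{T}^\infty)$ for the divergent choice of coefficients, and then evaluate/integrate against suitable test functions to extract the scalar contradiction — but the direct pointwise divergence at $\zeta^{(0)}$ seems the most transparent route.
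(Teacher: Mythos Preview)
Your approach---argue by contradiction, pick the point $\zeta^{(0)}=(r_0,r_0^2,\ldots)$, and manufacture an $\{a_n\}\in\ell^2$ making the random series diverge there---is natural, but the step you yourself flag as the main obstacle is in fact a genuine gap that you do not close. Both the second Borel--Cantelli lemma and Kolmogorov's zero-one law require independence, and the $X_n$ here are merely a centered Gaussian process. Your claim that ``$c_nX_n\to 0$ a.s.\ forces $\sum_n\mathbb P(|c_nX_n|\ge1)<\infty$'' is precisely the converse of first Borel--Cantelli, which fails without independence: with $Y_n\equiv Y\sim N(0,1)$ and $c_n\sigma_n=1/\sqrt{\log n}$ one has $c_nX_n\to0$ surely while $\sum_n\mathbb P(|c_nX_n|\ge1)=\infty$. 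The zero-one dichotomy you invoke for the convergence event is likewise unjustified in this generality, and the block construction of $\{a_n\}$ is only gestured at. As written, the argument does not establish divergence.

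The paper's proof takes a completely different, direct route built on Lemmas~\ref{thm0201A} and~\ref{thm0202A}, and the point is that Lemma~\ref{thm0202A} already handles dependence for free: for \emph{any} sequence of $N(0,1)$ variables one has $|Y_n|^{1/|\alpha(n)|_*}\to1$ a.s., because both the upper and the lower estimates there use only the \emph{first} Borel--Cantelli lemma. Given any $\{a_n\}\in\ell^2$, absolute convergence of the monomial series at $\zeta^{(0)}$ plus the root test (Lemma~\ref{thm0201A}) gives $\limsup_n|a_nX_n|^{1/|\alpha(n)|_*}\le 1/r_0$ a.s.; writing $X_n=\sigma_nY_n$ and applying Lemma~\ref{thm0202A} strips off the randomness to yield the \emph{deterministic} bound $\limsup_n|a_n\sigma_n|^{1/|\alpha(n)|_*}\le 1/r_0$. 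Now simply choose $a_n=r_0^{|\alpha(n)|_*}\in\ell^2$ and let $r_0\to1$ to get $\limsup_n\sigma_n^{1/|\alpha(n)|_*}\le1$, and the conclusion follows from one more application of Lemma~\ref{thm0201A}. No block construction, no anti-concentration bound, no zero-one law is needed.
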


\begin{proof}
For a fixed sequence $\{a_n\}_{n=1}^\infty\in\ell^2$, we see from the analyticity of $\sum_{n=1}^\infty a_n X_n \zeta^{\alpha(n)}$ that for every $0<r_0<1$, $\sum_{n=1}^\infty a_n X_n r_0^{|\alpha(n)|_{*}}$ converges absolutely a.s..
Hence by Lemma \ref{thm0201A},
\begin{equation}\label{equ307}
\limsup_{n\rightarrow\infty}|a_n X_n|^{\frac{1}{|\alpha(n)|_{*}}}\leq\frac{1}{r_0}\quad \mathrm{a.s.}.
\end{equation}
Write $X_n=\sigma_n Y_n$, where $Y_n$ is a Gaussian $N(0,1)$ variable. Then it follows from (\ref{equ307}) and Lemma \ref{thm0202A},
\begin{equation}\label{equ308}
\limsup_{n\rightarrow\infty}|a_n \sigma_n|^{\frac{1}{|\alpha(n)|_{*}}}=\limsup_{n\rightarrow\infty}|a_n \sigma_n Y_n|^{\frac{1}{|\alpha(n)|_{*}}}=
\limsup_{n\rightarrow\infty}|a_n X_n|^{\frac{1}{|\alpha(n)|_{*}}}\leq\frac{1}{r_0}\quad \mathrm{a.s.}.
\end{equation}
Put $a_n=r_0^{|\alpha(n)|_{*}}$, then $\{a_n\}\in\ell^2$.
Hence by (\ref{equ308}),
$$r_0\limsup_{n\rightarrow\infty}|\sigma_n|^{\frac{1}{|\alpha(n)|_{*}}}=\limsup_{n\rightarrow\infty}|a_n \sigma_n|^{\frac{1}{|\alpha(n)|_{*}}}\leq\frac{1}{r_0}.$$
Letting $r_0\rightarrow1$ yields that $\limsup_{n\rightarrow\infty}|\sigma_n|^{\frac{1}{|\alpha(n)|_{*}}}\leq1$, which implies that
$$\limsup_{n\rightarrow\infty}\left(\sigma_n^2 r^{2|\alpha(n)|_{*}}\right)^{\frac{1}{|\alpha(n)|_{*}}}=r^2\limsup_{n\rightarrow\infty}|\sigma_n|^{\frac{2}{|\alpha(n)|_{*}}}\leq r^2<1,$$
and the desired conclusion follows from Lemma \ref{thm0201A}.
\end{proof}

\noindent\textbf{Proof of Theorem \ref{thm0303}.} We will use the uniform boundedness principle to complete the proof.
For $0<r<1$, it will be shown that $\mathcal{R}_{[r]}: \mathbf{H}_\infty^2\rightarrow L^2(\Omega,\mathbf{H}_\infty^p)$ is a bounded linear operator.
By the proof of Theorem \ref{thm0301}, there is a constant $C>0$ for which if $F\in\mathbf{H}_\infty^2$, then
\begin{equation}\label{equ309}
\|\mathcal{R}_{[r]}F\|_{L^2(\Omega,\mathbf{H}_\infty^p)}\leq C\left[\int_{\mathbb{T}^\infty}\left(\int_{\Omega}\left|\sum_{n=1}^\infty a_n X_n r^{|\alpha(n)|_{*}}w^{\alpha(n)}\right|^2\mathrm{d}\mathbb{P}\right)^{\frac{p}{2}}\mathrm{d}m_\infty(w)\right]^{\frac{1}{p}}.
\end{equation}
For each $w\in\mathbb{T}^\infty$, H\"{o}lder's inequality gives
$$\left|\sum_{n=1}^\infty a_n X_n r^{|\alpha(n)|_{*}}w^{\alpha(n)}\right|^2
\leq\left(\sum_{n=1}^\infty|a_n|^2\right)\left(\sum_{n=1}^\infty X_n^2 r^{2|\alpha(n)|_{*}}\right)
=\|F\|_2^2\left(\sum_{n=1}^\infty X_n^2 r^{2|\alpha(n)|_{*}}\right).$$
Substituting this into (\ref{equ309}) shows that
$$\|\mathcal{R}_{[r]}F\|_{L^2(\Omega,\mathbf{H}_\infty^p)}\leq C\left(\sum_{n=1}^\infty \sigma_n^2 r^{2|\alpha(n)|_{*}}\right)^{\frac{1}{2}}\|F\|_2.$$
Hence by Lemma \ref{thm0304}, $\mathcal{R}_{[r]}: \mathbf{H}_\infty^2\rightarrow L^2(\Omega,\mathbf{H}_\infty^p)$ is bounded.
To prove the boundedness of $\mathcal{R}$, we continue to verify that for every $F\in\mathbf{H}_\infty^2$, $\mathcal{R}F=\lim_{r\rightarrow1}\mathcal{R}_{[r]}F$ in $L^2(\Omega,\mathbf{H}_\infty^p)$.
As showed in Lemma \ref{thm0302}, $(\mathcal{R}F)^{*}$ is centered Gaussian in $L^{p}(\mathbb{T}^\infty)$.
Then it follows from \cite[Corollary 3.2]{LT} that when $\lambda>0$ is small enough, $\mathbb{E}(\exp(\lambda\|\mathcal{R}F\|_{p}^2))$ is finite, and hence $\|\mathcal{R}F\|_{p}\in L^2(\Omega)$.
Since for each $r$, $\|\mathcal{R}_{[r]}F-(\mathcal{R}F)^{*}\|_{p}\leq 2\|\mathcal{R}F\|_p$ a.s., we conclude from Fatou's lemma that
$$\begin{aligned}
\limsup_{r\rightarrow1}\|\mathcal{R}_{[r]}F-\mathcal{R}F\|_{L^2(\Omega,\mathbf{H}_\infty^p)}^2&=\limsup_{r\rightarrow1}\int_{\Omega}\|\mathcal{R}_{[r]}F-(\mathcal{R}F)^{*}\|_{p}^2\mathrm{d}\mathbb{P}\\
&\leq\int_{\Omega}\limsup_{r\rightarrow1}\|\mathcal{R}_{[r]}F-(\mathcal{R}F)^{*}\|_{p}^2\mathrm{d}\mathbb{P}\\
&=0.
\end{aligned}$$
That is, $\mathcal{R}F=\lim_{r\rightarrow1}\mathcal{R}_{[r]}F$ in $L^2(\Omega,\mathbf{H}_\infty^p)$, and the uniform boundedness principle reveals the desired conclusion. $\hfill\square$

\subsection{Coefficient multipliers and cases associated with general Gaussian processes }

In this subsection, we will give a Littlewood-type theorem for general Gaussian processes.
Beforehand, we introduce the notion of coefficient multipliers, see \cite{JVA}.

Given two linear spaces of complex sequences $\mathcal{X}$ and $\mathcal{Y}$, we say a complex sequence $\{\lambda_n\}_{n=1}^\infty$ is a coefficient multiplier of $\mathcal{X}$ into $\mathcal{Y}$, if for every $\{x_n\}_{n=1}^\infty\in\mathcal{X}$, $\{\lambda_n x_n\}_{n=1}^\infty\in\mathcal{Y}$.
The space of all coefficient multipliers of $\mathcal{X}$ into $\mathcal{Y}$ will be denoted by $(\mathcal{X},\mathcal{Y})$.
Let $1\leq p<\infty$.
As we have mentioned, each $F\in\mathbf{H}_\infty^p$ has a monomial expansion
$$F(\zeta)=\sum_{n=1}^\infty a_n \zeta^{\alpha(n)},\quad\zeta\in\mathbb{D}_1^\infty.$$
Therefore, we can regard $\mathbf{H}_\infty^p$ as a linear space of complex sequences if we need.
Hence the spaces of coefficient multipliers $(\mathbf{H}_\infty^2,\mathbf{H}_\infty^p)$ can be defined similarly.

The following proposition tells us that Littlewood-type theorems for general Gaussian processes can be reduced to cases of centered Gaussian processes.
Its proof is highly similar to that of \cite[Lemma 3]{CFGL}, and we omit it.

\begin{prop}\label{thm0305}
Let $1\leq p<\infty$ and $\bm{X}$ be a Gaussian process with $X_n\sim N(\mu_n,\sigma_n^2)$.
Then the following statements are equivalent:

(1) If $F\in\mathbf{H}_\infty^2$, then $\mathcal{R}_{\bm{X}}F\in\mathbf{H}_\infty^p$ a.s.;

(2) The mean value sequence $\{\mu_n\}_{n=1}^\infty\in(\mathbf{H}_\infty^2,\mathbf{H}_\infty^p)$, and if $F\in\mathbf{H}_\infty^2$, then $\mathcal{R}_{\bm{Y}}F\in\mathbf{H}_\infty^p$ a.s., where $\bm{Y}=\{X_n-\mu_n\}_{n=1}^\infty$.
\end{prop}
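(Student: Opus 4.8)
The plan is to mimic the one-variable argument of \cite[Lemma 3]{CFGL}, splitting each Gaussian coefficient into its mean and its centered fluctuation. Writing $\bm{X}=\{X_n\}$ with $X_n\sim N(\mu_n,\sigma_n^2)$ and $\bm{Y}=\{X_n-\mu_n\}$, we have the formal decomposition
\begin{equation}\label{equ310}
(\mathcal{R}_{\bm{X}}F)(\zeta)=\sum_{n=1}^\infty a_n\mu_n\zeta^{\alpha(n)}+\sum_{n=1}^\infty a_n(X_n-\mu_n)\zeta^{\alpha(n)}=(M_\mu F)(\zeta)+(\mathcal{R}_{\bm{Y}}F)(\zeta),
\end{equation}
where $M_\mu F$ denotes the coefficient multiplier $\{a_n\}\mapsto\{a_n\mu_n\}$ applied to $F$. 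Note that $\bm{Y}$ is a centered Gaussian process, and the correspondence $\bm{X}\leftrightarrow(\{\mu_n\},\bm{Y})$ is a bijection; so the equivalence amounts to checking that, for $F\in\mathbf{H}_\infty^2$ fixed, the event $\{\mathcal{R}_{\bm{X}}F\in\mathbf{H}_\infty^p\}$ coincides (up to null sets) with $\{M_\mu F\in\mathbf{H}_\infty^p\}\cap\{\mathcal{R}_{\bm{Y}}F\in\mathbf{H}_\infty^p\}$, and then quantifying over all $F\in\mathbf{H}_\infty^2$.

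The direction $(2)\Rightarrow(1)$ is the easy half: if $\{\mu_n\}\in(\mathbf{H}_\infty^2,\mathbf{H}_\infty^p)$ then $M_\mu F\in\mathbf{H}_\infty^p$ for \emph{every} $F\in\mathbf{H}_\infty^2$, this being a deterministic fact; combined with $\mathcal{R}_{\bm{Y}}F\in\mathbf{H}_\infty^p$ a.s.\ and \eqref{equ310}, we get $\mathcal{R}_{\bm{X}}F\in\mathbf{H}_\infty^p$ a.s.\ since $\mathbf{H}_\infty^p$ is a linear space. For $(1)\Rightarrow(2)$, first I would check that $\{\mu_n\}\in(\mathbf{H}_\infty^2,\mathbf{H}_\infty^p)$. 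Fix $F\in\mathbf{H}_\infty^2$. Since $X_n$ and $-X_n$ have the same distribution over independent blocks — more precisely, using a symmetrization: introduce an independent copy $\bm{X}'$ of $\bm{X}$, so that $\bm{X}-\bm{X}'=\{X_n-X_n'\}$ is a centered Gaussian process with variances $2\sigma_n^2$; then $\mathcal{R}_{\bm{X}}F-\mathcal{R}_{\bm{X}'}F=\mathcal{R}_{\bm{X}-\bm{X}'}F$, and by hypothesis both $\mathcal{R}_{\bm{X}}F$ and $\mathcal{R}_{\bm{X}'}F$ lie in $\mathbf{H}_\infty^p$ a.s., hence so does their difference. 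Thus $\mathcal{R}_{\bm{Y}'}F\in\mathbf{H}_\infty^p$ a.s.\ for the centered process $\bm{Y}'=\bm{X}-\bm{X}'$; a scaling argument (dividing by $\sqrt2$, which does not change the underlying Gaussian structure) then yields $\mathcal{R}_{\bm{Y}}F\in\mathbf{H}_\infty^p$ a.s.\ for the original centered part $\bm{Y}$. Subtracting from \eqref{equ310} gives $M_\mu F=\mathcal{R}_{\bm{X}}F-\mathcal{R}_{\bm{Y}}F\in\mathbf{H}_\infty^p$ a.s.; but $M_\mu F$ is deterministic, so $M_\mu F\in\mathbf{H}_\infty^p$ outright. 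As this holds for every $F\in\mathbf{H}_\infty^2$, we conclude $\{\mu_n\}\in(\mathbf{H}_\infty^2,\mathbf{H}_\infty^p)$, and the second assertion of (2) was already obtained.

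The main obstacle, and the point that requires care rather than being purely formal, is the manipulation of the infinite monomial series in \eqref{equ310}: one must justify that the decomposition into $M_\mu F$ and $\mathcal{R}_{\bm{Y}}F$ is valid at the level of genuine analytic functions on $\mathbb{D}_1^\infty$ (equivalently, as elements of $\mathbf{H}_\infty^p$), not merely as formal power series — in particular that $\mathcal{R}_{\bm{Y}}F$ is a.s.\ analytic on $\mathbb{D}_1^\infty$ and that termwise subtraction is legitimate on the dilates $\mathcal{R}_{[r]}$. This is handled via Lemma \ref{thm0304} (giving $\sum_n\sigma_n^2 r^{2|\alpha(n)|_*}<\infty$, hence a.s.\ analyticity of $\mathcal{R}_{\bm{Y}}F$ when $F\in\mathbf{H}_\infty^2$) together with the dilation formalism of \eqref{equ201}–\eqref{equ202}, exactly as in the proof of Theorem \ref{thm0201}; the symmetrization step must be carried out on the dilates $\mathcal{R}_{[r]}$ first and then passed to the limit $r\to1$. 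Since the paper states this proof is "highly similar to that of \cite[Lemma 3]{CFGL}" and omits it, I would simply indicate the symmetrization-plus-linearity skeleton above and cite \cite{CFGL} for the routine details.
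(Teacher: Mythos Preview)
Your symmetrization-plus-subtraction argument is correct and is precisely the approach of \cite[Lemma 3]{CFGL} to which the paper defers (the paper omits the proof entirely). One small point: you do not need Lemma \ref{thm0304} here --- once symmetrization gives $\mathcal{R}_{\bm{Y}'}F\in\mathbf{H}_\infty^p$ a.s., the distributional identity $\bm{Y}\stackrel{d}{=}\bm{Y}'/\sqrt{2}$ already yields $\mathcal{R}_{\bm{Y}}F\in\mathbf{H}_\infty^p$ a.s.\ (and in particular its analyticity on $\mathbb{D}_1^\infty$), so the termwise decomposition $\mathcal{R}_{\bm{X}}F=M_\mu F+\mathcal{R}_{\bm{Y}}F$ is justified without invoking that lemma.
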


By Proposition \ref{thm0305}, it is crucial to clarify whether the mean value sequence $\{\mu_n\}_{n=1}^\infty$ is a coefficient multiplier of $\mathbf{H}_\infty^2$ into $\mathbf{H}_\infty^p$.
The rest of this subsection will characterize the space $(\mathbf{H}_\infty^2,\mathbf{H}_\infty^p)$ when $1\leq p\leq2$.
The case of $p>2$ is still open, even in one-variable setting \cite[pp.276]{JVA}.

\begin{thm}\label{thm0306}
When $1\leq p\leq2$, $(\mathbf{H}_\infty^2,\mathbf{H}_\infty^p)=\ell^\infty$.
\end{thm}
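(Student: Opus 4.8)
\textbf{Proof proposal for Theorem \ref{thm0306}.}

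The plan is to prove the two inclusions $(\mathbf{H}_\infty^2,\mathbf{H}_\infty^p)\subseteq\ell^\infty$ and $\ell^\infty\subseteq(\mathbf{H}_\infty^2,\mathbf{H}_\infty^p)$ separately, with the second one being the substantive direction. For the inclusion $(\mathbf{H}_\infty^2,\mathbf{H}_\infty^p)\subseteq\ell^\infty$, I would argue by contradiction: if $\{\lambda_n\}$ is an unbounded multiplier, pick a subsequence with $|\lambda_{n_k}|\to\infty$ fast, and build a target sequence $\{a_n\}\in\ell^2$ supported on $\{n_k\}$ for which $\{\lambda_n a_n\}\notin\ell^2$; since $\mathbf{H}_\infty^p\hookrightarrow\mathbf{H}_\infty^1$ and, more usefully, since membership in $\mathbf{H}_\infty^p$ for $p\le 2$ forces the coefficient sequence to lie in $\ell^2$ (by the standard $L^p$--$L^2$ inequality on $\mathbb{T}^\infty$, i.e. $\|F\|_2\le\|F\|_p$ fails in general but the Hausdorff--Young-type bound $\|\{a_n\}\|_{\ell^2}\le C\|F\|_{H^p(\mathbb{T}^\infty)}$ does hold for $1\le p\le 2$ via the contractive inclusion into $H^2$ on the Fourier side is \emph{not} valid — instead one uses that the monomials are orthonormal and Khintchine/Bohr-type reasoning), we obtain that $\{\lambda_n a_n\}$ must lie in $\ell^2$, a contradiction. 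The cleanest route here is to note $\mathbf{H}_\infty^2\subseteq\mathbf{H}_\infty^p$ for $p\le 2$ is false in the wrong direction, so instead I would test the multiplier on the single monomials $\zeta^{\alpha(n)}$, which have $\|\zeta^{\alpha(n)}\|_p=1$ for every $p$, to get $|\lambda_n|=\|\lambda_n\zeta^{\alpha(n)}\|_p\le C$ directly from the closed graph theorem applied to the multiplier operator.

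For the harder inclusion $\ell^\infty\subseteq(\mathbf{H}_\infty^2,\mathbf{H}_\infty^p)$, the key point is that for $1\le p\le 2$ one has $\mathbf{H}_\infty^2\subseteq\mathbf{H}_\infty^p$ with $\|F\|_p\le\|F\|_2$ (this is just the $L^p\subseteq L^2$ inclusion on the probability space $(\mathbb{T}^\infty,m_\infty)$ applied to $|F|$, using Hölder), so it would suffice to show that multiplication by a bounded sequence $\{\lambda_n\}$ sends $\mathbf{H}_\infty^2$ into $\mathbf{H}_\infty^2$ — but that is immediate since $\mathbf{H}_\infty^2$ is exactly the sequence space $\ell^2$ under the monomial correspondence (recalled in the excerpt after Proposition \ref{thm0203}), and $\{\lambda_n a_n\}\in\ell^2$ whenever $\{a_n\}\in\ell^2$ and $\sup_n|\lambda_n|<\infty$. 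Thus if $\{a_n\}\in\ell^2$ then $\{\lambda_n a_n\}\in\ell^2=\mathbf{H}_\infty^2\subseteq\mathbf{H}_\infty^p$, giving $\{\lambda_n\}\in(\mathbf{H}_\infty^2,\mathbf{H}_\infty^p)$. So in fact both inclusions are clean once one has the two facts $\mathbf{H}_\infty^2\cong\ell^2$ and $\mathbf{H}_\infty^2\subseteq\mathbf{H}_\infty^p$ for $p\le 2$; the statement $(\mathbf{H}_\infty^2,\mathbf{H}_\infty^p)=\ell^\infty$ then reduces to $(\ell^2,\ell^2)=\ell^\infty$ together with this inclusion of spaces.

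Putting it together, I would structure the proof as: (i) observe that by the monomial expansion $\mathbf{H}_\infty^2$ is isometrically the sequence space $\ell^2$, and that $\mathbf{H}_\infty^2\subseteq\mathbf{H}_\infty^p$ contractively for $1\le p\le 2$ by Hölder on $(\mathbb{T}^\infty,m_\infty)$; (ii) for $\{\lambda_n\}\in\ell^\infty$ and $F=\sum a_n\zeta^{\alpha(n)}\in\mathbf{H}_\infty^2$, note $\sum|\lambda_n a_n|^2\le\|\{\lambda_n\}\|_\infty^2\sum|a_n|^2<\infty$, so the multiplied series represents a function in $\mathbf{H}_\infty^2\subseteq\mathbf{H}_\infty^p$, proving $\ell^\infty\subseteq(\mathbf{H}_\infty^2,\mathbf{H}_\infty^p)$; (iii) conversely, for $\{\lambda_n\}\in(\mathbf{H}_\infty^2,\mathbf{H}_\infty^p)$, apply the multiplier to the monomial $F=\zeta^{\alpha(n)}$, which satisfies $\|F\|_2=1$ and produces $\lambda_n\zeta^{\alpha(n)}\in\mathbf{H}_\infty^p$ with $\|\lambda_n\zeta^{\alpha(n)}\|_p=|\lambda_n|$; a closed-graph argument shows the multiplier operator $\mathbf{H}_\infty^2\to\mathbf{H}_\infty^p$ is bounded, say with norm $M$, whence $|\lambda_n|\le M$ for all $n$, i.e. $\{\lambda_n\}\in\ell^\infty$. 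The only mild subtlety — the one place to be careful — is justifying the closed graph / uniform boundedness step: one must check that if $F_k\to F$ in $\mathbf{H}_\infty^2$ and $\{\lambda_n a_n^{(k)}\}\to\{c_n\}$ in $\mathbf{H}_\infty^p$, then $c_n=\lambda_n a_n$, which follows because convergence in either Hardy space forces coefficientwise convergence (the coefficient functionals $F\mapsto a_n=\int_{\mathbb{T}^\infty}F^*\overline{w^{\alpha(n)}}\,dm_\infty$ are continuous on $\mathbf{H}_\infty^1\supseteq\mathbf{H}_\infty^p$). I do not expect any genuine obstacle; the content is entirely in correctly invoking the identification $\mathbf{H}_\infty^2\cong\ell^2$ and the inclusion $\mathbf{H}_\infty^2\subseteq\mathbf{H}_\infty^p$.
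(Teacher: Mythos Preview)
Your final argument (the ``putting it together'' paragraph) is correct, and it is genuinely different from the paper's route. The paper proves Theorem~\ref{thm0306} by first establishing Proposition~\ref{thm0308}, namely $s(\mathbf{H}_\infty^p)=\ell^2$ for $1\le p\le 2$; that proposition is proved using the randomization machinery of Theorem~\ref{thm0201} (a standard Bernoulli sequence together with the Khintchine estimate $\int_\Omega|\mathcal{R}_{[r]}\bm{\lambda}|^p\,\mathrm d\mathbb P\sim(\sum|\lambda_n|^2 r^{2|\alpha(n)|_*})^{p/2}$) plus the closed graph theorem. Then the paper invokes the abstract identity $(\mathcal X,\mathcal Y)=(\mathcal X,s(\mathcal Y))$ for solid $\mathcal X$ to reduce to $(\ell^2,\ell^2)=\ell^\infty$. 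Your approach bypasses the solid-core computation entirely: you use only the contractive inclusion $\mathbf{H}_\infty^2\subseteq\mathbf{H}_\infty^p$ (H\"older on the probability space $\mathbb{T}^\infty$) for the direction $\ell^\infty\subseteq(\mathbf{H}_\infty^2,\mathbf{H}_\infty^p)$, and the closed graph theorem together with testing on monomials for the reverse direction. Your argument is shorter and more elementary; what the paper's approach buys is the independently interesting Proposition~\ref{thm0308}, which identifies the solid core of $\mathbf{H}_\infty^p$ and is the natural infinite-variable analogue of the classical result in \cite{JVA}. One small remark: your first paragraph wanders through a couple of false starts (the sentence about ``$\|F\|_2\le\|F\|_p$ fails in general but the Hausdorff--Young-type bound\ldots'' is muddled), but you correctly abandon those and the clean argument you land on in steps (i)--(iii) is complete; the continuity of the coefficient functionals on $\mathbf{H}_\infty^p$ that you need for the closed-graph step is exactly the content of Lemma~\ref{thm0307}.
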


For one-variable setting of this theorem, we refer readers to \cite{JJ, JVA}.
To prove this theorem, we need to determine the solid core of $\mathbf{H}_\infty^p$ when $1\leq p\leq2$.
Recall that a linear space of complex sequences $\mathcal{X}$ is said to be a solid space, if $\{y_n\}_{n=1}^\infty\in\mathcal{X}$ whenever $\{x_n\}_{n=1}^\infty\in\mathcal{X}$ and $|y_n|\leq|x_n|$.
For every linear space of complex sequences $\mathcal{X}$, its solid core, denoted by $s(\mathcal{X})$, is defined to be the largest solid space contained in $\mathcal{X}$.
That is,
$$s(\mathcal{X})=\left\{\{\lambda_n\}_{n=1}^\infty: \{\lambda_n x_n\}_{n=1}^\infty\in\mathcal{X}\;\text{whenever}\;\{x_n\}_{n=1}^\infty\in\ell^\infty\right\},$$
see \cite[pp.129-132]{JVA}.

Before determining the solid core of $\mathbf{H}_\infty^p\;(1\leq p\leq2)$, we present the following lemma.

\begin{lem}\label{thm0307}
If $1\leq p<\infty$ and $F\in\mathbf{H}_\infty^p$ has the monomial expansion
$$F(\zeta)=\sum_{n=1}^\infty a_n \zeta^{\alpha(n)},\quad\zeta\in\mathbb{D}_1^\infty,$$
then for all $n\in\mathbb{N}$, $|a_n|\leq\|F\|_p$.
\end{lem}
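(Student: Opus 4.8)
The plan is to extract the $n$-th coefficient by integrating $F$ against the appropriate conjugate monomial over the torus $\mathbb{T}^\infty$, and then control that integral by the $\mathbf{H}_\infty^p$ norm. Concretely, for the dilate $F_{[r]}(w)=\sum_{k=1}^\infty a_k r^{|\alpha(k)|_*}w^{\alpha(k)}$, which by the discussion following Theorem \ref{thm0201} lies in $\mathbf{A}_\infty$ (so the series converges uniformly on $\mathbb{T}^\infty$), orthonormality of the characters $\{w^{\alpha(k)}\}_{k\geq1}$ in $L^2(\mathbb{T}^\infty)$ with respect to the Haar measure $m_\infty$ gives
$$
a_n r^{|\alpha(n)|_*}=\int_{\mathbb{T}^\infty}F_{[r]}(w)\,\overline{w^{\alpha(n)}}\,\mathrm{d}m_\infty(w).
$$
First I would justify the interchange of sum and integral here using the uniform (indeed absolute) convergence of the monomial expansion on the compact set $r\mathbb{T}\times r^2\mathbb{T}\times\cdots$, which is exactly the fact recalled in Section 2.

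Next, since $|\overline{w^{\alpha(n)}}|=1$ on $\mathbb{T}^\infty$ and $m_\infty$ is a probability measure, Hölder's inequality (with exponents $p$ and its conjugate $q$) yields
$$
|a_n|\,r^{|\alpha(n)|_*}\leq\int_{\mathbb{T}^\infty}|F_{[r]}(w)|\,\mathrm{d}m_\infty(w)\leq\left(\int_{\mathbb{T}^\infty}|F_{[r]}|^p\,\mathrm{d}m_\infty\right)^{1/p}\leq\|F\|_p,
$$
where the last inequality is just the definition of the $\mathbf{H}_\infty^p$ norm as a supremum over $0<r<1$. Finally, letting $r\to1$ (the left-hand side tends to $|a_n|$ since $r^{|\alpha(n)|_*}\to1$ for the fixed index $n$) gives $|a_n|\leq\|F\|_p$, as claimed.

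The argument is essentially routine once the right objects are in place; the only point requiring a little care is the very first step — that the coefficient of $F_{[r]}$ against $\overline{w^{\alpha(n)}}$ can legitimately be computed termwise — but this is handled precisely by the uniform and absolute convergence of the monomial expansion on compact subsets of $\mathbb{D}_1^\infty$, which the excerpt has already established (citing \cite{DMP, DG}). No new ideas beyond Hölder's inequality and orthonormality of characters are needed.
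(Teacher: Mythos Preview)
Your proof is correct and is arguably more streamlined than the paper's. Both arguments extract $a_n$ by integrating a dilate of $F$ against a monomial and then apply H\"older's inequality and let $r\to1$; the difference lies in how the coefficient is extracted. The paper first passes to Bohr's $N$te Abschnitt $A_N F$ (restricting to finitely many variables so that $\zeta^{\alpha(n)}$ is captured), then applies the finite-dimensional Cauchy integral formula on $\mathbb{T}^N$, which introduces the shifted multi-index $\beta(n)=(\alpha_1+1,\ldots,\alpha_N+1,0,\ldots)$ and the factor $r^{|\beta(n)|_*}$. You instead work directly on $\mathbb{T}^\infty$ and use orthonormality of the characters $\{w^{\alpha(k)}\}$ with respect to $m_\infty$, which is legitimate precisely because of the uniform convergence of the monomial series of $F_{[r]}$ on $\mathbb{T}^\infty$ that the paper has already recorded. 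Your route avoids the detour through finite-variable slices and the Cauchy kernel, at the cost of invoking (the entirely standard) orthogonality of characters on the compact group $\mathbb{T}^\infty$; the paper's route has the minor advantage of relying only on classical several-variable complex analysis. Either way the essential content is the same one-line H\"older estimate.
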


\begin{proof}
Fix an $n\in\mathbb{N}$, choose $N$ large enough such that $\zeta^{\alpha(n)}$ only depends on variables $\zeta_1,\ldots,\zeta_N$.
Let
$$(A_N F)(\zeta)=F(\zeta_1,\ldots,\zeta_N,0,\ldots),\quad\zeta\in\mathbb{D}_1^\infty$$
be Bohr's $N$te Abschnitt of $F$, which can be considered as a function defined on the polydisk $\mathbb{D}^N$.
Then it follows from Cauchy's formula \cite[pp.18]{Sch} that for every $0<r<1$,
$$a_n=\frac{1}{\alpha(n)!}\left(\frac{\partial^{\alpha(n)}A_N F}{\partial^{\alpha_1}\zeta_1\cdots\partial^{\alpha_N}\zeta_N}\right)(0)=\frac{1}{i^N r^{|\beta(n)|_{*}}}\int_{\mathbb{T}^N}\frac{(A_N F)_{[r]}(w)}{w^{\beta(n)}}\mathrm{d}m_N(w),$$
where $\alpha(n)!=\alpha_1\cdots\alpha_N$, $\beta(n)=(\alpha_1+1,\ldots,\alpha_N+1,0,\ldots)$, and $\mathrm{d}m_N$ denotes the Haar measure of $N$-torus $\mathbb{T}^N$.
Hence by H\"{o}lder's inequality,
$$|a_n|\leq\frac{\|(A_N F)_{[r]}\|_1}{r^{|\beta(n)|_{*}}}\leq\frac{\|(A_N F)_{[r]}\|_p}{r^{|\beta(n)|_{*}}}\leq\frac{\|F\|_p}{r^{|\beta(n)|_{*}}}.$$
Letting $r\rightarrow1$ yields the desired lemma.
\end{proof}

The following proposition gives the solid core of $\mathbf{H}_\infty^p$ when $1\leq p\leq2$.
For one-variable setting, see \cite[Theorem 6.3.4]{JVA}.

\begin{prop}\label{thm0308}
When $1\leq p\leq2$, $s(\mathbf{H}_\infty^p)=\ell^2$.
\end{prop}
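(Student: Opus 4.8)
The plan is to prove the two inclusions $\ell^2\subseteq s(\mathbf{H}_\infty^p)$ and $s(\mathbf{H}_\infty^p)\subseteq\ell^2$ separately. For the first inclusion, I would take $\{\lambda_n\}\in\ell^2$ and an arbitrary $\{x_n\}\in\ell^\infty$, and show that the sequence $\{\lambda_n x_n\}$ is the coefficient sequence of some function in $\mathbf{H}_\infty^p$. Since $|\lambda_n x_n|\le\|\{x_n\}\|_\infty|\lambda_n|$, we have $\{\lambda_n x_n\}\in\ell^2$, so by the characterization recalled just before the proof of Theorem \ref{thm0201} (namely $\mathbf{H}_\infty^2=\ell^2$), the series $\sum_n\lambda_n x_n\zeta^{\alpha(n)}$ defines a function in $\mathbf{H}_\infty^2\subseteq\mathbf{H}_\infty^p$ (the inclusion $\mathbf{H}_\infty^2\subseteq\mathbf{H}_\infty^p$ holding for $p\le 2$ because $m_\infty$ is a probability measure). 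This already gives $\ell^2\subseteq s(\mathbf{H}_\infty^p)$.

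For the reverse inclusion $s(\mathbf{H}_\infty^p)\subseteq\ell^2$, I would argue by contradiction using a randomization/Khintchine argument in the spirit of Section 2. Suppose $\{\lambda_n\}\in s(\mathbf{H}_\infty^p)$ but $\{\lambda_n\}\notin\ell^2$. Let $\bm{X}=\{X_n\}$ be a standard Bernoulli (or Steinhaus) sequence; since $\{X_n(\omega)\}\in\ell^\infty$ for a.e.\ $\omega$ (indeed $|X_n|=1$), the definition of the solid core forces $\sum_n\lambda_n X_n(\omega)\zeta^{\alpha(n)}\in\mathbf{H}_\infty^p$ for a.e.\ $\omega$. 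But the function $G(\zeta)=\sum_n\lambda_n\zeta^{\alpha(n)}$ is analytic on $\mathbb{D}_1^\infty$ (its coefficients being bounded, hence the monomial series converges on $\mathbb{D}_1^\infty$ by the Defant–Maestre–Prengel result), and by hypothesis $G\notin\mathbf{H}_\infty^2$. Then part (2) of Theorem \ref{thm0201} applies and yields $\mathcal{R}G\notin\mathbf{H}_\infty^p$ a.s., a contradiction. Hence $\{\lambda_n\}\in\ell^2$.

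The main obstacle is making sure that Theorem \ref{thm0201}(2) is genuinely applicable here, i.e.\ that $G(\zeta)=\sum_n\lambda_n\zeta^{\alpha(n)}$ is a bona fide analytic function on $\mathbb{D}_1^\infty$ when $\{\lambda_n\}$ is merely bounded. This is exactly the point where the infinite-variable setting is more delicate than one variable: one must invoke that a monomial series with bounded (indeed polynomially bounded) coefficients converges uniformly and absolutely on compact subsets of $\mathbb{D}_1^\infty$, which follows from the Defant–Maestre–Prengel theorem cited in the Introduction together with the estimate $\sum_n r^{|\alpha(n)|_*}=\prod_{j\ge1}(1-r^j)^{-1}<\infty$ used already in Lemma \ref{thm0201A}. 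Once analyticity of $G$ is in hand, the dichotomy in Theorem \ref{thm0201} closes the argument cleanly; alternatively, one could sidestep the contradiction route and instead use Lemma \ref{thm0307} together with a direct $\ell^2$-estimate, but the randomization argument is the shorter path given what has been established.
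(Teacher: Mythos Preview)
Your argument is correct, and it takes a slightly different route from the paper's. Both proofs begin the nontrivial inclusion $s(\mathbf{H}_\infty^p)\subseteq\ell^2$ by randomizing with a standard Bernoulli sequence, using that $\{X_n(\omega)\}\in\ell^\infty$ with norm $1$ a.s.\ forces $\mathcal{R}\bm{\lambda}\in\mathbf{H}_\infty^p$ a.s.\ whenever $\bm{\lambda}\in s(\mathbf{H}_\infty^p)$. From there the paper proceeds directly: it invokes the closed graph theorem (via Lemma~\ref{thm0307}) to get a \emph{uniform} bound $\|\mathcal{R}\bm{\lambda}\|_p\le C_1$ independent of $\omega$, then feeds this into the Khintchine--Kahane estimate (equation~(\ref{equ310}), reused from the proof of Theorem~\ref{thm0201}) to conclude $\sum_n|\lambda_n|^2 r^{2|\alpha(n)|_*}\le C$ for all $r<1$. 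You instead argue by contradiction, applying Theorem~\ref{thm0201}(2) as a black box: if $\bm{\lambda}\notin\ell^2$ then $G\notin\mathbf{H}_\infty^2$, so $\mathcal{R}G\notin\mathbf{H}_\infty^p$ a.s., contradicting the solid-core membership. Your route is shorter precisely because Theorem~\ref{thm0201}(2) already packages the Khintchine estimate and the zero--one law; the paper's route is more self-contained and extracts the quantitative bound explicitly.

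One small cleanup: your justification that $G$ is analytic on $\mathbb{D}_1^\infty$ is more circuitous than needed. You assert the coefficients are bounded without saying why, and the Defant--Maestre--Prengel result you cite goes the other direction (from analyticity to monomial expansion). The direct observation is that $\bm{\lambda}\in s(\mathbf{H}_\infty^p)$ together with the constant sequence $x_n\equiv 1\in\ell^\infty$ already gives $G\in\mathbf{H}_\infty^p$, and functions in $\mathbf{H}_\infty^p$ are by definition analytic on $\mathbb{D}_1^\infty$. This removes the only genuine wrinkle you flagged.
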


\begin{proof}
It suffices to show that $s(\mathbf{H}_\infty^p)\subset\ell^2$ since the inclusion in the other direction is trivial.
Let $\bm{X}$ be a standard Bernoulli sequence.
Then for every $\bm{\lambda}=\{\lambda_n\}_{n=1}^\infty\in s(\mathbf{H}_\infty^p)$,
$$(\mathcal{R}\bm{\lambda})(\zeta)=\sum_{n=1}^\infty \lambda_n X_n \zeta^{\alpha(n)},\quad\zeta\in\mathbb{D}_1^\infty$$
lies in $\mathbf{H}_\infty^p$ a.s..
For every $0<r<1$, write
$$(\mathcal{R}_{[r]}\bm{\lambda})(w)=\sum_{n=1}^\infty \lambda_n X_n r^{|\alpha(n)|_{*}} w^{\alpha(n)},\quad w\in\mathbb{T}^\infty.$$
As proved in Theorem \ref{thm0201},
\begin{equation}\label{equ310}
\int_{\mathbb{T}^\infty}\int_{\Omega}|\mathcal{R}_{[r]}\bm{\lambda}|^p\mathrm{d}\mathbb{P}\mathrm{d}m_\infty\sim
\left(\sum_{n=1}^\infty|\lambda_n|^2 r^{2|\alpha(n)|_{*}}\right)^{\frac{p}{2}}.
\end{equation}
On the other hand, since for almost every $t\in\Omega$, the $\ell^\infty$-norm of $\{X_n(t)\}_{n=1}^\infty$ is $1$, it follows from Lemma \ref{thm0307} and the closed graph theorem that there is a constant $C_1>0$ such that $\|\mathcal{R}\bm{\lambda}\|_p\leq C_1$.
Therefore, for every $0<r<1$,
$$\int_{\mathbb{T}^\infty}\int_{\Omega}\left|\mathcal{R}_{[r]}\bm{\lambda}\right|^p\mathrm{d}\mathbb{P}\mathrm{d}m_\infty
=\int_{\Omega}\int_{\mathbb{T}^\infty}\left|\mathcal{R}_{[r]}\bm{\lambda}\right|^p\mathrm{d}m_\infty\mathrm{d}\mathbb{P}
\leq\int_{\Omega}\|\mathcal{R}\bm{\lambda}\|_p^p\mathrm{d}\mathbb{P}\leq C_1^p.$$
Combining this with (\ref{equ310}) yields that there exists a $C>0$ such that for every $0<r<1$,
$\sum_{n=1}^\infty|\lambda_n|^2 r^{2|\alpha(n)|_{*}}\leq C$, and hence $\{\lambda_n\}_{n=1}^\infty\in\ell^2$.
\end{proof}

The proof of Theorem \ref{thm0306} is an immediate application of Proposition \ref{thm0308}.

\vskip2mm

\noindent\textbf{Proof of Theorem \ref{thm0306}.} Since $\mathbf{H}_\infty^2$ is a solid space, it follows from \cite[Lemma 12.4.1]{JVA} and Proposition \ref{thm0308} that
$$(\mathbf{H}_\infty^2,\mathbf{H}_\infty^p)=(\mathbf{H}_\infty^2,s(\mathbf{H}_\infty^p))=(\ell^2,\ell^2)=\ell^\infty,$$
as desired. $\hfill\square$

\section{Applications to Dirichlet series}

In this section, we will introduce the relationship between the Hardy-Dirichlet space $\mathcal{H}^p$ and the Hardy space $\mathbf{H}_\infty^p$.
As applications of results in the previous sections, we present Littlewood-type theorems for Hardy-Dirichlet spaces.

First, we briefly recall some elements from the theory of Dirichlet series.
A Dirichlet series is a series of the following form:
$$f(s)=\sum_{n=1}^\infty a_n n^{-s},$$
where $s$ is the complex variable.
For such a Dirichlet series and $\sigma\in\mathbb{R}$, write
$$f_{\sigma}(s)=\sum_{n=1}^\infty a_n n^{-(s+\sigma)}.$$
Let $1\leq p<\infty$, as defined in Introduction, the Hardy-Dirichlet space $\mathcal{H}^p$ is the completion of Dirichlet polynomials $\mathcal{P}_D$ in the norm
$$\|Q\|_{p}=\lim_{T\rightarrow\infty}\left(\frac{1}{2T}\int_{-T}^{T}|Q(it)|^p\mathrm{d}t\right)^{\frac{1}{p}},\quad Q\in\mathcal{P}_D.$$
It is worth mentioning that if $f\in\mathcal{H}^p$, then $f$ is a Dirichlet series, and for every $\sigma\geq0$, $f_\sigma\in\mathcal{H}^p$.
Furthermore, $\{\|f_\sigma\|_p\}_{\sigma\geq0}$ decrease with $\sigma$ and $\|f_\sigma-f\|_p\rightarrow0$ as $\sigma\rightarrow0^{+}$.
For more details, we refer readers to \cite{Bay, DGMS}.

As mentioned in Introduction, the Bohr correspondence $\mathcal{B}: \mathcal{H}^p\rightarrow\mathbf{H}_\infty^p$ defines an isometric isomorphism.
However, given an $f\in\mathcal{H}^p$, we can not give the form of $\mathcal{B}f$ from the definition, unless $f\in\mathcal{P}_D$.
The following proposition solves this problem, which can be obtained by \cite[Theorem 3.9]{BDFMS}.
Here we offer a more direct verification instead of using this theorem.

\begin{prop}\label{thm0401}
Let $1\leq p<\infty$ and $f(s)=\sum_{n=1}^\infty a_n n^{-s}$ be a Dirichlet series in $\mathcal{H}^p$. Then
$$(\mathcal{B}f)(\zeta)=\sum_{n=1}^\infty a_n \zeta^{\alpha(n)},\quad\zeta\in\mathbb{D}_1^\infty.$$
\end{prop}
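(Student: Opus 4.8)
The plan is to verify directly that the monomial expansion of $\mathcal{B}f$ has exactly the coefficients $\{a_n\}$, by combining two facts: the Bohr correspondence is an isometric isomorphism $\mathcal{H}^p \to \mathbf{H}_\infty^p$, and on the dense subspace $\mathcal{P}_D$ of Dirichlet polynomials the identity $(\mathcal{B}Q)(\zeta) = \sum_n a_n \zeta^{\alpha(n)}$ holds by definition. So the whole content is a continuity/approximation argument showing that the monomial coefficient functionals are stable under passage to the $\mathcal{H}^p$-limit.

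First I would recall from the discussion in Section 4 that for $f = \sum_{n=1}^\infty a_n n^{-s} \in \mathcal{H}^p$, the translates $f_\sigma$ satisfy $\|f_\sigma - f\|_p \to 0$ as $\sigma \to 0^+$; moreover each $f_\sigma$ is (the $\mathcal{H}^p$-limit, hence can be taken to be) representable well enough that its $N$-th partial Dirichlet sum $S_N f_\sigma = \sum_{n=1}^N a_n n^{-\sigma} n^{-s}$ approximates it — in fact for $\sigma > 0$ the Dirichlet series $\sum a_n n^{-\sigma} n^{-s}$ converges absolutely on the imaginary axis, so $f_\sigma \in \mathcal{H}^p$ is a genuine (absolutely convergent) Dirichlet series and $S_N f_\sigma \to f_\sigma$ in $\mathcal{H}^p$ as $N \to \infty$. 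Applying the isometry $\mathcal{B}$ and using that $\mathcal{B}$ acts on Dirichlet polynomials by the stated formula, $\mathcal{B}(S_N f_\sigma) = \sum_{n=1}^N a_n n^{-\sigma} \zeta^{\alpha(n)}$, so $\mathcal{B}f_\sigma = \sum_{n=1}^\infty a_n n^{-\sigma}\zeta^{\alpha(n)}$ with the series converging in $\mathbf{H}_\infty^p$. Then $\|\mathcal{B}f_\sigma - \mathcal{B}f\|_p = \|f_\sigma - f\|_p \to 0$.

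Next I would extract the monomial coefficients of $\mathcal{B}f$ and show they equal the $a_n$. For this I invoke Lemma \ref{thm0307}: if $G \in \mathbf{H}_\infty^p$ has monomial expansion $\sum_n c_n \zeta^{\alpha(n)}$, then $|c_n| \le \|G\|_p$ for every $n$; by linearity, the $n$-th monomial coefficient functional $G \mapsto c_n$ is bounded on $\mathbf{H}_\infty^p$. Write $\mathcal{B}f = \sum_n c_n \zeta^{\alpha(n)}$ for its (unique, by \cite{DMP}) monomial expansion. The $n$-th coefficient of $\mathcal{B}f_\sigma$ is $a_n n^{-\sigma}$; applying the bounded coefficient functional to $\mathcal{B}f_\sigma \to \mathcal{B}f$ gives $a_n n^{-\sigma} \to c_n$ as $\sigma \to 0^+$, while trivially $a_n n^{-\sigma} \to a_n$. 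Hence $c_n = a_n$ for all $n$, which is the claim.

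The one point requiring care — and the main obstacle — is justifying that for $\sigma>0$ the element $f_\sigma \in \mathcal{H}^p$ really is the absolutely convergent Dirichlet series $\sum_n a_n n^{-\sigma} n^{-s}$ with partial sums converging in the $\mathcal{H}^p$-norm, rather than merely some abstract completion element; equivalently, that $\|f_\sigma - S_N f_\sigma\|_p \to 0$. This is where I would lean on the cited structure theory (\cite{Bay, DGMS, BDFMS}): since $f \in \mathcal{H}^p$ the coefficients satisfy $a_n = O(n^\varepsilon)$ for every $\varepsilon>0$ (a standard consequence of $\mathcal{H}^p \hookrightarrow \mathcal{H}^1$ and the coefficient bound on $\mathcal{H}^1$), so $\sum_n |a_n| n^{-\sigma} < \infty$ for $\sigma > 0$; then the tail $f_\sigma - S_N f_\sigma = \sum_{n>N} a_n n^{-\sigma} n^{-s}$ is a uniformly (on the imaginary axis) small Dirichlet series and its $\mathcal{H}^p$-norm is controlled by its sup-norm, which tends to $0$. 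Once this is in place the rest is the routine continuity argument sketched above.
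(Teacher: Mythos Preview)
Your overall strategy coincides with the paper's: approximate $f$ by $f_\sigma$, then $f_\sigma$ by its partial sums $S_N f_\sigma$, transport through $\mathcal{B}$, and identify coefficients in the limit. Where you diverge is in the final identification: you invoke Lemma~\ref{thm0307} to see that each monomial-coefficient functional on $\mathbf{H}_\infty^p$ is continuous, so passing to the limit $\sigma\to 0^+$ immediately yields $c_n=\lim_{\sigma\to 0^+}a_n n^{-\sigma}=a_n$. The paper instead uses continuity of point evaluations on $\mathbb{D}_1^\infty$, establishes the identity first on $\mathbb{D}_0^\infty$ via dominated convergence, and then reads off the coefficients through the Abschnitt $A_k\mathcal{B}f$. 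Your route is shorter and perfectly legitimate.

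There is, however, a genuine gap precisely at the place you flagged as ``the main obstacle.'' From $a_n=O(n^\varepsilon)$ for every $\varepsilon>0$ (indeed $|a_n|\le\|f\|_p$) one obtains $\sum_n |a_n|\,n^{-\sigma}<\infty$ only for $\sigma>1$, not for all $\sigma>0$: the tail is dominated by $C\sum_{n>N} n^{-\sigma}$, which diverges for $\sigma\le 1$. (Concretely, for $p=2$ take $a_n=(n\log^2 n)^{-1/2}$; then $f\in\mathcal{H}^2$ but $\sum|a_n|n^{-\sigma}=\infty$ for every $\sigma\le 1/2$.) So your claim that $f_\sigma$ is an absolutely convergent Dirichlet series on the imaginary axis, and hence that $\|f_\sigma-S_Nf_\sigma\|_p$ is controlled by the sup-norm of the tail, fails for small $\sigma$; and small $\sigma$ is exactly the regime you need for the limit $\sigma\to 0^+$. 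The paper fills this gap by citing Bohr's theorem in $\mathcal{H}^p$ \cite[Theorem~12.4]{DGMS}, which gives $S_Nf_\sigma\to f_\sigma$ in $\mathcal{H}^p$ for every $\sigma>0$ without any appeal to absolute convergence. With that substitution (or any other correct justification of this convergence) your argument goes through.
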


\begin{proof}
For every $n\in\mathbb{N}$ and $\sigma>0$, let
$$(S_n f_\sigma)(s)=\sum_{k=1}^n a_k k^{-(s+\sigma)}$$
be the $n$-th partial sum of $f_\sigma$.
By Bohr's theorem in $\mathcal{H}^p$ \cite[Theorem 12.4]{DGMS}, for every $\sigma>0$, $\{S_n f_\sigma\}_{n=1}^\infty$ converges to $f_\sigma$ in $\mathcal{H}^p$, and hence $\{\mathcal{B}S_n f_\sigma\}_{n=1}^\infty$ converges to $\mathcal{B}f_\sigma$ in $\mathbf{H}_\infty^p$.
Since the point evaluation at each $\zeta\in\mathbb{D}_1^\infty$ is continuous in $\mathbf{H}_\infty^p$ \cite[Theorem 8.1]{CG},
\begin{equation}\label{equ401}
(\mathcal{B}f_\sigma)(\zeta)=\lim_{n\rightarrow\infty}(\mathcal{B}S_n f_\sigma)(\zeta)=\sum_{n=1}^\infty\frac{a_n}{n^\sigma}\zeta^{\alpha(n)},\quad\zeta\in\mathbb{D}_1^\infty.
\end{equation}
Recall that $\|f_\sigma-f\|_p\rightarrow0$ as $\sigma\rightarrow0^{+}$, we see $\|\mathcal{B}f_\sigma-\mathcal{B}f\|_p\rightarrow0$ as $\sigma\rightarrow0^{+}$.
Applying continuity of the point evaluation again shows that
$$(\mathcal{B}f)(\zeta)=\lim_{\sigma\rightarrow0^{+}}(\mathcal{B}f_\sigma)(\zeta)=\lim_{\sigma\rightarrow0^{+}}\sum_{n=1}^\infty\frac{a_n}{n^\sigma}\zeta^{\alpha(n)},\quad\zeta\in\mathbb{D}_1^\infty,$$
where the last equality follows from (\ref{equ401}).
Let $\mathbb{D}_0^\infty$ denote the set of all elements in $\mathbb{D}^\infty$ with finitely many nonzero entries and $p_n$ denote the $n$-th prime number.
For every $\zeta\in\mathbb{D}_0^\infty$, there exists $\sigma>0$ such that $(\zeta_1 p_1^{\sigma},\ldots,\zeta_n p_n^\sigma,\ldots)\in\mathbb{D}_0^\infty$.
Since the series (\ref{equ401}) converges absolutely in $\mathbb{D}_1^\infty$,
$$\sum_{n=1}^\infty a_n\zeta^{\alpha(n)}=\sum_{n=1}^\infty\frac{a_n}{n^\sigma}(\zeta_1 p_1^{\sigma},\ldots,\zeta_n p_n^\sigma,\ldots)^{\alpha(n)}$$
converges absolutely in $\mathbb{D}_0^\infty$.
Now it follows from Lebesgue's dominated convergence theorem that
\begin{equation}\label{equ402}
(\mathcal{B}f)(\zeta)=\lim_{\sigma\rightarrow0^{+}}\sum_{n=1}^\infty\frac{a_n}{n^\sigma}\zeta^{\alpha(n)}=\sum_{n=1}^\infty a_n\zeta^{\alpha(n)},\quad\zeta\in\mathbb{D}_0^\infty.
\end{equation}
For every $k\in\mathbb{N}$, let $\Xi_k$ be the multiplicative subsemigroup of $\mathbb{N}$ generated by $p_1,\ldots,p_k$.
Then by (\ref{equ402}), Bohr's $k$te Abschnitt of $\mathcal{B}f$,
$$(A_k\mathcal{B}f)(\zeta)=(\mathcal{B}f)(\zeta,0,\ldots)=\sum_{n\in\Xi_k} a_n\zeta^{\alpha(n)},\quad\zeta\in\mathbb{D}^k.$$
Therefore, the monomial expansion of $\mathcal{B}f$ on $\mathbb{D}_1^\infty$ is of the coefficient sequence $\{a_n\}_{n=1}^\infty$, and the proof is complete.
\end{proof}

For each Dirichlet series $f(s)=\sum_{n=1}^\infty a_n n^{-s}$, write $(\mathcal{B}^{*}f)(\zeta)=\sum_{n=1}^\infty a_n\zeta^{\alpha(n)}$ for its formal Bohr correspondence.
Proposition \ref{thm0401} implies that if $f\in\mathcal{H}^p\;(1\leq p<\infty)$, then $\mathcal{B}^{*}f=\mathcal{B}f$ on $\mathbb{D}_1^\infty$.
The following corollary immediately follows.

\begin{cor}\label{thm0402}
Let $1\leq p<\infty$ and $F(\zeta)=\sum_{n=1}^\infty a_n \zeta^{\alpha(n)}$ be a function in $\mathbf{H}_\infty^p$.
Then
$$(\mathcal{B}^{-1}F)(s)=\sum_{n=1}^\infty a_n n^{-s},\quad s\in\mathbb{C}_{\frac{1}{2}},$$
where $\mathbb{C}_{\frac{1}{2}}$ denotes the half plane $\{z\in\mathbb{C}: \mathrm{Re}\;z>\frac{1}{2}\}$.
\end{cor}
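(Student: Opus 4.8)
The plan is to read off Corollary~\ref{thm0402} directly from Proposition~\ref{thm0401} together with the fact that the Bohr correspondence $\mathcal{B}:\mathcal{H}^p\to\mathbf{H}_\infty^p$ is an isometric isomorphism. Given $F(\zeta)=\sum_{n=1}^\infty a_n\zeta^{\alpha(n)}\in\mathbf{H}_\infty^p$, set $f=\mathcal{B}^{-1}F\in\mathcal{H}^p$. Since every element of $\mathcal{H}^p$ is a Dirichlet series, we may write $f(s)=\sum_{n=1}^\infty b_n n^{-s}$ for some coefficient sequence $\{b_n\}$; the content of the corollary is that $b_n=a_n$ for all $n$, and that the Dirichlet series converges (at least) on $\mathbb{C}_{1/2}$.

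First I would invoke Proposition~\ref{thm0401} applied to $f\in\mathcal{H}^p$: it tells us that $(\mathcal{B}f)(\zeta)=\sum_{n=1}^\infty b_n\zeta^{\alpha(n)}$ on $\mathbb{D}_1^\infty$. But $\mathcal{B}f=F$ by definition of $f$, so $\sum_{n=1}^\infty b_n\zeta^{\alpha(n)}=\sum_{n=1}^\infty a_n\zeta^{\alpha(n)}$ on $\mathbb{D}_1^\infty$. By the uniqueness of the monomial expansion of an analytic function on $\mathbb{D}_1^\infty$ (Defant--Maestre--Prengel, as recalled in the Introduction), we conclude $b_n=a_n$ for every $n\in\mathbb{N}$. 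Hence $f(s)=\sum_{n=1}^\infty a_n n^{-s}$, which is precisely the claimed formula for $\mathcal{B}^{-1}F$.

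It remains to justify that this Dirichlet series converges on the half-plane $\mathbb{C}_{1/2}$. Here I would use the standard fact that for $f\in\mathcal{H}^p$ the abscissa of absolute (indeed of uniform) convergence of its Dirichlet series is at most $1/2$; equivalently, $\mathcal{H}^p$ embeds continuously into the space of Dirichlet series converging on $\mathbb{C}_{1/2}$, with $\mathcal{H}^p\hookrightarrow\mathcal{H}^2$ for $p\ge 2$ and a Montel/normal-families argument (or the known coefficient bound $\sum|a_n|^2<\infty$ coming from $\mathbf{H}_\infty^2\supset\mathbf{H}_\infty^p$-type estimates together with the elementary bound $\sum n^{-2\mathrm{Re}\,s}<\infty$ for $\mathrm{Re}\,s>1/2$) for $p<2$. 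Concretely, since $\{a_n\}=\{b_n\}$ and, by Lemma~\ref{thm0307} applied to the Abschnitte or by the embedding $\mathcal{H}^p\subset\mathcal{H}^1$ and Cauchy estimates, one gets a polynomial bound on the $a_n$, the series $\sum a_n n^{-s}$ converges absolutely for $\mathrm{Re}\,s>1/2$. This identifies $\mathcal{B}^{-1}F$ with the given Dirichlet series on $\mathbb{C}_{1/2}$, completing the proof.

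The only genuine obstacle is the convergence claim on $\mathbb{C}_{1/2}$: everything else is a formal consequence of Proposition~\ref{thm0401} and uniqueness of monomial expansions. I expect this to be dispatched by citing the known theory of $\mathcal{H}^p$ (e.g.\ \cite{Bay, DGMS}), where it is established that the Dirichlet series of any $f\in\mathcal{H}^p$ converges on $\mathbb{C}_{1/2}$ — this is the infinite-variable analogue of the fact that $\mathbf{H}_\infty^p$-functions have their monomial series converging on $\mathbb{D}_2^\infty$, transported through the Bohr map. So the proof is essentially a two-line deduction: apply Proposition~\ref{thm0401}, match coefficients by uniqueness, and quote the half-plane convergence.
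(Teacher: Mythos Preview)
Your proposal is correct and matches the paper's approach: the paper simply states that the corollary ``immediately follows'' from Proposition~\ref{thm0401}, and your argument---apply the proposition to $f=\mathcal{B}^{-1}F$, then use uniqueness of monomial expansions on $\mathbb{D}_1^\infty$ to conclude $b_n=a_n$---is precisely that deduction. One small caution: your self-contained attempts at the convergence on $\mathbb{C}_{1/2}$ are slightly off (Lemma~\ref{thm0307} gives only $|a_n|\le\|F\|_p$, hence convergence only for $\mathrm{Re}\,s>1$, and the inclusion $\mathbf{H}_\infty^2\supset\mathbf{H}_\infty^p$ is false for $p<2$), but your final resolution---citing the known fact from \cite{Bay,DGMS} that every $f\in\mathcal{H}^p$ converges as a Dirichlet series on $\mathbb{C}_{1/2}$---is correct and is all that is needed.
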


Let $f(s)=\sum_{n=1}^\infty a_n n^{-s}$ be a Dirichlet series and $\bm{X}=\{X_n\}_{n=1}^\infty$ a sequence of random variables.
The randomization $\mathcal{R}_{\bm{X}}f$ of $f$ with respect to $\bm{X}$ is defined to be the series as follows:
$$(\mathcal{R}_{\bm{X}}f)(s)=\sum_{n=1}^\infty a_n X_n n^{-s},$$
and we will write it simply $\mathcal{R}f$ when no confusion can arise.
Now we can present Littlewood-type theorems for Hardy-Dirichlet spaces via Proposition \ref{thm0401}, Corollary \ref{thm0402} and Littlewood-type theorems for Hardy spaces in infinitely many variables.

\begin{thm}\label{thm0403}
Let $\bm{X}$ be a standard random sequence and $f$ a Dirichlet series with formal Bohr correspondence $\mathcal{B}^{*}f$ analytic on $\mathbb{D}_1^\infty$.

(1) If $f\in\mathcal{H}^2$, then for all $1\leq p<\infty$, $\mathcal{R}f\in\mathcal{H}^p$ a.s.;

(2) If $f\notin\mathcal{H}^2$, then for all $1\leq p<\infty$, $\mathcal{R}f\notin \mathcal{H}^p$ a.s..
\end{thm}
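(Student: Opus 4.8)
The plan is to transfer Theorem~\ref{thm0201} from the Hardy space $\mathbf{H}_\infty^p$ to the Hardy-Dirichlet space $\mathcal{H}^p$ via the Bohr correspondence, using Proposition~\ref{thm0401} and Corollary~\ref{thm0402} to make the identification precise. First I would observe that the formal Bohr correspondence $\mathcal{B}^{*}$ intertwines the two notions of randomization: if $f(s)=\sum_{n=1}^\infty a_n n^{-s}$ and $F=\mathcal{B}^{*}f=\sum_{n=1}^\infty a_n\zeta^{\alpha(n)}$, then $\mathcal{B}^{*}(\mathcal{R}_{\bm X}f)=\mathcal{R}_{\bm X}F$ as formal objects, simply because both replace $a_n$ by $a_n X_n$ and the exponent bookkeeping $n\leftrightarrow\alpha(n)$ is unchanged. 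So for each fixed $\omega\in\Omega$ the randomized Dirichlet series $\mathcal{R}f$ and the randomized monomial series $\mathcal{R}F$ correspond to each other under $\mathcal{B}^{*}$, and since $F$ is assumed analytic on $\mathbb{D}_1^\infty$, Theorem~\ref{thm0201} applies to $F$.

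For part (1): if $f\in\mathcal{H}^2$, then by Proposition~\ref{thm0401} the function $F=\mathcal{B}f=\mathcal{B}^{*}f\in\mathbf{H}_\infty^2$ with the same coefficient sequence $\{a_n\}$, in particular $\{a_n\}\in\ell^2$ (recall $F\in\mathbf{H}_\infty^2$ iff $\{a_n\}\in\ell^2$). Applying Theorem~\ref{thm0201}(1) to $F$, we get $\mathcal{R}F\in\mathbf{H}_\infty^p$ a.s. for every $1\leq p<\infty$. By Corollary~\ref{thm0402}, $\mathcal{B}^{-1}(\mathcal{R}F)$ is precisely the Dirichlet series $\sum_{n=1}^\infty a_n X_n n^{-s}=\mathcal{R}f$, which therefore lies in $\mathcal{H}^p$ a.s.\ (the isometric isomorphism $\mathcal{B}:\mathcal{H}^p\to\mathbf{H}_\infty^p$ transports membership back). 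Here one should be a little careful that $\mathcal{R}F$ is genuinely analytic on $\mathbb{D}_1^\infty$ a.s.\ so that Corollary~\ref{thm0402} applies — but this is part of the conclusion of Theorem~\ref{thm0201}(1), which asserts $\mathcal{R}F\in\mathbf{H}_\infty^p$ a.s., and every element of $\mathbf{H}_\infty^p$ is analytic on $\mathbb{D}_1^\infty$.

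For part (2): if $f\notin\mathcal{H}^2$, I claim $F=\mathcal{B}^{*}f\notin\mathbf{H}_\infty^2$. Indeed, if $F$ were in $\mathbf{H}_\infty^2$, then $\mathcal{B}^{-1}F=\sum a_n n^{-s}$ would be in $\mathcal{H}^2$ by the isometric isomorphism and Corollary~\ref{thm0402}, and this series is exactly $f$ (same coefficients), contradicting $f\notin\mathcal{H}^2$. So $F\notin\mathbf{H}_\infty^2$, and Theorem~\ref{thm0201}(2) gives $\mathcal{R}F\notin\mathbf{H}_\infty^p$ a.s.\ for every $1\leq p<\infty$. Since $\mathcal{B}^{*}(\mathcal{R}f)=\mathcal{R}F$, the Dirichlet series $\mathcal{R}f$ cannot lie in $\mathcal{H}^p$: if it did, Proposition~\ref{thm0401} would force its formal Bohr correspondence $\mathcal{R}F$ into $\mathbf{H}_\infty^p$, a contradiction. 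Hence $\mathcal{R}f\notin\mathcal{H}^p$ a.s.

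The main obstacle is not any deep estimate — the analytic content is entirely in Theorem~\ref{thm0201} — but rather the careful matching of the \emph{formal} randomization on the Dirichlet side with the genuine element of $\mathbf{H}_\infty^p$ on the polydisk side. Concretely, one must check that ``$\mathcal{R}f\in\mathcal{H}^p$'' is equivalent to ``$\mathcal{R}F\in\mathbf{H}_\infty^p$ and its monomial expansion has coefficients $\{a_nX_n\}$'', which is where Proposition~\ref{thm0401} (the coefficient sequence of $\mathcal{B}f$ is $\{a_n\}$) and Corollary~\ref{thm0402} (the inverse Bohr correspondence of $F\in\mathbf{H}_\infty^p$ is the Dirichlet series with the same coefficients) do the work. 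One should also note the hypothesis ``$\mathcal{B}^{*}f$ analytic on $\mathbb{D}_1^\infty$'' is exactly what is needed to invoke Theorem~\ref{thm0201}, and in case (1) this hypothesis is automatic once $f\in\mathcal{H}^2$ since then $\mathcal{B}^{*}f=\mathcal{B}f\in\mathbf{H}_\infty^2$ is already analytic. I would state the proof in essentially these three short paragraphs, citing Theorem~\ref{thm0201}, Proposition~\ref{thm0401}, and Corollary~\ref{thm0402}.
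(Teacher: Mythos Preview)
Your proof is correct and follows essentially the same route as the paper: transfer Theorem~\ref{thm0201} through the Bohr correspondence, using Proposition~\ref{thm0401} and Corollary~\ref{thm0402} to match coefficient sequences on the two sides. The only cosmetic difference is that for part~(2) the paper argues by contradiction at the outset (assuming ``$\mathcal{R}f\notin\mathcal{H}^p$ a.s.'' fails and deducing $f\in\mathcal{H}^2$), whereas you first show $\mathcal{B}^{*}f\notin\mathbf{H}_\infty^2$, apply Theorem~\ref{thm0201}(2), and then pull the a.s.\ conclusion back via Proposition~\ref{thm0401}; the logical content is identical.
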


\begin{proof}
(1) If $f\in \mathcal{H}^2$, then $\mathcal{B}f\in\mathbf{H}_\infty^2$.
Hence by Theorem \ref{thm0201}, $\mathcal{RB}f\in \mathbf{H}_\infty^p$ a.s..
Now it follows from Corollary \ref{thm0402} that $\mathcal{R}f=\mathcal{B}^{-1}\mathcal{RB}f\in\mathcal{H}^p$ a.s..

(2) By Proposition \ref{thm0401} and Corollary \ref{thm0402}, if $\mathcal{R}f\in \mathcal{H}^p$, then $\mathcal{RB}^{*}f=\mathcal{BR}f\in\mathbf{H}_\infty^p$.
Therefore, if ``$\mathcal{R}f\notin \mathcal{H}^p$ a.s." is false, we conclude from Theorem \ref{thm0201} that $\mathcal{B}^{*}f\in\mathbf{H}_\infty^2$, which implies that $f\in\mathcal{H}^2$, a contradiction.
\end{proof}

\begin{thm}
Let $1\leq p<\infty$ and $\bm{X}$ be a centered Gaussian process.

(1) If the covariance matrix $\mathbf{K}$ of $\bm{X}$ is bounded on $\ell^2$, then $\mathcal{R}$ defines a bounded linear operator from $\mathcal{H}^2$ to $L^2(\Omega,\mathcal{H}^p)$.
In particular, if $f\in\mathcal{H}^2$, then $\mathcal{R}f\in\mathcal{H}^p$ a.s..

(2) If for all $f\in\mathcal{H}^2$, $\mathcal{R}f\in\mathcal{H}^p$ a.s., then $\mathcal{R}$ defines a bounded linear operator from $\mathcal{H}^2$ to $L^2(\Omega,\mathcal{H}^p)$.
\end{thm}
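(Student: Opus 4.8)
The plan is to transport everything to the infinite-variable setting via the Bohr correspondence $\mathcal{B}$, which by hypothesis is an isometric isomorphism from $\mathcal{H}^p$ onto $\mathbf{H}_\infty^p$ for every $1\le p<\infty$, and then invoke Theorem \ref{thm0301} and Theorem \ref{thm0303}. The key observation, already recorded in Proposition \ref{thm0401} and Corollary \ref{thm0402}, is that $\mathcal{B}$ intertwines the randomizations: if $f(s)=\sum_n a_n n^{-s}\in\mathcal{H}^2$ then $\mathcal{B}f=\sum_n a_n\zeta^{\alpha(n)}\in\mathbf{H}_\infty^2$ has the same coefficient sequence, and the randomization $\mathcal{R}$ acts coefficientwise in both pictures, so $\mathcal{B}(\mathcal{R}f)=\mathcal{R}(\mathcal{B}f)$ as formal objects; one only needs to check this identity makes sense, i.e. that $\mathcal{R}(\mathcal{B}f)\in\mathbf{H}_\infty^p$ forces $\mathcal{R}f\in\mathcal{H}^p$ with equal norms, which is exactly Corollary \ref{thm0402}.

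First I would prove (1). Assuming $\mathbf{K}$ is bounded on $\ell^2$, apply Theorem \ref{thm0301} to get that $\mathcal{R}$ is a bounded linear operator from $\mathbf{H}_\infty^2$ to $L^2(\Omega,\mathbf{H}_\infty^p)$, with some operator norm $C$. Then for $f\in\mathcal{H}^2$ set $F=\mathcal{B}f\in\mathbf{H}_\infty^2$ with $\|F\|_2=\|f\|_{\mathcal{H}^2}$. By Theorem \ref{thm0301}, $\mathcal{R}F\in\mathbf{H}_\infty^p$ a.s., and by Corollary \ref{thm0402} (together with Proposition \ref{thm0401}) one has $\mathcal{B}^{-1}(\mathcal{R}F)=\mathcal{R}f$, so $\mathcal{R}f\in\mathcal{H}^p$ a.s.. Moreover, since $\mathcal{B}$ is an isometry, $\|\mathcal{R}f\|_{\mathcal{H}^p}=\|\mathcal{R}F\|_{\mathbf{H}_\infty^p}$ pointwise in $\Omega$, whence
$$\|\mathcal{R}f\|_{L^2(\Omega,\mathcal{H}^p)}=\|\mathcal{R}F\|_{L^2(\Omega,\mathbf{H}_\infty^p)}\le C\|F\|_2=C\|f\|_{\mathcal{H}^2},$$
so $\mathcal{R}\colon\mathcal{H}^2\to L^2(\Omega,\mathcal{H}^p)$ is bounded.

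For (2), assume $\mathcal{R}f\in\mathcal{H}^p$ a.s. for every $f\in\mathcal{H}^2$. Given $F\in\mathbf{H}_\infty^2$, write $f=\mathcal{B}^{-1}F\in\mathcal{H}^2$ (a Dirichlet series by the remarks preceding Proposition \ref{thm0401}); its formal Bohr correspondence is $F$ itself, analytic on $\mathbb{D}_1^\infty$. By hypothesis $\mathcal{R}f\in\mathcal{H}^p$ a.s., so by Proposition \ref{thm0401} and Corollary \ref{thm0402}, $\mathcal{R}F=\mathcal{B}(\mathcal{R}f)\in\mathbf{H}_\infty^p$ a.s.. Thus $\mathcal{R}F\in\mathbf{H}_\infty^p$ a.s. for every $F\in\mathbf{H}_\infty^2$, and Theorem \ref{thm0303} yields that $\mathcal{R}\colon\mathbf{H}_\infty^2\to L^2(\Omega,\mathbf{H}_\infty^p)$ is bounded; transporting back through the isometry $\mathcal{B}$ as in (1) gives boundedness of $\mathcal{R}\colon\mathcal{H}^2\to L^2(\Omega,\mathcal{H}^p)$.

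The only genuine point requiring care — and hence the main obstacle — is the compatibility of randomization with the Bohr correspondence at the level of $L^2(\Omega,\cdot)$, rather than merely almost surely: one must be sure that the isometric identification $\mathcal{B}$ of the fixed Banach spaces $\mathcal{H}^p$ and $\mathbf{H}_\infty^p$ induces an isometric identification of $L^2(\Omega,\mathcal{H}^p)$ with $L^2(\Omega,\mathbf{H}_\infty^p)$ that carries the random vector $\omega\mapsto(\mathcal{R}f)(\omega)$ to $\omega\mapsto(\mathcal{R}F)(\omega)$. This is routine once one checks measurability of these random vectors and that $\mathcal{B}^*f=\mathcal{B}f$ holds whenever $f\in\mathcal{H}^p$ (Proposition \ref{thm0401}); the randomized series has the same coefficients, so the identity $\mathcal{B}(\mathcal{R}f)=\mathcal{R}(\mathcal{B}f)$ follows from applying Proposition \ref{thm0401} to $\mathcal{R}f$ on the event where $\mathcal{R}f\in\mathcal{H}^p$. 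Everything else is a direct citation of the two infinite-variable theorems.
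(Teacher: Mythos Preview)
Your proposal is correct and follows essentially the same route as the paper: transport via the Bohr isometry $\mathcal{B}$, invoke Theorem \ref{thm0301} and Theorem \ref{thm0303} on the $\mathbf{H}_\infty^p$ side, and use Proposition \ref{thm0401} and Corollary \ref{thm0402} to identify $\mathcal{R}f$ with $\mathcal{B}^{-1}(\mathcal{R}\mathcal{B}f)$ and hence match the $L^2(\Omega,\cdot)$-norms. Your extra remark about the compatibility of $\mathcal{B}$ with the randomization at the level of $L^2(\Omega,\cdot)$ is a fair point of care, but the paper handles it exactly as you do, by noting that $\|\mathcal{R}f\|_{\mathcal{H}^p}=\|\mathcal{R}\mathcal{B}f\|_{\mathbf{H}_\infty^p}$ pointwise on the almost-sure event.
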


\begin{proof}
(1) If $f\in\mathcal{H}^2$, then $\mathcal{B}f\in\mathbf{H}_\infty^2$.
Hence by Theorem \ref{thm0301}, there is a constant $C>0$, does not depend on $f$, such that
\begin{equation}\label{equ403}
\|\mathcal{RB}f\|_{L^2(\Omega,\mathbf{H}_\infty^p)}\leq C\|\mathcal{B}f\|_2=C\|f\|_2.
\end{equation}
In particular, $\mathcal{RB}f\in\mathbf{H}_\infty^p$ a.s., which implies that $\mathcal{R}f\in\mathcal{H}^p$ a.s., and furthermore,
$$\|\mathcal{RB}f\|_{L^2(\Omega,\mathbf{H}_\infty^p)}=\left(\int_{\Omega}\|\mathcal{RB}f\|_p^2\mathrm{d}\mathbb{P}\right)^{\frac{1}{2}}
=\left(\int_{\Omega}\|\mathcal{R}f\|_p^2\mathrm{d}\mathbb{P}\right)^{\frac{1}{2}}=\|\mathcal{R}f\|_{L^2(\Omega,\mathcal{H}^p)}.$$
Combining this with (\ref{equ403}) shows that $\|\mathcal{R}f\|_{L^2(\Omega,\mathcal{H}^p)}\leq C\|f\|_2$, which completes the proof.

(2) For every $F\in \mathbf{H}_\infty^2$, we see that $\mathcal{B}^{-1}F\in\mathcal{H}^2$, and thus $\mathcal{R}\mathcal{B}^{-1}F\in\mathcal{H}^p$ a.s..
Hence by Proposition \ref{thm0401}, $\mathcal{R}F=\mathcal{BR}\mathcal{B}^{-1}F\in \mathbf{H}_\infty^p$ a.s..
Then it follows from Theorem \ref{thm0303} that there is a constant $C>0$ such that for every $f\in \mathcal{H}^2$,
$$\|\mathcal{RB}f\|_{L^2(\Omega,\mathbf{H}_\infty^p)}\leq C\|\mathcal{B}f\|_2=C\|f\|_2.$$
A same argument as in (1) yields that $\|\mathcal{R}f\|_{L^2(\Omega,\mathcal{H}^p)}\leq C\|f\|_2$, as desired.
\end{proof}

Since each $f\in\mathcal{H}^p$ is a Dirichlet series, $\mathcal{H}^p$ can be regarded as a linear space of complex sequences.
Hence the space of coefficient multipliers $(\mathcal{H}^2, \mathcal{H}^p)$ can be defined as in the previous section.

\begin{prop}
Let $1\leq p<\infty$ and $\bm{X}$ be a Gaussian process with $X_n\sim N(\mu_n,\sigma_n^2)$.
Then the following statements are equivalent:

(1) If $f\in\mathcal{H}^2$, then $\mathcal{R}_{\bm{X}}f\in\mathcal{H}^p$ a.s.;

(2) The mean value sequence $\{\mu_n\}_{n=1}^\infty\in(\mathcal{H}^2,\mathcal{H}^p)$, and if $f\in\mathcal{H}^2$, then $\mathcal{R}_{\bm{Y}}f\in\mathcal{H}^p$ a.s., where $\bm{Y}=\{X_n-\mu_n\}_{n=1}^\infty$.
\end{prop}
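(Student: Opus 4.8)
The plan is to deduce this proposition from its $\mathbf{H}_\infty^p$-counterpart, Proposition \ref{thm0305}, by transporting everything through the Bohr correspondence $\mathcal{B}:\mathcal{H}^p\to\mathbf{H}_\infty^p$. The key observation is that $\mathcal{B}$ is an isometric isomorphism which acts diagonally on coefficients: if $f=\sum a_n n^{-s}\in\mathcal{H}^2$ then $\mathcal{B}f=\sum a_n\zeta^{\alpha(n)}\in\mathbf{H}_\infty^2$ by Proposition \ref{thm0401}, and for any sequence $\bm{X}$ one has the intertwining relation $\mathcal{B}(\mathcal{R}_{\bm{X}}f)=\mathcal{R}_{\bm{X}}(\mathcal{B}f)$ at the level of coefficient sequences, together with $\mathcal{B}^{-1}(\mathcal{R}_{\bm{X}}F)=\mathcal{R}_{\bm{X}}(\mathcal{B}^{-1}F)$ via Corollary \ref{thm0402}. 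In particular, because $\mathcal{B}$ is an isometry, $\mathcal{R}_{\bm{X}}f\in\mathcal{H}^p$ a.s. if and only if $\mathcal{R}_{\bm{X}}(\mathcal{B}f)\in\mathbf{H}_\infty^p$ a.s., and similarly for $\bm{Y}$.

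First I would record the commutation identities carefully. Given $f(s)=\sum_{n=1}^\infty a_n n^{-s}\in\mathcal{H}^2$, Proposition \ref{thm0401} gives $\mathcal{B}f(\zeta)=\sum_{n=1}^\infty a_n\zeta^{\alpha(n)}\in\mathbf{H}_\infty^2$; so the formal Bohr image of $\mathcal{R}_{\bm{X}}f=\sum a_n X_n n^{-s}$ has coefficient sequence $\{a_n X_n\}$, which is exactly the coefficient sequence of $\mathcal{R}_{\bm{X}}(\mathcal{B}f)=\sum a_n X_n\zeta^{\alpha(n)}$. Hence, almost surely, $\mathcal{R}_{\bm{X}}f\in\mathcal{H}^p$ precisely when $\mathcal{B}^{*}(\mathcal{R}_{\bm{X}}f)$ defines an element of $\mathbf{H}_\infty^p$, i.e. when $\mathcal{R}_{\bm{X}}(\mathcal{B}f)\in\mathbf{H}_\infty^p$; the converse direction uses Corollary \ref{thm0402} to go back. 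The same reasoning applies verbatim with $\bm{X}$ replaced by $\bm{Y}=\{X_n-\mu_n\}$.

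Next I would show that the coefficient-multiplier condition transfers: $\{\mu_n\}\in(\mathcal{H}^2,\mathcal{H}^p)$ if and only if $\{\mu_n\}\in(\mathbf{H}_\infty^2,\mathbf{H}_\infty^p)$. This is immediate from the fact that $\mathcal{B}$ identifies $\mathcal{H}^2$ with $\mathbf{H}_\infty^2$ and $\mathcal{H}^p$ with $\mathbf{H}_\infty^p$ as spaces of complex sequences (the coefficient sequence of $f$ equals that of $\mathcal{B}f$, by Proposition \ref{thm0401}), so $\{\mu_n a_n\}$ represents an element of $\mathcal{H}^p$ exactly when it represents an element of $\mathbf{H}_\infty^p$. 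With these two translations in hand, the proposition follows directly by applying Proposition \ref{thm0305} to $\mathcal{B}f$ and $\mathcal{B}^{*}f$: statement (1) here is equivalent to statement (1) of Proposition \ref{thm0305} for the process $\bm{X}$ acting on $\mathbf{H}_\infty^2$, and statement (2) here is the Bohr transport of statement (2) there.

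I expect the only genuine subtlety to be a bookkeeping one: making sure that the formal Bohr correspondence $\mathcal{B}^{*}$ of a randomized Dirichlet series really does coincide with $\mathcal{B}$ of an honest $\mathcal{H}^p$-element whenever it should, which is handled by the hypothesis that $\mathcal{B}^{*}f$ is analytic on $\mathbb{D}_1^\infty$ (so that $\mathcal{R}_{\bm{X}}f$ has a well-defined formal Bohr image whose analyticity one can discuss) together with Proposition \ref{thm0401} and Corollary \ref{thm0402}. Once the dictionary $f\leftrightarrow\mathcal{B}f$, $\mathcal{R}_{\bm{X}}f\leftrightarrow\mathcal{R}_{\bm{X}}(\mathcal{B}f)$, $(\mathcal{H}^2,\mathcal{H}^p)\leftrightarrow(\mathbf{H}_\infty^2,\mathbf{H}_\infty^p)$ is set up, the equivalence $(1)\Leftrightarrow(2)$ is a formal consequence of Proposition \ref{thm0305}, so there is no hard analytic obstacle here — the work is entirely in verifying that the Bohr correspondence commutes with randomization and preserves the relevant membership and multiplier properties.
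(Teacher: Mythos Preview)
Your approach is correct and is exactly what the paper has in mind: the paper gives no proof of this proposition at all, presumably because it follows immediately from Proposition~\ref{thm0305} by transporting through the Bohr correspondence, in the same way Theorem~\ref{thm0403} and the theorem after it are deduced from their $\mathbf{H}_\infty^p$ counterparts.

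One small remark: you invoke ``the hypothesis that $\mathcal{B}^{*}f$ is analytic on $\mathbb{D}_1^\infty$'', but that hypothesis belongs to Theorem~\ref{thm0403}, not to the present proposition. Here the standing assumption is $f\in\mathcal{H}^2$, which already gives $\mathcal{B}f\in\mathbf{H}_\infty^2$ by Proposition~\ref{thm0401}, so analyticity on $\mathbb{D}_1^\infty$ is automatic and no extra hypothesis is needed. With that correction, your dictionary $f\leftrightarrow\mathcal{B}f$, $\mathcal{R}_{\bm{X}}f\leftrightarrow\mathcal{R}_{\bm{X}}(\mathcal{B}f)$, $(\mathcal{H}^2,\mathcal{H}^p)\leftrightarrow(\mathbf{H}_\infty^2,\mathbf{H}_\infty^p)$ is set up correctly and the equivalence follows formally from Proposition~\ref{thm0305}.
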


\vskip2mm

\noindent\textbf{Acknowledgements:}\quad This work was partially supported by the Fundamental Research Funds for the Central Universities (2412023QD002).

\vskip3mm \noindent{Jiaqi Ni, School of Mathematics and Statistics, Northeast Normal University,
Changchun, 130024, China, E-mail: nijq849@nenu.edu.cn}


\begin{thebibliography}{wide-label}
{\footnotesize

\bibitem[ACP]{ACP}
J.~M.~Anderson, J.~Clunie and C.~Pommerenke, \emph{On Bloch functions and normal functions}, J. Reine Angew. Math. \textbf{270}(1974), 12-37.

\bibitem[AOS]{AOS}
A.~Aleman, J.~Olsen, and E.~Saksman, \emph{Fatou and brothers Riesz theorems in the infinite-dimensional polydisc}, J. Anal. Math. \textbf{137}(2019), 429-447.

\bibitem[Bay]{Bay}
F.~Bayart, \emph{Hardy spaces of Dirichlet series and their composition operators}, Monatsh. Math. \textbf{136}(2002), 203-236.

\bibitem[Bes]{Bes}
A.~Besicovitch, \emph{Almost periodic functions}, Dover Publications, Inc. New York, 1954.

\bibitem[Bi]{Bi}
P.~Billard, \emph{S\'{e}ries de Fourier al\'{e}atoirement born\'{e}es, continues, uniform\'{e}ment convergentes}, Ann. Sci. \'{E}cole Norm. Sup. \textbf{82}(1965), 131-179.

\bibitem[Boh]{Boh}
H.~Bohr, \emph{$\ddot{U}$ber die Bedeutung der Potenzreihen unendlich vieler Variabeln in der Theorie der Dirichletschen Reien $\sum a_n/n^s$}, Nachr. Ges. Wiss. G$\mathrm{\ddot{o}}$ttingen. Math.-Phys. Kl. \textbf{A9}(1913), 441-488.

\bibitem[BBSS]{BBSS}
A.~Bondarenko, O.~Brevig, E.~Saksman and K.~Seip, \emph{Linear space properties of $H^p$ spaces of Dirichlet series}, Trans. Amer. Math. Soc. \textbf{372}(2019), 6677-6702.

\bibitem[BDFMS]{BDFMS}
F.~Bayart, A.~Defant, L.~Frerick, M.~Maestre and P.~Sevilla-Peris, \emph{Multipliers of Dirichlet series and monomial series expansions of holomorphic functions in infinitely many variables}, Math. Ann. \textbf{368}(2017), 837-876.

\bibitem[Chr]{Chr}
O.~Christensen, \emph{An introduction to frames and Riesz bases}, 2nd ed. Applied and Numerical Harmonic Analysis. Birkh\"{a}user/Springer, 2016.

\bibitem[CFGL]{CFGL}
G.~Cheng, X.~Fang, K.~Guo and C.~Liu, \emph{A Gaussian version of Littlewood's theorem for random power series}, Proc. Amer. Math. Soc. \textbf{150}(2022), 3525-3536.

\bibitem[CFL]{CFL}
G.~Cheng, X.~Fang and C.~Liu, \emph{A Littlewood-type theorem for random Bergman functions}, Int. Math. Res. Not. IMRN, \textbf{14}(2022), 1-36(11056-11091 on table of contents).

\bibitem[CG]{CG}
B.~Cole and T.~Gamelin, \emph{Representing measures and Hardy spaces for the infinite polydisk algebra}, Proc. London Math. Soc. (3). \textbf{53}(1986), 112-142.

\bibitem[Din]{Din}
S.~Dineen, \emph{Complex analysis in locally convex spaces}, North-Holland Mathematics Studies, 57, Amsterdam-New York, 1981.

\bibitem[Dur]{Dur}
P.~Duren, \emph{Random series and bounded mean oscillation}, Michigan Math. J. \textbf{32}(1985), 81-86.

\bibitem[DG]{DG}
H.~Dan and K.~Guo, \emph{The periodic dilation completeness problem: cyclic vectors in the Hardy space over the infinite-dimensional polydisk}, J. Lond. Math. Soc. (2). \textbf{103}(2021), 1-34.

\bibitem[DGH]{DGH}
H.~Dan, K.~Guo and H.~Huang, \emph{Submodules of the Hardy module in infinitely many variables}, J. Operator Theory, \textbf{80}(2018), 375-397.

\bibitem[DGMS]{DGMS}
A.~Defant, D.~Garcia, M.~Maestre and P.~Sevilla-Peris, \emph{Dirichlet series and holomorphic functions in high dimensions}, New Mathematical Monographs, 37, Cambridge University Press, Cambridge, 2019.

\bibitem[DMP]{DMP}
A.~Defant, M.~Maestre and C.~Prengel, \emph{Domains of convergence for monomial expansions of holomorphic functions in infinitely many variables}, J. Reine Angew. Math. \textbf{634}(2009), 13-49.

\bibitem[Gao]{Gao}
F.~Gao, \emph{A characterization of random Bloch functions}, J. Math. Anal. Appl. \textbf{252}(2000), 959-966.

\bibitem[GY]{GY}
K.~Guo and F.~Yan, \emph{Toeplitz operators on the Hardy space over the infinite-dimensional polydisc}, Acta Sci. Math. (Szeged), \textbf{88}(2022), 223-262.

\bibitem[JJ]{JJ}
M.~Jevti\'{c} and I.~Jovanovi\'{c}, \emph{Coefficient multipliers of mixed norm spaces}, Canad. Math. Bull. \textbf{36}(1993), 283-285.

\bibitem[JVA]{JVA}
M.~Jevti\'{c}, D.~Vukoti\'{c} and M.~Arsenovi\'{c}, \emph{Taylor coefficients and coefficient multipliers of Hardy and Bergman-type spaces}, RSME Springer Series, 2. Springer, Cham, 2016.

\bibitem[Kah]{Kah}
J.-P.~Kahane, \emph{Some random series of functions}, Vol. 5, 2nd ed. Cambridge Stud. Adv. Math. Cambridge University Press, Cambridge, 1985.

\bibitem[KQSS]{KQSS}
S.~Konyagin, H.~Queff\'{e}lec, E.~Saksman and K.~Seip, \emph{Riesz projection and bounded mean oscillation for Dirichlet series}, Studia Math. \textbf{262}(2022), 121-149.

\bibitem[Le]{Le}
J.-F.~Le Gall, \emph{Brownian motion, martingales, and stochastic calculus}, Translated from the 2013 French edition, Springer, 2016.

\bibitem[Li1]{Li1}
J.~E.~Littlewood, \emph{On Mean Values of Power Series}, Proc. London Math. Soc. \textbf{25}(1926), 328-337.

\bibitem[Li2]{Li2}
J.~E.~Littlewood, \emph{Mathematical Notes (13): On Mean Values of Power Series (II)}, J. London Math. Soc. \textbf{5}(1930), 179-182.

\bibitem[LQ]{LQ}
D.~Li and H.~Queff\'{e}lec, \emph{Introduction to Banach spaces: analysis and probability. Vol. 2.}, Cambridge Studies in Advanced Mathematics, 167. Cambridge University Press, Cambridge, 2018.

\bibitem[LT]{LT}
M.~Ledoux and M.~Talagrand, \emph{Probability in Banach spaces: Isoperimetry and processes}, Reprint of the 1991 edition, Classics in Mathematics, Springer-Verlag, Berlin, 2011.

\bibitem[MP1]{MP}
M.~B.~Marcus and G.~Pisier, \emph{Necessary and sufficient conditions for the uniform convergence of random trigonometric series}, Vol. 50. Lecture Notes Series. Matematisk Institut, Aarhus Universitet, Aarhus, 1978.

\bibitem[MP2]{MP2}
M.~B.~Marcus and G.~Pisier, \emph{Random Fourier series with applications to harmonic analysis}, Vol. 101, Annals of Mathematics Studies, Princeton, Tokyo, 1981.

\bibitem[Ni]{Ni}
N.~Nikolski, \emph{In a shadow of the RH: cyclic vectors of Hardy spaces on the Hilbert multidisc}, Ann. Inst. Fourier (Grenoble), \textbf{62}(2012), 1601-1626.

\bibitem[PWZ]{PWZ}
R.~E.~A.~C.~Paley, N.~Wiener and A.~Zygmund, \emph{Notes on random functions}, Math. Z. \textbf{37}(1933), 647-668.

\bibitem[PZ]{PZ}
R.~E.~A.~C.~Paley and A.~Zygmund, \emph{On some series of functions, (1)}, Math. Proc. Cambridge Philos. Soc. \textbf{26}(1930): 337-357.

\bibitem[QQ]{QQ}
H.~Queff\'{e}lec and M.~Queff$\acute{\mathrm{e}}$lec, \emph{Diophantine approximation and Dirichlet series}, 2nd ed. Texts and Readings in Mathematics, 80, Hindustan Book Agency, New Delhi, Springer, Singapore, 2020.

\bibitem[Sch]{Sch}
V.~Scheidemann, \emph{Introduction to complex analysis in several variables}, Birkh\"{a}user Verlag, Basel, 2005.

\bibitem[Sle]{Sle}
W.~T.~Sledd, \emph{Random series which are BMO or Bloch}, Michigan Math. J. \textbf{28}(1981), 259-266.

\bibitem[SZ]{SZ}
R.~Salem and A.~Zygmund, \emph{Some properties of trigonometric series whose terms have random signs}, Acta Math. \textbf{91}(1954), 245-301.

}
\end{thebibliography}
\end{document}